\numberwithin{figure}{section}
\numberwithin{table}{section}
\def\E{\mathbb{E}}
\newcommand{\bequ}{\begin{equation}}     \newcommand{\eequ}{\end{equation}}
\newcommand{\benn}{\begin{equation*}}    \newcommand{\eenn}{\end{equation*}}
\newcommand{\bbma}{\begin{bmatrix}}      \newcommand{\ebma}{\end{bmatrix}}
\newcommand{\var}{\mbox{Var}}
\newcommand{\R}{\mathbb{R}}
\newcommand{\bsub}{\begin{subequations}}
\newcommand{\esub}{\end{subequations}}
\newtheorem{thm}{Theorem}[section]
\newtheorem{lem}[thm]{Lemma}
\numberwithin{equation}{section}
\newcommand{\comment}[1]{}
\newcommand{\be}{\begin{equation}}
\newcommand{\ee}{\end{equation}}
\newcommand{\bea}{\begin{eqnarray}}
\newcommand{\eea}{\end{eqnarray}}
\newcommand{\beqa}{\begin{eqnarray}}
\newcommand{\eeqa}{\end{eqnarray}}
\newcommand{\beann}{\begin{eqnarray*}}
\newcommand{\eeann}{\end{eqnarray*}}
\newcommand{\bmat}{\left[ \begin{array}}
\newcommand{\emat}{\end{array} \right]}
\newcommand{\beq}{\begin{equation}}
\newcommand{\eeq}{\end{equation}}
\newcommand{\bproof}{\begin{description} \item[{\it Proof}.] ~ }
\newcommand{\eproof}{\hspace*{\fill}$\Box$\medskip \end{description}}
\newcommand{\defeq}{\stackrel{\rm def}{=}}
\newcounter{algo}[section]
\newcounter{prog}[section]
\title{Adaptive Sampling Strategies for Stochastic Optimization}
\author{       
        Raghu Bollapragada\thanks{Department of Industrial Engineering and Management Sciences, Northwestern University, 
       Evanston, IL, USA.  This author was supported by the Office of Naval Research grant N00014-14-1-0313 P00003.}
       \and     
       Richard Byrd \thanks{Department of Computer Science, University of Colorado,
        Boulder, CO, USA.  This author was supported by National Science Foundation Grant DMS-1620070.}
        \and
       Jorge Nocedal \thanks{Department of Industrial Engineering and Management Sciences, Northwestern University, 
       Evanston, IL, USA.  This author was supported by National Science Foundation grant DMS-0810213, and by Department 
       of Energy grant DE-FG02-87ER25047.} 
      }
\date{\today}
\begin{document}

\maketitle

\maketitle

\begin{abstract}
	In this paper, we propose a stochastic optimization  method that adaptively controls the sample size used in the computation of gradient approximations.  Unlike other variance reduction techniques that either require additional storage or the regular computation of full gradients, the proposed method reduces variance by increasing the sample size as needed. The decision to increase the sample size is governed by an \emph{inner product test} that
ensures that search directions are descent directions with high probability.
	 We show that the inner product test improves upon the well known \emph{norm test}, and can be used as a basis for an algorithm that is globally convergent on nonconvex functions and enjoys a global linear rate of convergence on strongly convex functions. Numerical experiments on logistic regression problems illustrate the performance of the  algorithm. 
\end{abstract}

\newpage
\newpage
\section{Introduction}
\label{intro}
\setcounter{equation}{0}

This paper presents a first-order stochastic optimization method 
that progressively changes the size of the sample used in the gradient approximation with the aim of achieving overall efficiency. The algorithm starts by choosing a small sample, and increases it as needed so that the gradient approximation is  accurate enough to yield  a  linear rate of convergence for strongly convex functions. Adaptive sampling methods of this type are appealing because they enjoy optimal complexity properties \cite{byrd2012sample,friedlander2012hybrid} and have the potential of being effective on a wide range of applications. Theoretical guidelines for controlling the sample size have been established in the literature \cite{byrd2012sample,friedlander2012hybrid,2014pasglyetal}, but the design of practical implementations has proven to be difficult. For example, the mechanism studied in \cite{byrd2012sample,2014hasghopasWSC,cartis2015global}, although intuitively appealing,  is often inefficient in practice for reasons discussed below.

The problem of interest is 
\[
\min_{x \in \R^d} \ \mathbb{E}[\phi(x;\xi)],
\]
where $\phi : \R^d \rightarrow R$ is a smooth function and $\xi$ is a random variable. A particular instance of this problem arises in machine learning, where it takes the form 
\begin{equation}\label{prob}
\min_{x \in \R^{d}}   F(x) = \int f(x; z,y) dP( z,y).
\end{equation}
In this setting,  $f$ is the composition of a prediction function (parametrized by a vector $x $) and a smooth loss function, and $(z,y)$ are random input-output pairs with probability distribution $P(z,y)$.  We call $F$  the \emph{expected risk}.

Often, problem \eqref{prob} cannot be tackled directly because  the joint probability distribution $P(z,y)$ is unknown. In this case, one draws a data set  $\{(z^i,y^i)\}$, $i=1,\ldots, N$,   from the distribution $P(z,y)$, and minimizes  the  \emph{empirical risk}  
\[
R(x) = \frac{1}{N} \sum_{i =1}^N f(x;z^i,y^i) .
\]
We define 
$
F_i(x) \defeq f(x;z^i,y^i),
$
so that the empirical risk can be written conveniently as
\begin{equation}\label{probrr}
R(x) = \frac{1}{N} \sum_{i =1}^N F_i(x).
\end{equation}     
One may view the optimization algorithm as being applied directly to the expected risk $F$ or to the empirical risk $R$. We state our algorithm and establish a convergence result in terms of the minimization of $F$. Later on, in Section~\ref{practical}, we discuss a practical implementation designed to minimize $R$.

An approximation to the gradient of $F$ can be obtained by sampling. At the iterate $x_k$, we define 
\begin{equation}   \label{subs}
\nabla F_{S_k}(x_k)=\frac{1}{|S_k|}\sum_{i \in S_k} \nabla F_i(x_k),
\end{equation}
where the set $ S_k \subset\{1, 2, \ldots \}$ indexes certain data points $ (z^i,y^i)$. A first-order method based on this gradient approximation is then given by
\begin{equation}   \label{searchd}
x_{k+1}=x_k - \alpha_k \nabla F_{S_k}(x_k), \quad\ \alpha_k >0.
\end{equation}
In our approach, the sample  $S_k$ changes at every iteration, and its size $|S_k|$ is determined by a mechanism described in the next section. It is based on an \emph{inner product test} that ensures that the search direction in \eqref{searchd} is a descent direction with high probability.   
In contrast to the test studied in [6-8, 14], which we call \emph{the norm test}, and which  controls both the direction
 and length of the  gradient approximation and promotes  search directions that are close to the true gradient, the inner product test places more emphasis on generating  descent directions and allows more freedom in their length. 

The numerical results presented in Section~\ref{numerical} suggest that the inner product test is efficient in practice, but in order to establish a Q-linear convergence rate for strongly convex functions, we must reinforce it with an additional mechanism that prevents search directions from becoming nearly orthogonal to the true gradient $\nabla F(x_k)$. More precisely, we introduce an  \emph{orthogonality test} that ensures that the variance of sampled gradients along the direction orthogonal to $\nabla F(x_k)$ is properly controlled. The orthogonality test is invoked infrequently in practice and should be regarded as a safeguard against rare difficult cases. 

An important component of Algorithm \eqref{searchd} is the selection of the steplength  $\alpha_k$.  One option   is to use a fixed value $\alpha$ that is selected for each problem after careful experimentation. An alternative that we explore in more depth is a backtracking line search that imposes sufficient decrease in the sampled function
 \begin{equation}    \label{sfunc}
F_{S_k}(x)=\frac{1}{|S_k|}\sum_{i \in S_k}  F_i(x),
\end{equation}
and that is controlled by an adaptive estimate of the Lipschitz constant $L$ of the gradient. 
Similar strategies have been considered for deterministic problems (see e.g. \cite{beck2009fast}), but the stochastic setting provides some challenges and  opportunities that we explore in our line search procedure.

This paper is organized into five sections. A literature review and a summary of our notation are presented in the rest of this section. In Section~\ref{sec:ip}, we describe the inner product test, and in Section~\ref{sec:or} we introduce the orthogonality test and establish convergence analysis of an adaptive sampling algorithm that employs both tests. In Section~\ref{practical}, we discuss some practical implementation issues, and present a full description of the algorithm.  Numerical results are presented in Section~\ref{numerical}, and in Section~\ref{sec:final} we make some concluding remarks.

\subsection{Literature Review}
Optimization methods that progressively increase  sample sizes have been studied in \cite{homem2003variable,royset2013optimal,friedlander2012hybrid,byrd2012sample,2014pasglyetal,roosta2016sub,roosta2016sub1,de2017automated}.  Friedlander and Schmidt \cite{friedlander2012hybrid} consider the finite sum problem \eqref{probrr} and show linear convergence  by increasing  $|S_k|$ at a geometric rate. They also experiment with a quasi-Newton version of their algorithm. Byrd et al. \cite{byrd2012sample} study the minimization of expected risk \eqref{prob} and show linear convergence when the sample size grows geometrically, and provide computational complexity bounds. They propose the \emph{norm test} as a practical procedure for controlling the sample size.
Pasupathy et al. \cite{2014pasglyetal} study more generally the effect of sampling rates on the convergence and complexity of various optimization methods. Hashemi et al. \cite{2014hasghopasWSC} consider a test that is similar to the norm test, which is reinforced by a back up mechanism that ensures a geometric increase in the sample size. They motivate this approach from a stochastic simulation perspective and using variance-bias ratios. Cartis and Scheinberg  \cite{cartis2015global} relax the norm test by allowing it to be violated with a probability less than 0.5, and this ensures that the search directions are successful descent directions  more than $50 \%$ of the time. They use techniques from stochastic processes and analyze algorithms that perform a line search using the true function values $F(x_k)$. Bollapragada et al. \cite{bollapragada2016exact} study methods that sample the gradient and Hessian, and establish conditions for global linear convergence. They also provide a superlinear convergence result in the case when the gradient samples are increased at rates faster than geometric and Hessian samples are increased without bound (at any rate).

The adaptive sampling methods studied here can be regarded as variance reducing methods; see the survey \cite{bottou2016optimization}.  Other noise reducing methods include  stochastic aggregated gradient methods, such as SAG \cite{schmidt2013minimizing}, SAGA  \cite{defazio2014saga}, and SVRG \cite{johnson2013accelerating}. These methods either compute the full gradient at regular intervals, as in  SVRG,  or require storage of the component gradients, as in SAG or SAGA. These methods have gained much popularity in recent years, as they are able to achieve a linear rate of convergence for the finite sum problem, with a very low iteration cost.

\subsection{Notation}
We denote the variables of the optimization problem by $x \in \R^d$, and a minimizer of the objective $F$ as $x^*$. Throughout the paper,   $\| \cdot \|$ denotes the $\ell_2$ vector norm. The notation $A \preceq B$ means that $B-A$ is a symmetric and positive semi-definite matrix. 

\section{The Inner Product Test}  \label{sec:ip}
Let us consider how to select the sample size in the first-order stochastic optimization method
\begin{equation} \label{iter}
x_{k+1} = x_k - \alpha_k \nabla F_{S_k}(x_k) .
\end{equation}
Here $\alpha_k >0$ is the steplength parameter and  the sampled gradient $\nabla F_{S_k}(x_k)$ is defined in \eqref{subs}. We propose to determine the sample size $|S_k|$ at every iteration through the \emph{inner product test}  described below,  which  aims to ensure that the algorithm generates descent directions sufficiently often. 
We recall that the search direction  of algorithm \eqref{iter} is a descent direction for $F$ if
\[
\nabla F_{S_k}(x_k)^T \nabla F(x_k) > 0.
\] 
This condition will not hold at every iteration of our algorithm, but if $S_k$ is chosen uniformly at random from $\{1, 2, \ldots \}$, it will hold in expectation,  i.e.,
\begin{equation}  \label{recall}
\E\left[\nabla F_{S_k}(x_k)^T\nabla F(x_k)\right] = \|\nabla F(x_k)\|^2 > 0 .
\end{equation}
We must in addition control the variance of the term on the left hand side to guarantee that the iteration \eqref{iter} is convergent. We do so by requiring that the sample size $|S_k|$ be large enough so that the following  condition is satisfied
\begin{equation} \label{variancet}
\E \left[\left( \nabla F_{S_k}(x_k)^T\nabla F(x_k) - \|\nabla F(x_k)\|^2 \right)^2\right] \leq \theta^2 \|\nabla F(x_k)\|^4, \quad \mbox{for some} \quad \theta > 0 . 
\end{equation} 
The  left hand side is difficult to compute but can be bounded by the true variance of individual gradients, i.e.,
\begin{equation} \label{inner}
\E \left[\left( \nabla F_{S_k}(x_k)^T\nabla F(x_k) - \|\nabla F(x_k)\|^2 \right)^2\right] \leq \frac{\E \left[\left( \nabla F_{i}
	(x_k)^T\nabla F(x_k) - \|\nabla F(x_k)\|^2 \right)^2\right]}{|S_k|}.
\end{equation} 
Therefore, the following condition ensures   \eqref{variancet} 
\begin{equation} \label{inner-i}
\frac{\E \left[\left( \nabla F_{i}(x_k)^T\nabla F(x_k) - \|\nabla F(x_k)\|^2 \right)^2\right]}{|S_k|} \leq \theta^2 \|\nabla F(x_k)\|^4.
\end{equation}	
We refer to \eqref{inner-i} as the \emph{(exact variance) inner product test.}
In large-scale applications, the computation of  $\nabla F(x_k)$ can be prohibitively expensive, but we can approximate the variance on the left side of \eqref{inner-i} with the sample variance and the  gradient $\nabla F(x_k)$ on the right side with a sampled gradient, to obtain
	\begin{equation} \label{final}
  \frac{{\rm Var}_{i \in S_k}(\nabla F_i(x_k)^T\nabla F_{S_k}(x_k))}{|S_k|} \leq \theta^2 \|\nabla F_{S_k}(x_k)\|^4 ,
	\end{equation}
where
\[
{\rm Var}_{i \in S_k}\left(\nabla F_i(x_k)^T\nabla F_{S_k}(x_k)\right) = \frac{1}{|S_k| -1}\sum_{i \in S_k}\left(\nabla F_i(x_k)^T\nabla F_{S_k}(x_k) - \|\nabla F_{S_k}(x_k)\|^2\right)^2.
\]
Condition \eqref{final} will be called the \emph{(approximate) inner product test}. Whenever it is not satisfied, we increase the sample size $|S_k|$ to one that we predict will satisfy \eqref{final}. An outline of this approach is given in Algorithm~\ref{alg:Adasample}.

\begin{algorithm}[htp] 
	\caption{Basic Version}
	\label{alg:Adasample}
	{\bf Input:} Initial iterate $x_0$, initial sample  $S_0$, and a constant $\theta > 0$.\\
	Set $k \leftarrow 0$\\
	{\bf Repeat} until a convergence test is satisfied:
	\begin{algorithmic}[1]
		\State Compute $d_k = -\nabla F_{S_k}(x_k)$
		\State Choose a steplength $\alpha_k > 0$
		\State Compute new iterate: $x_{k+1} = x_k + \alpha_k d_k$
		\State Set $k \leftarrow k + 1$
		\State Choose a new sample $S_k$ such that the condition \eqref{final} is satisfied
	\end{algorithmic}
\end{algorithm}

\medskip\noindent
In Section~4, we discuss how to implement the inner product test in practice,  how to choose the parameter $\theta$ and the stepsize $\alpha_k$, as well as the strategy for increasing the size of a new sample $S_k$, when the algorithm calls for it.  

It is illuminating to compare the inner product test\footnote{We use the term \emph{inner product test} to refer to \eqref{inner-i} or \eqref{final} when the distinction is not important in the discussion.} with a related rule studied in the literature  
\cite{byrd2012sample,2014hasghopasWSC,cartis2015global} that we call the \emph{norm test.}  The  comparison can be most simply and clearly seen in  the deterministic setting with the gradient based method  $x_{k+1}= x_k - \alpha_k g_k$, where $g_k$ is some approximation to the gradient $\nabla F(x_k)$. In this context,  the deterministic analog of \eqref{variancet} is
\begin{equation} \label{varianced}
\left( g_k^T\nabla F(x_k) - \|\nabla F(x_k)\|^2 \right)^2 \leq \theta^2 \|\nabla F(x_k)\|^4 .
\end{equation} 
In contrast, the norm test corresponds to 
\begin{equation} \label {carter}
\|g_k - \nabla F(x_k)\|^2 \leq \theta^2 \|\nabla F(x_k)\|^2, \qquad \mbox{for some} \quad \theta \in [0,1).
\end{equation}
This rule was studied by Carter \cite{carter} in the context of trust region methods with inaccurate gradients.
It is easy to see that \eqref{carter} ensures that  $g_k$ is a descent direction, but it is not a necessary condition;  in fact \eqref{carter} is more restrictive than  \eqref{varianced} because it requires approximate gradients  to lie in a  ball centered at the true gradient $\nabla F(x_k)$, whereas \eqref{varianced} allows gradients that are within an infinite band  around the true gradient, as illustrated in Figure~\ref{fig:Angle tests}. 

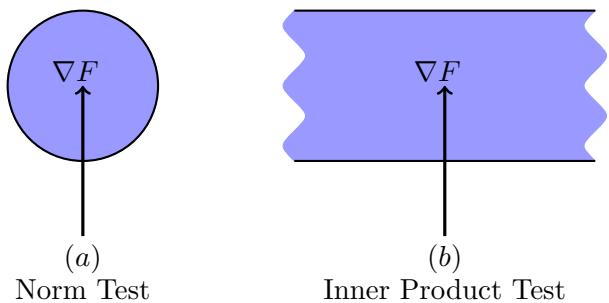
\begin{figure}[H]
	\begin{tikzpicture}[
	scale=2,
	axis/.style={very thick, ->, >=stealth'},
	important line/.style={thick},
	dashed line/.style={dashed, thin},
	pile/.style={thick, ->, >=stealth', shorten <=2pt, shorten
		>=2pt},
	every node/.style={color=black}
	]  
	\hspace{3.5cm}
	\filldraw[fill=blue!40!white, draw=blue!40!white](0,1) circle (0.5cm);
	\draw[thick] (0,1) circle (0.5cm);
	\draw[very thick,->] (0,0) coordinate (A) -- (0,1)
	coordinate (B) node[right, text width=5em] {};
	\fill[white] (-0.05,1.1) circle (0.0000003pt) node[] [black]{$ \nabla F$};
	\fill[white] (0,-0.15) circle (0.0000003pt) node[] [black]{$(a)$};
	\fill[white] (0,-0.35) circle (0.0000003pt) node[] [black]{Norm Test};
	\begin{scope}[xshift=40]
	\filldraw[fill=blue!40!white, draw=blue!40!white] (2,0.5) -- (0,0.5) .. controls (-0.1, 0.6) .. (0.0, 0.7) .. controls  (0.1, 0.8) .. (0.0, 0.9) .. controls  (-0.1, 1.0) .. (0.0, 1.1) .. controls  (0.1, 1.2) .. (0.0, 1.3) .. controls  (-0.1, 1.4) .. (0.0, 1.5) -- (2, 1.5) .. controls (1.9, 1.4) .. (2, 1.3) .. controls (2.1, 1.2) .. (2, 1.1) .. controls (1.9, 1.0) .. (2, 0.9) .. controls (2.1, 0.8) .. (2, 0.7) .. controls (1.9, 0.6) .. (2, 0.5);
	\draw[thick] (0,0.5) coordinate (A) -- (2,0.5)
	coordinate (B) node[right, text width=5em] {};
	\draw[thick] (0,1.5) coordinate (A) -- (2,1.5)
	coordinate (B) node[right, text width=5em] {};
	\draw[very thick,->] (1,0) coordinate (A) -- (1,1)
	coordinate (B) node[right, text width=5em] {};
	\fill[white] (0.95,1.1) circle (0.0000003pt) node[] [black]{$ \nabla F$};    
	\fill[white] (1,-0.15) circle (0.0000003pt) node[] [black]{$(b)$};
	\fill[white] (1,-0.35) circle (0.0000003pt) node[] [black]{Inner Product Test};
	\end{scope}
	\end{tikzpicture}
	\vspace{0.1 cm}
	\caption{Deterministic setting. Given a gradient $\nabla F$, the shaded areas denote the set of vectors $g$ satisfying (a): the norm condition \eqref{carter}; (b) the deterministic inner product condition \eqref{varianced}.}
	\label{fig:Angle tests}
\end{figure}

In the stochastic setting, the norm condition \eqref{carter} becomes
\begin{equation} \label{normt}
  {\E[\|\nabla F_{S_k}(x_k) - \nabla F(x_k)\|^2]} \leq \theta^2 \|\nabla F(x_k)\|^2.
\end{equation}
Following the same reasoning as in \eqref{inner},  this condition will be satisfied if we impose instead
\begin{equation} \label{dss-sample-test}
\frac{\E[\|\nabla F_{i}(x_k) - \nabla F(x_k)\|^2]}{|S_k|} \leq \theta^2 \|\nabla F(x_k)\|^2.
\end{equation}
This \emph{norm test} is used in \cite{byrd2012sample} to control the sample size: if \eqref{dss-sample-test} is not satisfied, then sample size is increased.

Numerical experience indicates that the norm test can be unduly restrictive, often leading to a very  fast increase in the sample size, negating the benefits of adaptive sampling. 
An indication that the inner product test increases the sample size more slowly than the norm test can be see through the following argument. Let $|S_{i}|$, $|S_{n}|$ represent the minimum number of samples required to satisfy the inner product test \eqref{inner-i} and the norm test \eqref{dss-sample-test}, respectively,  at any given iterate $x$, using the same value of $\theta$. A simple computation (see Appendix~A) shows that
\begin{equation}   \label{beta1}
	\frac{|S_{i}|}{|S_{n}|} = \beta(x) \leq 1, 
\end{equation}
	where 
\begin{equation} \label{beta2}
	\beta(x) =\frac{\E[\|\nabla F_{i}(x)\|^2 \cos^2(\chi_i)]- \|\nabla F(x)\|^2}{\E[\|\nabla F_{i}(x)\|^2] - \|\nabla F(x)\|^2}
\end{equation}
	and $\chi_i$ is the angle between $\nabla F_i(x_k)$ and $\nabla F(x_k)$. The quantity $\beta(x)$ is the ratio of the variance  the individual gradients along the true gradient direction and the total variance of the individual gradients.
The numerical results presented in Section~\ref{numerical} are consistent with this observation and show that $\beta(x_k)$ is often much less than~1.

\section{ Analysis}  \label{sec:or}
In order to establish linear convergence  for this method, it is necessary to introduce an additional condition that has only a slight effect on the algorithm in practice, but guarantees the quality of the search direction in  difficult cases. In this section, we first describe this test, and in the second part we establish some results on convergence rates.

\subsection{Orthogonality Test}
Establishing a convergence rate usually involves showing that the step direction is bounded away from orthogonality to $\nabla F(x_k)$. However,  iteration \eqref{iter} with a sample $S_k$ satisfying the inner product condition \eqref{inner-i} does not necessarily enjoy this property. The possible near orthogonality corresponds to the case where the ratio $\beta(x)$ defined above is near zero and occurs when the variance in the individual gradients is very large compared to the variance in the individual gradients along the true gradient direction.
 Although we have not observed  very small values of $\beta(x_k)$ in our numerical tests, this is harmful in principle and to prove convergence we must be able to avoid this possibility.   
We propose a test that imposes a loose bound on the component of $\nabla F_{S_k}(x_k)$ orthogonal  to the true gradient.
This test, together with \eqref{inner-i}, allows us to prove a linear convergence result for the adaptive sampling algorithm,  when $F$ is strongly convex. 

To motivate the orthogonality test, we note that the component of $\nabla F_{S_k}(x_k)$ orthogonal to $\nabla F(x_k)$ is 0 in expectation, i.e., 
\begin{equation*} 
\E\left[\nabla F_{S_k}(x_k) - \frac{\nabla F_{S_k}(x_k)^T\nabla F(x_k)}{\|\nabla F(x_k)\|^2}\nabla F(x_k)\right] =0,
\end{equation*}  
but that is not sufficient.
We must bound the variance of this orthogonal component, and to achieve this we require that  the sample size $|S_k|$ be large enough to satisfy,
\begin{equation} \label{orthogonal}
\E\left[\left\|\nabla F_{S_k}(x_k) - \frac{\nabla F_{S_k}(x_k)^T\nabla F(x_k)}{\|\nabla F(x_k)\|^2}\nabla F(x_k)\right\|^2\right] \leq \,\nu^2 \, \|\nabla F(x_k)\|^2 ,
\end{equation} 
for some positive constant $\nu$ whose choice is discussed in Section~\ref{practical}. 
For a given sample size, this condition can be expressed, using the true variance of individual gradients, as
\begin{equation} \label{orth-i}
\frac{\E\left[\left\|\nabla F_i(x_k) - \frac{\nabla F_i(x_k)^T\nabla F(x_k)}{\|\nabla F(x_k)\|^2}\nabla F(x_k)\right\|^2\right]}{|S_k|} \leq \nu^2 \|\nabla F(x_k)\|^2.
\end{equation} 
The \emph{(exact variance) orthogonality test} states that if this inequality is not satisfied, the sample size $|S|$ should be increased. 

Reasoning as in \eqref{final}, we can derive a variant of the orthogonality test based on sample approximations. This \emph{(approximate) orthogonality test} is given by
\begin{equation} \label{orthogonal-p}
\frac{1}{|S_k| - 1} \frac{\sum_{i \in S_k}\left\|\nabla F_i(x_k) - \frac{\nabla F_i(x_k)^T\nabla F_{S_k}(x_k)}{\|\nabla F_{S_k}(x_k)\|^2}\nabla F_{S_k}(x_k)\right\|^2}{|S_k|} \leq \nu^2 \|\nabla F_{S_k}(x_k)\|^2.
\end{equation}

It is  interesting to note that, since \eqref{variancet} implies that the root mean square of the component of the step along the gradient is bounded below by $\sqrt{1 - \theta^2}\|\nabla F(x_k)\|$,  imposition of \eqref{orth-i} will tend to keep the tangent of the angle $\chi_k$ between $\nabla F_{S_k}(x_k)$ and $\nabla F(x_k)$ below $\nu/ \sqrt{1 - \theta^2}$, providing a limit on the near orthogonality of these two vectors.

\subsection{Convergence Analysis}
The orthogonality test, in conjunction with the inner product test allows the algorithm to make sufficient progress at every iteration, in expectation. More precisely, we now establish three convergence results for the \emph{exact} versions of these two tests, namely \eqref{inner-i} and \eqref{orth-i}. Our results apply to iteration \eqref{iter} with a fixed steplength.
We start by establishing a technical lemma.

\begin{lem} \label{c-thmlin} 
	Suppose that $F$ is twice continuously differentiable and that  there exists a constant $L > 0$ such that 
	\begin{equation}  \label{c-lip}
	 \nabla^2 F(x) \preceq L I, \quad \forall x \in \R^d.
	\end{equation} 
	Let $\{x_k\}$ be the iterates generated by  iteration \eqref{iter} with any $x_0$, where $|S_{k}|$ is chosen such that the (exact variance) inner product test \eqref{inner-i} and the (exact variance) orthogonality test \eqref{orth-i}  are satisfied at each iteration for any given  constants $\theta > 0$ and $\nu > 0$. Then, for any $k$,
	\begin{align}\label{c-sample-norm}
	\E\left[\|\nabla F_{S_k}(x_k)\|^2\right] 
	& \leq(1 + \theta^2 + \nu^2)\|\nabla F(x_k)\|^2 .
	\end{align}
	Moreover, if the steplength satisfies 
	\begin{equation}   \label{c-stepform}
	\alpha_k= \alpha \leq \frac{1}{(1 + \theta^2+\nu^2)L},
	\end{equation}
	we have that 
	\begin{align} 
	\E[F(x_{k+1})] & \leq \E[F(x_k)]  - \frac{\alpha}{2} \|\nabla F(x_k)\|^2   \label{c-lin-ineq} .
	\end{align}  
\end{lem}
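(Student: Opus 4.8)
The plan is to prove the two assertions in sequence, using the variance bound \eqref{c-sample-norm} as the main ingredient for the descent estimate \eqref{c-lin-ineq}. For \eqref{c-sample-norm}, I would decompose the sampled gradient $\nabla F_{S_k}(x_k)$ into its component along $\nabla F(x_k)$ and its orthogonal complement. Writing $g = \nabla F_{S_k}(x_k)$ for brevity, the orthogonal (Pythagorean) decomposition gives
$$\|g\|^2 = \frac{(g^T\nabla F(x_k))^2}{\|\nabla F(x_k)\|^2} + \left\|g - \frac{g^T\nabla F(x_k)}{\|\nabla F(x_k)\|^2}\nabla F(x_k)\right\|^2,$$
since the two terms on the right are exactly the squared norms of the parallel and transverse pieces. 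Taking expectations (conditional on $x_k$) splits the bound into two contributions that the two tests were designed to control separately.

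For the parallel piece, I would expand the square in the variance condition \eqref{variancet}—which the assumed condition \eqref{inner-i} guarantees through the averaging bound \eqref{inner}—and use the unbiasedness identity \eqref{recall}, namely $\E[g^T\nabla F(x_k)] = \|\nabla F(x_k)\|^2$, to cancel the cross term and obtain $\E[(g^T\nabla F(x_k))^2] \leq (1+\theta^2)\|\nabla F(x_k)\|^4$. Dividing by $\|\nabla F(x_k)\|^2$ bounds the first expected term by $(1+\theta^2)\|\nabla F(x_k)\|^2$. For the transverse piece, the bound $\nu^2\|\nabla F(x_k)\|^2$ is precisely the orthogonality condition \eqref{orthogonal}, which the assumed condition \eqref{orth-i} guarantees. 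Summing the two bounds yields \eqref{c-sample-norm}.

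For the descent inequality \eqref{c-lin-ineq}, I would start from the standard quadratic upper bound implied by the Lipschitz condition \eqref{c-lip}, evaluated at $x_{k+1} = x_k - \alpha g$:
$$F(x_{k+1}) \leq F(x_k) - \alpha\, \nabla F(x_k)^T g + \frac{L\alpha^2}{2}\|g\|^2.$$
Taking the expectation conditional on $x_k$ and using both unbiasedness, $\E[g]=\nabla F(x_k)$, and the bound \eqref{c-sample-norm} gives
$$\E[F(x_{k+1})] \leq F(x_k) - \alpha\left(1 - \tfrac{L\alpha}{2}(1+\theta^2+\nu^2)\right)\|\nabla F(x_k)\|^2.$$
The steplength restriction \eqref{c-stepform} forces $\tfrac{L\alpha}{2}(1+\theta^2+\nu^2) \leq \tfrac12$, so the bracketed factor is at least $\tfrac12$, which delivers \eqref{c-lin-ineq}.

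I expect no serious obstacle: the second part is a textbook descent estimate once \eqref{c-sample-norm} is in hand. The one step requiring care is the orthogonal decomposition and the recognition that the two tests were tailored to bound the two orthogonal components—the inner product test controlling the parallel variance and the orthogonality test the transverse variance—so that their sum collapses into the clean constant $1+\theta^2+\nu^2$. I would also keep in mind that the expectations in \eqref{variancet} and \eqref{orthogonal} are conditional on $x_k$, so that $\|\nabla F(x_k)\|$ is a constant throughout the decomposition; under this conditioning $\E[F(x_k)]=F(x_k)$, which is the sense in which the right-hand side of \eqref{c-lin-ineq} is to be read.
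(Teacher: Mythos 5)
Your proposal is correct and follows essentially the same route as the paper: the Pythagorean split of $\nabla F_{S_k}(x_k)$ into parallel and transverse components is exactly the rearrangement the paper performs when it expands the orthogonality condition \eqref{orthogonal} to get $\E[\|\nabla F_{S_k}(x_k)\|^2] \leq \E[(\nabla F_{S_k}(x_k)^T\nabla F(x_k))^2]/\|\nabla F(x_k)\|^2 + \nu^2\|\nabla F(x_k)\|^2$, and the parallel piece is bounded via \eqref{variancet} and unbiasedness just as you describe. The descent step is likewise the same standard quadratic-upper-bound argument; your remark on conditioning on $x_k$ is a sound (and slightly cleaner) reading of the expectations the paper leaves implicit.
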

\begin{proof} Since \eqref{orth-i} is satisfied, we have that \eqref{orthogonal} holds. Thus, recalling 
	 \eqref{recall} we have that \eqref{orthogonal} can be written as
	
	\begin{align} \nonumber
	\E\left[\left\|\nabla F_{S_k}(x_k) -  \frac{\nabla F_{S_k}(x_k)^T\nabla F(x_k)}{\|\nabla F(x_k)\|^2}\nabla F(x_k)\right\|^2\right] & ~~~~~~~~~~~~~~~~~~~~~~~~~~~~~~~~~~~~~~~~~~~~~~~\end{align}
\begin{align}
	 & ~~~~~~~~~ = ~ \E\left[\|\nabla F_{S_k}(x_k)\|^2\right] -\frac{2 \E \left[\left( \nabla F_{S_k}(x_k)^T\nabla F(x_k)\right)^2\right]}{\|\nabla F(x_k)\|^2} 
	 + \frac{\E \left[\left( \nabla F_{S_k}(x_k)^T\nabla F(x_k)\right)^2\right]}{\|\nabla F(x_k)\|^2} \nonumber\\
	&~~~~~~~~~ = ~ \E\left[\|\nabla F_{S_k}(x_k)\|^2\right] - \frac{\E \left[\left( \nabla F_{S_k}(x_k)^T\nabla F(x_k)\right)^2\right]}{\|\nabla F(x_k)\|^2}  \nonumber \\
	&  ~~~~~~~~~ \leq ~ \nu^2 \,\|\nabla F(x_k)\|^2. \nonumber
	\end{align}
	Therefore,
	
	\begin{equation}\label{step-two-term}
	\E\left[\|\nabla F_{S_k}(x_k)\|^2\right] \leq \frac{\E \left[\left( \nabla F_{S_k}(x_k)^T\nabla F(x_k)\right)^2\right]}{\|\nabla F(x_k)\|^2} + \nu^2 \|\nabla F(x_k)\|^2.
	\end{equation}
	To bound the first term on the right side of this inequality, we use the inner product test. Since $|S_k|$  satisfies   \eqref{inner-i}, the inequality \eqref{variancet} holds, and this in turn yields
	\begin{align*} 
	\E \left[\left( \nabla F_{S_k}(x_k)^T\nabla F(x_k)\right)^2\right] &\leq  \|\nabla F(x_k)\|^4 + \theta^2 \|\nabla F(x_k)\|^4 \\
	&= (1 + \theta^2) \|\nabla F(x_k)\|^4.
	\end{align*}  
	Substituting in \eqref{step-two-term}, we get the following bound on the length of the search direction:
	\begin{align*}
	\E\left[\|\nabla F_{S_k}(x_k)\|^2\right] 
	& \leq(1 + \theta^2 + \nu^2)\|\nabla F(x_k)\|^2 ,
	\end{align*}
	which proves \eqref{c-sample-norm}.
	 Using this inequality, \eqref{iter}, \eqref{c-lip}, and \eqref{c-stepform} we have,
	\begin{align} 
	\E[F(x_{k+1}) ] &\leq \E[F(x_k)]  -\E\left[\alpha \nabla F_{S_k}(x_k)^T\nabla F(x_k)\right] + \E\left[\frac{L\alpha^2}{2} \|\nabla F_{S_k}(x_k)\|^2\right] \nonumber\\
	&= \E[F(x_k) ]  -\alpha \|\nabla F(x_k)\|^2 + \frac{L\alpha^2}{2} \E[\|\nabla F_{S_k}(x_k)\|^2] \nonumber\\
&\leq \E[F(x_k)]  - \alpha \|\nabla F(x_k)\|^2 +\frac{ \alpha^2}{2}{L(1 + \theta^2+\nu^2)}\|\nabla F(x_k)\|^2\nonumber\\	
&\leq \E[F(x_k) ]  - \frac{\alpha}{2} \|\nabla F(x_k)\|^2  . \nonumber
	\end{align}
\end{proof}

\noindent
We now show that   iteration \eqref{iter}, using a fixed steplength $\alpha$, is linearly convergent when $F$ is strongly convex. In the discussion that follows, $x^*$ denotes the  minimizer of $F$.
\begin{thm} \label{thmlin} (Strongly Convex Objective.)
	Suppose that $F$ is twice continuously differentiable and that  there exist constants $0 < \mu \leq L$ such that 
	\begin{equation}
	\mu I   \preceq \nabla^2 F(x) \preceq L I, \quad \forall x \in \R^d.
	\end{equation} 
	Let $\{x_k\}$ be the iterates generated by  iteration \eqref{iter} with any $x_0$, where $|S_{k}|$ is chosen such that the (exact variance) inner product test \eqref{inner-i} and the (exact variance) orthogonality test \eqref{orth-i}  are satisfied at each iteration for any given  constants $\theta > 0$ and $\nu > 0$. Then, if the steplength satisfies \eqref{c-stepform}
	we have that   
	\begin{equation} \label{linear}
	\E[F(x_k) - F(x^*)] \leq \rho^k (F(x_0) - F(x^*)),
	\end{equation}
         where 
	\begin{equation} \label{mualpha}
	\rho= 1 -\mu \alpha  .
	\end{equation}
	In particular, if  $\alpha$  takes its maximum value in \eqref{c-stepform}, i.e., $\alpha =1/( 1 + \theta^2+\nu^2)L$, we have
	\begin{equation} \label{parsl}
	\rho= 1 -\frac{\mu}{L(1 + \theta^2+\nu^2)}.
	\end{equation}
\end{thm}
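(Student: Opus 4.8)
The plan is to combine the one-step decrease already established in Lemma~\ref{c-thmlin} with the gradient-domination inequality that strong convexity supplies, and then unroll the resulting contraction. Since the hypotheses of Theorem~\ref{thmlin} include the upper curvature bound $\nabla^2 F \preceq L I$, Lemma~\ref{c-thmlin} applies verbatim under the stepsize condition \eqref{c-stepform}, giving
\begin{equation*}
\E[F(x_{k+1})] \leq \E[F(x_k)] - \frac{\alpha}{2}\,\E\big[\|\nabla F(x_k)\|^2\big].
\end{equation*}
I read the bound \eqref{c-lin-ineq} in its total-expectation form: it holds conditioned on $x_k$, and taking expectation over the history preserves it via the tower property, since each sample $S_k$ is drawn independently.

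Next I would convert the gradient term into a function-value gap using the lower curvature bound $\mu I \preceq \nabla^2 F$. Strong convexity gives, for every $x$ and $y$, the quadratic lower model $F(y) \geq F(x) + \nabla F(x)^T(y-x) + \frac{\mu}{2}\|y-x\|^2$; minimizing the right-hand side over $y$ at $y = x - \mu^{-1}\nabla F(x)$ and evaluating the original inequality at $y = x^*$ yields $F(x^*) \geq F(x) - \frac{1}{2\mu}\|\nabla F(x)\|^2$, i.e. the gradient-domination inequality $\|\nabla F(x)\|^2 \geq 2\mu\,(F(x) - F(x^*))$. Substituting this into the descent inequality and subtracting $F(x^*)$ from both sides produces the one-step contraction
\begin{equation*}
\E[F(x_{k+1}) - F(x^*)] \leq (1 - \mu\alpha)\,\E[F(x_k) - F(x^*)].
\end{equation*}

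Finally I would iterate this recursion from $k=0$ to obtain \eqref{linear} with $\rho = 1 - \mu\alpha$ as in \eqref{mualpha}, and verify that $\rho \in (0,1)$ so the bound is genuinely contractive: positivity of $\mu\alpha$ is immediate, while $\mu\alpha \leq \mu/\big((1+\theta^2+\nu^2)L\big) \leq 1/(1+\theta^2+\nu^2) < 1$ follows from $\alpha \leq 1/\big((1+\theta^2+\nu^2)L\big)$, $\mu \leq L$, and $\theta,\nu > 0$. The special case \eqref{parsl} is then just the substitution of the maximal steplength $\alpha = 1/\big((1+\theta^2+\nu^2)L\big)$ into $\rho = 1 - \mu\alpha$. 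I do not anticipate a substantive obstacle; the only point requiring care is the bookkeeping of conditional versus total expectation when promoting the single-step bound of Lemma~\ref{c-thmlin} to a recursion valid across the random sample path, together with the routine check that the stepsize constraint forces $\rho$ strictly below $1$.
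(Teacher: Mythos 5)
Your proposal is correct and follows essentially the same route as the paper's proof: apply the descent bound \eqref{c-lin-ineq} from Lemma~\ref{c-thmlin}, invoke the strong-convexity inequality $\|\nabla F(x_k)\|^2 \geq 2\mu\,(F(x_k)-F(x^*))$, subtract $F(x^*)$, and unroll the resulting contraction. The only differences are cosmetic: you derive the gradient-domination inequality explicitly where the paper cites it as well known, and you add the (routine but worthwhile) checks that the recursion survives total expectation and that $\rho \in (0,1)$.
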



\begin{proof} It is well known \cite{bertsekas2003convex} that for strongly convex functions   
\[
\|\nabla F(x_k)\|^2 \geq 2\mu [F(x_k) - F(x^*)].
\]
 Substituting this in \eqref{c-lin-ineq} and subtracting $F(x^*)$ from both sides we obtain,
\begin{align}
\E[F(x_{k+1}) - F(x^*)] &\leq  \E[F(x_k) - F(x^*)](1 -\mu \alpha ) , \nonumber 
	\end{align}
	from which the theorem follows.
\end{proof}

\noindent
Note that when $\theta = \nu = 0$ we recover the classical result for the exact gradient method.
We now consider the case when $F$ is convex, but not strongly convex.
\newpage
\begin{thm} \label{thmsublincon}
	(General Convex Objective.) Suppose that $F$ is twice continuously differentiable and convex, and that there exists a constant $ L > 0$ such that 
	\begin{equation}  \label{teen}
	\nabla^2 F(x) \preceq L I, \quad \forall x \in \R^d.
	\end{equation} 
	Let $\{x_k\}$ be the iterates generated by  iteration \eqref{iter} with any $x_0$, where $|S_{k}|$ is chosen such that the exact variance inner product test \eqref{inner-i} and  orthogonality test \eqref{orth-i}  are satisfied at each iteration for any given  constants $\theta > 0$ and $\nu > 0$. Then, if the steplength satisfies the strict version of \eqref{c-stepform}, that is 
	\begin{equation}   \label{stepform}
	\alpha_k = \alpha < \frac{1}{(1 + \theta^2+\nu^2)L},
	\end{equation}
	we have   for any positive integer $T$,
	\begin{align*}
	\min_{0 \leq k \leq T-1}\E \left[ F \left(x_{k} \right)\right] - F^* \leq \frac{1}{2\alpha c T}\|x_0 - x^*\|^2 ,\\
	\end{align*}
	where  $F^*$ is the optimal function value, the constant $c>0$ is given by $c = 1 - L \alpha (1 + \theta^2 + \nu^2) $,  and $x^* \in  \{x: x= \arg \min_x F(x)\}.$
\end{thm}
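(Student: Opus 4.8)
The plan is to track the squared distance to the minimizer $\|x_k - x^*\|^2$ and telescope, rather than working directly with function values as in the strongly convex case of Theorem~\ref{thmlin}. This is the standard device for the merely-convex regime, where no contraction factor $\rho < 1$ is available and one must instead average the optimality gaps over $T$ iterations.

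First I would expand, using the update \eqref{iter} with fixed steplength $\alpha$,
\[
\|x_{k+1} - x^*\|^2 = \|x_k - x^*\|^2 - 2\alpha\, \nabla F_{S_k}(x_k)^T(x_k - x^*) + \alpha^2 \|\nabla F_{S_k}(x_k)\|^2,
\]
and take the conditional expectation given $x_k$. Unbiasedness of the sampled gradient, $\E[\nabla F_{S_k}(x_k)] = \nabla F(x_k)$, turns the cross term into $\nabla F(x_k)^T(x_k - x^*)$, while the last term is controlled by the already-established bound \eqref{c-sample-norm} of Lemma~\ref{c-thmlin}, which relies only on $\nabla^2 F \preceq LI$ and therefore applies here without any strong convexity assumption.

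Next I would invoke two standard consequences of convexity and $L$-smoothness: the gradient inequality $\nabla F(x_k)^T(x_k - x^*) \geq F(x_k) - F^*$, and the smoothness bound $\|\nabla F(x_k)\|^2 \leq 2L(F(x_k) - F^*)$ relating the gradient norm to the optimality gap. Combining these with \eqref{c-sample-norm} collapses the right-hand side into
\[
\E\left[\|x_{k+1} - x^*\|^2 \mid x_k\right] \leq \|x_k - x^*\|^2 - 2\alpha c\,(F(x_k) - F^*),
\]
where $c = 1 - L\alpha(1 + \theta^2 + \nu^2)$ is strictly positive precisely because of the strict steplength restriction \eqref{stepform}.

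Finally I would take total expectations, rearrange to $2\alpha c\,\E[F(x_k) - F^*] \leq \E[\|x_k - x^*\|^2] - \E[\|x_{k+1} - x^*\|^2]$, sum over $k = 0, \ldots, T-1$ so the right-hand side telescopes to at most $\|x_0 - x^*\|^2$, and bound the minimum over $k$ by the average gap. I do not anticipate a genuine obstacle; the only non-mechanical ingredient is the smoothness bound $\|\nabla F(x_k)\|^2 \leq 2L(F(x_k) - F^*)$, whose role is to convert the residual gradient-norm term — harmless in the strongly convex proof because it was absorbed into the descent Lemma~\ref{c-thmlin} — into a quantity proportional to the optimality gap, so that it can be merged into the telescoping sum. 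The use of the \emph{strict} inequality in \eqref{stepform}, rather than the ``$\leq$'' of \eqref{c-stepform}, is exactly what keeps $c > 0$ and prevents the final bound from degenerating.
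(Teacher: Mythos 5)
Your proposal is correct and follows essentially the same route as the paper's own proof: the same expansion of $\|x_{k+1}-x^*\|^2$, the same use of unbiasedness, the bound \eqref{c-sample-norm} from Lemma~\ref{c-thmlin}, the convexity inequality $\nabla F(x_k)^T(x_k-x^*) \geq F(x_k)-F^*$, the smoothness bound $\|\nabla F(x_k)\|^2 \leq 2L(F(x_k)-F^*)$ (which the paper states as \eqref{yuri} and proves for completeness), and the identical telescoping-plus-averaging conclusion with $c = 1 - L\alpha(1+\theta^2+\nu^2)$. The only cosmetic difference is that you phrase the recursion via conditional expectation given $x_k$ before taking total expectation, which is slightly more careful bookkeeping than the paper's notation but not a different argument.
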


\begin{proof}
	From Lemma~\ref{c-thmlin} we have that
	\begin{align*}
	\E\left[\|\nabla F_{S_k}(x_k)\|^2\right] 
	& \leq(1 + \theta^2 + \nu^2)\|\nabla F(x_k)\|^2 .
	\end{align*}
	Using this inequality and \eqref{teen} and considering any $x^* \in X^*$ we have,
	\begin{align}
	\E [\|x_{k+1} - x^*\|^2] &= \|x_k - x^*\|^2 - 2\alpha \E [\nabla F_{S_k}(x_k)^T (x_k - x^*)] + \alpha^2\E [\|\nabla F_{S_k}(x_k)\|^2] \nonumber\\
	&\leq \|x_k - x^*\|^2 - 2\alpha \nabla F(x_k)^T (x_k - x^*) + \alpha^2(1 + \theta^2 + \nu^2)\|\nabla F(x_k)\|^2 \nonumber\\
	&\leq \|x_k - x^*\|^2 - 2\alpha (F(x_k) - F^*) +  \alpha^2(1 + \theta^2 + \nu^2)\|\nabla F(x_k)\|^2 ,   \label{pre-yuri}
	\end{align}
where the last inequality follows from convexity of $F$. 

Now, for any function having Lipschitz continuous gradients 
\begin{equation}   \label{yuri}
  \|\nabla F(x)\|^2 \leq 2L (F(x) - F^*).
  \end{equation}
  This result is shown in \cite{nesterov2013introductory}, but for the sake of completeness we give here a  proof. Since $F$ has a Lipschitz continuous gradient, 
   	\begin{align*}
	F\left(x - \frac{1}{L} \nabla F(x)\right) &\leq F(x) + \nabla F(x)^T\left(x - \frac{1}{L} \nabla F(x) - x\right) + \frac{L}{2}\left\| x - \frac{1}{L} \nabla F(x) - x\right\|^2 \\
	&= F(x) - \frac{1}{L}\|\nabla F(x)\|^2 + \frac{1}{2L}\|\nabla F(x)\|^2 \\
	&=F(x) - \frac{1}{2L}\|\nabla F(x)\|^2 .
	\end{align*}
	Recalling that $F^*$ is the optimal function value  we have,
	\begin{align*}
	F^* \leq F\left(x - \frac{1}{L} \nabla F(x)\right) \leq F(x) - \frac{1}{2L}\|\nabla F(x)\|^2,
	\end{align*}
	which proves \eqref{yuri}. 
	
	Substituting \eqref{yuri} in \eqref{pre-yuri} we obtain 
\begin{align*}
	\E [\|x_{k+1} - x^*\|^2] 
	&\leq \|x_k - x^*\|^2 - 2\alpha (F(x_k) - F^*) +  2L\alpha^2(1 + \theta^2 + \nu^2)(F(x_k) - F^*) \nonumber\\
	&=\|x_k - x^*\|^2 - 2\alpha (F(x_k) - F^*)\left(1 - L\alpha(1 + \theta^2 + \nu^2)\right) 	 \nonumber\\
	&= \|x_k - x^*\|^2 - 2\alpha c (F(x_k) - F^*),
	\end{align*}	
 by the definition of $c$. We can write this inequality as
	\begin{align*}
	\E [F(x_k)] - F^* \leq \frac{1}{2\alpha c} \left(\E [\|x_k - x^*\|^2] - \E [\|x_{k+1} - x^*\|^2] \right),
	\end{align*}
	and summing, we obtain
	 \begin{align*}
	 \min_{0 \leq k \leq T-1}\E \left[ F \left(x_{k} \right)\right] - F^* &\leq \sum_{k=0}^{T-1}\frac{1}{T}(\E [F(x_{k})] - F^*) \\
	 &\leq \frac{1}{2 \alpha c T}(\E [\| x_0 - x^*\|^2] - \E [\|x_{T} - x^*\|^2]) \\
	 &\leq \frac{1}{2 \alpha cT}\| x_0 - x^*\|^2.
	 \end{align*} 
\end{proof}
This result establishes a sublinear rate of convergence in function values by referencing the best function value obtained after every $T$ iterates.	We now consider the case when $F$ is nonconvex and bounded below.

    					\begin{thm} \label{thmsublin}
    						(Nonconvex Objective.) Suppose that $F$ is twice continuously differentiable and bounded below,  and that  there exist a constant $ L > 0$ such that 
    						\begin{equation}
    					     \nabla^2 F(x) \preceq L I, \quad \forall x \in \R^d.
    						\end{equation} 
    						Let $\{x_k\}$ be the iterates generated by  iteration \eqref{iter} with any $x_0$, where $|S_{k}|$ is chosen such that the (exact variance) inner product test \eqref{inner-i} and the (exact variance) orthogonality test \eqref{orth-i}  are satisfied at each iteration for any given  constants $\theta > 0$ and $\nu > 0$. Then, if the steplength satisfies 
    						\begin{equation}   \label{stepformnon}
    						\alpha_k = \alpha \leq \frac{1}{(1 + \theta^2+\nu^2)L},
    						\end{equation}
    						then
\begin{equation}   \label{convergence}
      \lim_{k \rightarrow \infty} \mathbb{E} [\| \nabla F(x_k)\|^2] \rightarrow 0.
\end{equation}  
Moreover, for any positive integer $T$ we have that 
    						\begin{align*}
    						\min_{0\leq k \leq T-1} \E [\|\nabla F(x_k)\|^2] &\leq \frac{2}{\alpha T}  (F(x_0) - F_{min}),
    						\end{align*}
    						where  $F_{min}$ is a lower bound on  $F$ in $\mathbb{R}^d$.
    					\end{thm}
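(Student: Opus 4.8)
The plan is to derive both conclusions directly from the one-step decrease already established in Lemma~\ref{c-thmlin}; no new structural estimate is required, and the argument reduces to a telescoping sum. First I would invoke \eqref{c-lin-ineq}, which under the stated assumptions and the steplength restriction \eqref{stepformnon} guarantees an expected decrease of $F$ at each iteration. Since the iterates $x_k$ are themselves random, I would read \eqref{c-lin-ineq} as holding in conditional expectation given the history up to iteration $k$; taking total expectation on both sides then yields the unconditional form
\begin{equation*}
\E[F(x_{k+1})] \leq \E[F(x_k)] - \frac{\alpha}{2}\,\E[\|\nabla F(x_k)\|^2].
\end{equation*}

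Next I would rearrange this to isolate the gradient term,
\begin{equation*}
\frac{\alpha}{2}\,\E[\|\nabla F(x_k)\|^2] \leq \E[F(x_k)] - \E[F(x_{k+1})],
\end{equation*}
and sum from $k=0$ to $T-1$. The right-hand side telescopes to $F(x_0) - \E[F(x_T)]$, and since $F$ is bounded below by $F_{min}$ we have $\E[F(x_T)] \geq F_{min}$, giving
\begin{equation*}
\sum_{k=0}^{T-1} \E[\|\nabla F(x_k)\|^2] \leq \frac{2}{\alpha}\bigl(F(x_0) - F_{min}\bigr).
\end{equation*}
Bounding the minimum term by the average over the horizon then produces the claimed finite-horizon estimate $\min_{0\leq k\leq T-1}\E[\|\nabla F(x_k)\|^2] \leq \frac{2}{\alpha T}(F(x_0)-F_{min})$.

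For the asymptotic statement \eqref{convergence}, I would let $T\to\infty$ in the summed inequality; because the partial sums are bounded by $\frac{2}{\alpha}(F(x_0)-F_{min})$ uniformly in $T$ and the summands are nonnegative, the series $\sum_{k=0}^{\infty}\E[\|\nabla F(x_k)\|^2]$ converges, which forces its terms to vanish, i.e.\ $\E[\|\nabla F(x_k)\|^2]\to 0$. I do not expect a genuinely hard step here: the only point demanding care is the stochastic bookkeeping in the first paragraph, namely ensuring that the per-iteration decrease is applied in \emph{total} expectation (rather than merely conditionally) so that the telescoping remains legitimate when the $x_k$ are random; once that is in place, the remainder is a routine summation and a standard convergent-series argument.
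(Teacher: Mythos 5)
Your proposal is correct and follows essentially the same route as the paper's proof: invoke the expected-decrease inequality \eqref{c-lin-ineq} from Lemma~\ref{c-thmlin}, telescope the sum, use the lower bound $F_{min}$, bound the minimum by the average for the finite-horizon estimate, and conclude \eqref{convergence} from the convergence of the nonnegative series. Your explicit remark about applying the decrease in total (rather than conditional) expectation is a slight tightening of the paper's bookkeeping, but it does not change the argument.
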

    					\begin{proof}
					From Lemma~\ref{c-thmlin} we have
    					\begin{align*}
    						\E[F(x_{k+1})] &\leq \E[F(x_k)]  - \frac{\alpha}{2}\E [\|\nabla F(x_k)\|^2],
    					\end{align*}	
    				    and hence
    				    \[
    				    \E [\|\nabla F(x_k)\|^2] \leq \frac{2}{\alpha} \E [F(x_k) - F(x_{k+1})] .
    				    \]
    					Summing both sides of this inequality from $k= 0$ to $T-1$, and since
					$F$ is bounded below by  $F_{min}$, we get
    					\[
    					\sum_{k=0}^{T-1}\E [\|\nabla F(x_k)\|^2] \leq \frac{2}{\alpha} \E [F(x_0) - F(x_{\mbox{\sc t}})] 
					\leq \frac{2}{\alpha}  [F(x_0) - F_{min}].
    					\]
					Taking limits, we obtain
\[
    \lim_{T \rightarrow \infty} \sum_{k=0}^{T-1}\E [\|\nabla F(x_k)\|^2] < \infty,
 \]
 which implies \eqref{convergence}.
    					We can also conclude that
    					\begin{align*}
    					\min_{0\leq k \leq T-1} \E [\|\nabla F(x_k)\|^2] \leq & \frac{1}{T}\sum_{k=0}^{T}\E [\|\nabla F(x_k)\|^2] 
					\leq \frac{2}{\alpha T}  (F(x_0) - F_{min}).
    					\end{align*}
    				\end{proof}	
    				
This theorem shows that the sequence of gradients $\{ \|\nabla F(x_k) \|\}$ converges to zero, in expectation. It also establishes a global sublinear rate of convergence of the smallest gradients generated after every $T$ steps. Results with a similar flavor have been established by Ghadimi and Lan \cite{ghadimi2013stochastic} in the context of nonconvex stochastic programming.

\section{Practical Implementation}  \label{practical}

In this section, we  describe  our line search procedure, and  present a technique for making the algorithm robust in the  early stages of a run when sample variances and sample gradients are unreliable.  We also  discuss a heuristic that determines  how much to increase the sample size $|S_k|$. We then present the complete algorithm, followed by a discussion of the choice of  some important algorithmic parameters.

\subsection{Line Search} 
We select the steplength parameter $\alpha_k$ by a backtracking line search based on the sampled function $F_{S_k}$  and an adaptive estimate $L_k$ of the Lipschitz constant of the gradient. The initial value of the steplength at the $k$-th iteration of the algorithm is given by $\alpha_k= 1/L_k$.
 If sufficient decrease in $F_{S_k}$ is not obtained, $L_k$ is increased by a constant factor until such a decrease is achieved. 
 
 Now, since overestimating the Lipschitz constant leads to unnecessarily small steps, at every outer iteration of the algorithm the initial value $L_k$ is set to a fraction of the previous estimate $L_{k-1}$.
This reset strategy has been used in deterministic convex optimization, 
but requires some attention in the stochastic setting.  Specifically, decreasing the Lipschitz constant by a fixed fraction at every iteration may result in an inadequate steplength. We propose a variance-based rule described below to compute a contraction factor $\zeta_k$ at every iteration. Our line search strategy is summarized in Algorithm~\ref{alg:lip}. \\
\begin{algorithm}[H] 
	\caption{Backtracking Line Search}
	\label{alg:lip}
	{\bf Input:} $L_{k-1} > 0$,  $\eta > 1$
	
	\begin{algorithmic}[1]
		\State Compute $\zeta_k$ as given in \eqref{lipschtiz-decrease}
		\State Set $L_{k} = L_{k-1}/\zeta_k$  \Comment{Decrease the Lipschitz constant}
		\State Compute $F_{new}= F_{S_k}\left(x_k - \frac{1}{L_k}\nabla F_{S_k}(x_k) \right)$
		\While {$ F_{new} > F_{S_k}(x_k) - \frac{1}{2L_k}\|\nabla F_{S_k}(x_k)\|^2$} \Comment {Sufficient decrease}
		\State Set $L_k = \eta L_{k-1}$ \Comment{Increase the Lipschitz constant}
		\State Compute $F_{new}= F_{s_k}\left(x_k - \frac{1}{L_k}\nabla F_{S_k}(x_k) \right)$
		\EndWhile
	\end{algorithmic}
\end{algorithm}

The expansion factor $\eta$ in Step 5 is set to $\eta=1.5$ in our experiments. To determine the contraction factor $\zeta_k$, we reason as follows.
From \eqref{iter} we have 
\[
    \E[F(x_{k+1})] - F(x_k) \leq -\alpha_k \|\nabla F(x_k)\|^2 + \frac{ \alpha_k^2 L}{2} \E[\| \nabla F_{S_k}(x_k)\|_2^2].
\]
Thus we can guarantee an expected decrease in the true objective function if the right hand side is negative, i.e.,
\begin{equation} \label{exp-dec}
\frac{L\alpha_k^2}{2}\left(\var \left(\nabla F_{S_k}(x_k)\right) + \|\nabla F(x_k)\|^2\right) \leq \alpha_k\|\nabla F(x_k)\|^2 ,
\end{equation}  
where 
\begin{equation*}
\var \left(\nabla F_{S_k}(x_k)\right) = \E[\|\nabla F_{S_k}(x_k) - \nabla F(x_k)\|^2] .
\end{equation*}
This is the same variance as  in the norm test \eqref{normt}. As was done in that context we first note that \eqref{exp-dec} holds if 
\begin{equation*} 
\frac{L\alpha_k^2}{2}\left(\var \left(\nabla F_{i}(x_k)\right)/|S_k| + \|\nabla F(x_k)\|^2\right) \leq \alpha_k\|\nabla F(x_k)\|^2.
\end{equation*} 
Next, we approximate the true gradient and true variance using a sampled gradient and a sampled variance to obtain
\begin{equation}    \label{simons}
\frac{L\alpha_k^2}{2}\left(\var_{i \in S_k} \left(\nabla F_{i}(x_k)\right)/|S_k| + \|\nabla F_{S_k}(x_k)\|^2\right) \leq \alpha_k\|\nabla F_{S_k}(x_k)\|^2,
\end{equation} 
where 
\begin{equation*}
\var_{i \in S_k} \left(\nabla F_i(x_k)\right) = \frac{1}{|S_k| - 1} \sum_{i \in S_k}\|\nabla F_i(x_k) - \nabla F(x_k)\|^2.
\end{equation*}
We wish to compute an appropriate value $L_k$ and set $\alpha_k= 1/L_k$. Therefore, since we can assume that $L$ is large enough so that $L_{k-1} < L$, inequality \eqref{simons} gives
\begin{align*}
\frac{1}{L_k}\|\nabla F_{S_k}(x_k)\|^2 & \geq \frac{L_{k-1}}{2L_k^2}(\var_{i \in S_k}\left(\nabla F_i(x_k)\right)/|S_k|+ \|\nabla F_{S_k}(x_k)\|^2) 
\end{align*}
or
\begin{align*}
L_k & \geq \frac{L_{k-1}}{2}\left(\frac{\var_{i \in S_k} \left(\nabla F_i(x_k)\right)}{|S_k|\|\nabla F_{S_k}(x_k)\|^2} + 1\right) 
    \equiv \frac{L_{k-1}}{2} a_k.
\end{align*}
This indicates that when decreasing the Lipschitz estimate by the rule $L_k = L_{k-1}/\zeta_k$ at the start of each iteration, 
  $\zeta_k$ may be chosen as
\begin{equation}\label{lipschtiz-decrease}
\zeta_k = \max\left(1, \frac{2}{a_k}\right)
\end{equation} 
Therefore, $\zeta_k \in [1,2]$, meaning $L_k$ could be left unchanged $(\zeta_k =1)$, or reduced by a factor of at most 2.

Algorithm~\ref{alg:lip} is similar in form to the one proposed in \cite{beck2009fast} for  deterministic functions. However, our line search operates with sampled (e.g. inaccurate) function values,  and the adaptive setting has allowed us to employ a variance-based rule described above for estimating the initial value of $L_k$ at every iteration.
A  line search based on sampled function values that is more close related to ours is  described in  \cite{schmidt2013minimizing}, but it differs from ours in that they use a fixed contraction factor $\zeta$ throughout the algorithm.

\subsection{Sample Control in the Noisy Regime}

In the previous sections, we presented two forms of the inner product and orthogonality tests: one based on the population statistics and one based on samples. Since in many applications only sample statistics are available,  our practical implementation of the algorithm imposes the inner product and orthogonality tests by verifying \eqref{final} instead of \eqref{inner-i}, and \eqref{orthogonal-p} instead of \eqref{orth-i}, respectively.  The \emph{augmented inner product test} consists of the inner product test  \eqref{final} together with the orthogonality test \eqref{orthogonal-p}.

Our numerical experience indicates that these sample approximations are sufficiently accurate, except if we choose to start the algorithm with a very small sample size, say 3, 5, 10. In this highly noisy regime, our conditions may not control the sample correctly  because for small samples, $ \|\nabla F_{S_k}(x_k)\|$ is often much larger than the true gradient $ \|\nabla F(x_k)\|$, and therefore the tests  \eqref{final} and \eqref{orthogonal-p} are  too easily satisfied, preventing increases in the sample size. 
(These difficulties can also arise when using the norm test.)

To obtain a more accurate  estimate of $\nabla F_{S_k}(x_k)$ to use in \eqref{final} and \eqref{orthogonal-p}, we employ the following strategy.
Whenever the sample sizes remain constant for a certain number of iterations, say $r$, we compute the running average of the most recent sample gradients:
\begin{equation} \label{running-avg}
	g_{\rm avg} \defeq \  \frac{1}{r} \Big[\sum_{j=k-r+1}^{k} \nabla F_{S_j}(x_j)\Big].
\end{equation} 
Ideally, $r$ should be chosen such that the iterates in this summation are close enough to provide a good approximation of the gradient at $x_k$ and so that there are enough samples for $g_{\rm avg}$ to be meaningful. (A reasonable default value could be $r=10$.)
If the length of $g_{\rm avg}$ is  small compared with the length of  $\nabla F_{S_k}(x_k)$, we view this as an indication that the latter is not accurate and the sample size should be increased. That is, if 
\begin{equation} \label{gammatest}
	\|g_{\rm avg}\| < \gamma \|\nabla F_{S_k}(x_k)\|, \quad \gamma \in (0,1),
\end{equation}
then the sample size is increased using a rule given in Section~\ref{increase}. The choice of the parameter $\gamma$ is important, we choose it so that under some assumptions inequality \eqref{gammatest} implies
\begin{equation}   \label{storm}
 \|\nabla F_{S_k}(x_k)\| \geq \omega \|\nabla F(x_k)\|,
 \end{equation}
for some constant $\omega$ (say 10).
To relate $\gamma$ and $\omega$, note that since we used $r$ times more samples in computing $g_{\rm avg}$  compared to $\nabla F_{S_k}(x_k)$,    $\| g_{\rm avg}\|$ should be a better estimate of $\| \nabla F(x_k)\|$,  by a factor of $\sqrt{r}$. Although this may be optimistic since the $r$ points $x_j$ used in the running average do not coincide, we nevertheless assume that

\begin{align*}
\|g_{\rm avg}\| -  \|\nabla F(x_k)\|&\approx \frac{1}{\sqrt{r}} \left( \|\nabla F_{S_k}(x_k)\|-\|\nabla F(x_k)\| \right)    \\
\end{align*}
so that
\[
\frac{\|g_{\rm avg}\|}{ \|\nabla F_{S_k}(x_k)\|}  \approx  \frac{1}{\sqrt{r}} +  \left( 1 -  \frac{1}{\sqrt{r}}\right) \frac{\|\nabla F(x_k)\|}{ \|\nabla F_{S_k}(x_k)\|}   . 
\]   

Recalling \eqref{gammatest} and \eqref{storm}, we then set 
\[
     \gamma = \frac{1}{\sqrt{r}} + \left( 1 -  \frac{1}{\sqrt{r}}\right)  \frac{1}{\omega} . 
\]
For example, if $\omega = 10$ and $r=10 $ then satisfaction of (\ref{gammatest}) with  $\gamma=0.38$ should correspond to satisfaction of (\ref{storm}). We choose this value in practice.

\subsection{Increasing the Sample Size}   \label{increase} 
When increasing the sample size, it is natural to require that the new sample satisfy the inner product and the orthogonal tests at the current iterate.  We employ the following heuristic approach  that aims to achieve this goal. 

Suppose we wish to choose a larger sample size $|\hat{S}_k|$.  We assume that increases in sample sizes are gradual enough so that, for any given iterate $x_k$,

\begin{equation*}  
{\rm Var}_{i \in \hat{S}_k}(\nabla F_i(x_k)^T\nabla F_{\hat{S}_k}(x_k)) \approxeq {\rm Var}_{i \in S_k}(\nabla F_i(x_k)^T\nabla F_{S_k}(x_k)).
\end{equation*}

The same logic can be applied to the orthogonality test, i.e.,
\begin{align*}
&{\rm Var}_{i \in \hat{S}_k}\left(\nabla F_i(x_k) - \frac{\nabla F_i(x_k)^T\nabla F_{\hat{S}_k}(x_k)}{\|\nabla F_{\hat{S}_k}(x_k)\|^2}\nabla F_{\hat{S}_k}(x_k)\right) \nonumber\\
&\approxeq {\rm Var}_{i \in S_k}\left(\nabla F_i(x_k) - \frac{\nabla F_i(x_k)^T\nabla F_{S_k}(x_k)}{\|\nabla F_{S_k}(x_k)\|^2}\nabla F_{S_k}(x_k)\right) ,
\end{align*}
and let us also assume that
\begin{equation*} 
\|\nabla F_{\hat{S}_k}(x_k)\| \approxeq \|\nabla F_{S_k}(x_k)\|.
\end{equation*}
Under these simplifying assumptions, we see that the inner product test \eqref{final} and the orthogonality test \eqref{orthogonal-p} are satisfied if we choose $|\hat{S}_k|$ to be
\begin{equation} \label{heuristic-samplesize}
|\hat{S}_k| = \max \left( \frac{{\rm {\rm Var}}_{i \in S_k}(\nabla F_i(x_k)^T\nabla F_{S_k}(x_k))}{\theta^2\|\nabla F_{S_k}(x_k)\|^4}, \frac{{\rm Var}_{i \in S_k}\left(\nabla F_i(x_k) - \frac{\nabla F_i(x_k)^T\nabla F_{S_k}(x_k)}{\|\nabla F_{S_k}(x_k)\|^2}\nabla F_{S_k}(x_k)\right)}{\nu^2\|\nabla F_{S_k}(x_k)\|^2}\right).
\end{equation}
In the case when the sample size is very small and we use the running average \eqref{running-avg}, we increase the sample size according to
\begin{equation} \label{backup-samplesize}
|\hat{S}_k| = \max \left( \frac{{\rm Var}_{i \in S_k}(\nabla F_i(x_k)^Tg_{\rm avg})}{\theta^2\|g_{\rm avg}\|^4}, \frac{{\rm Var}_{i \in S_k}\left(\nabla F_i(x_k) - \frac{\nabla F_i(x_k)^Tg_{\rm avg}}{\|g_{\rm avg}\|^2}g_{\rm avg}\right)}{\nu^2\|g_{\rm avg}\|^2}\right).
\end{equation}
Although these heuristics can be unreliable when the assumptions made in their derivation are not satisfied, they have worked well in  our experiments.

\subsection{The Complete Algorithm}
The final version of the adaptive sampling algorithm that incorporates all the practical implementation techniques described in this section is given in  Algorithm~\ref{alg:complete}.

\newpage
\begin{algorithm} 
	\caption{Adaptive Sampling Method}
	\label{alg:complete}
	{\bf Input:} Initial iterate $x_0$, initial sample  $S_0$. \\
	Sample Test Parameters: $\theta > 0, \nu >0, r>0, \gamma \in (0,1)$ \\
	 Steplength Parameters: $L_0 > 0$, $ \eta > 1$.\\ 
	Set $k \leftarrow 0$\\
	{\bf Repeat} until a convergence test is satisfied:
	\begin{algorithmic}[1]
		\State Compute $d_k = -\nabla F_{S_k}(x_k)$
		\State Compute $L_k$ using Algorithm \eqref{alg:lip}
		\State Set $\alpha_k = 1/L_k$
		\State Compute new iterate: $x_{k+1} = x_k + \alpha_k d_k$
		\State Set $k \leftarrow k + 1$
		\State Set $|S_k| = |S_{k-1}|$ and choose a new sample $S_k$
		\If {condition \eqref{final} or condition \eqref{orthogonal-p} is not satisfied}
		\State Compute $|S_k|$ using \eqref{heuristic-samplesize} and choose a new sample $S_k$
		\EndIf
		\If {$\left(|S_k|=|S_{k-1}|=\cdots = |S_{k-r}|\right)$ } 
			\State Compute the running average gradient $g_{\rm avg}$ given in \eqref{running-avg}
			\If {$\|g_{\rm avg}\| < \gamma \|\nabla F_{S_k}(x_k)\|$}
				\If {condition \eqref{final} or condition \eqref{orthogonal-p} is not satisfied using $g_{\rm avg}$ instead of $\nabla F_{S_k}(x_k)$}
					\State Compute $|S_k|$ using \eqref{backup-samplesize} and choose a new sample $S_k$
				\EndIf			
			\EndIf
		\EndIf
	\end{algorithmic}
\end{algorithm}

\bigskip
Let us now discuss the choice of the parameters $\theta$ and $\nu$ that govern the inner product and orthogonality tests. Both have statistical significance, and have considerable impact on the algorithm. For example,  $\theta$ affects the value of the steplength $\alpha$ in the analysis presented in the previous section (see \eqref{c-stepform}), and consequently the linear convergence constant (see \eqref{mualpha}, \eqref{parsl}).

Concerning the parameter $\theta$  in the inner product test \eqref{final}, we know that the sample mean is a random quantity that  approximately follows a normal distribution, if a significant number of samples are used. In our case, this means,
\begin{equation}
\frac{\nabla F_{S_k}(x_k)^T\nabla F(x_k) - \|\nabla F(x_k)\|^2}{\left(\frac{\sigma}{\sqrt{|S_k|}}\right)} \sim \mathcal{N}(0,1),
\end{equation}
where $\sigma^2 =\E \left[\left( \nabla F_{i}(x_k)^T\nabla F(x_k) - \|\nabla F(x_k)\|^2 \right)^2\right] $ is the population variance employed in \eqref{inner-i}.

The one-sided $100(1 - \varrho)\%$ confidence interval for $\nabla F_{S_k}(x_k)^T\nabla F(x_k)$ is given by
\[
\left(\|\nabla F(x_k)\|^2 -z_{\varrho}\frac{\sigma}{\sqrt{|S_k|}}, \infty\right).
\]
Since our goal is to have $\nabla F_{S_k}(x_k)^T\nabla F(x_k)>0$,   we propose that the lower value be nonnegative, which implies
\[
\frac{\sigma^2}{|S_k|} \leq \frac{1}{z_{\varrho}^2}\|\nabla F(x_k)\|^4.
\] 
Therefore, we see from \eqref{final} that $\theta$ is  related to  $z_{\varrho}$. Smaller $\theta$ values increase the probability of obtaining a descent direction, and at the same time  promote larger sample sizes.  In particular, the value $\theta =0.9$  corresponds to a probability of  about $ 0.8$; we have observed that this value works well in our numerical tests. 
 
Let us now consider the choice of $\nu$, which governs the orthogonality test \eqref{orthogonal-p}.  As discussed in the paragraph that follows \eqref{orthogonal-p},  $\nu$ affects the tangent of the angle $\chi_k$
  between $\nabla F_{S_k}(x_k)$ and $\nabla F(x_k)$. In practice, we choose $\nu$ to be sufficiently large such that the orthogonality test does not influence the sample size selection for most of the run, while at the same time ensuring that the angle $\chi_k$ is bounded away from $90^\circ$. This reasoning leads us to the choice $\nu =\tan 80^\circ = 5.84$. Although in theory one can choose much larger values of $\nu$,  this value works well in our tests --- but like $\theta$, $\nu$ should be regarded as a tunable parameter in the algorithm.

\section{Numerical Experiments}   \label{numerical}
In this section, we report the results of numerical experiments that illustrate the performance of the method proposed in this paper. We consider binary classification problems where the objective function is given by the logistic loss with $\ell_2$ regularization:
\begin{equation} \label{logistic-loss}
R(x)=\frac{1}{N} \sum_{i=1}^{N}\log(1 + \exp(-z^ix^Ty^i)) + \frac{\lambda}{2}\|x\|^2, \quad\mbox{with}  \quad \lambda = \frac{1}{N} .
\end{equation} 
 We used the datasets listed in Table~\ref{tab11}.
\begin{table}[htp]
	\centering
	\label{data sets}
	\begin{tabular}{|c||c|c|c|}
		\hline
		Data Set  & Data Points $N$ & Variables $d$ & Reference \\ \hline
		gisette   & 6000		   & 5000		 &			\cite{guyon2004result} \\
		synthetic & 7000          & 50          &           \cite{mukherjee2013parallel}\\
		mushroom & 8124          & 112         &          \cite{Lichman2013} \\
		sido     & 12678         & 4932         &          \cite{Lichman2013} \\
		RCV1     & 20242         & 47236         &          \cite{Lichman2013} \\
		ijcnn     & 35000         & 22         &          \cite{Lichman2013} \\    
		MNIST    & 60000         & 784         &      \cite{lecun2010mnist}     \\ 
		real-sim     & 65078         & 20958         &          \cite{Lichman2013} \\ 
		covertype & 581012       & 54          &           \cite{blackard1999comparative}\\ \hline
	\end{tabular}
	\caption{Characteristics of the binary datasets used in the experiments.}
	\label{tab11}
\end{table} 

Our goal is to compare the performance of the \emph{augmented inner product test} implemented in Algorithm~\ref{alg:complete}, which imposes both \eqref{final} and \eqref{orthogonal-p}, and the norm test \eqref{dss-sample-test}, which as mentioned above, dates back to Carter \cite{carter} and has  recently received much attention.
 The practical form of the norm test is  
\begin{equation} \label{final-dss}
\frac{1}{|S_k| - 1} \frac{\sum_{i \in S_k}\|\nabla F_i(x_k) - \nabla F_{S_k}(x_k)\|^2}{|S_k|} \leq \theta^2 \|\nabla F_{S_k}(x_k)\|^2 .
\end{equation}

An approximation $R^*$ of the optimal function value was computed for each problem  by running the L-BFGS method until $\|\nabla R(x_k)\|_{\infty} \leq 10^{-8}$. For all the experiments, we provide plots with respect to effective gradient evaluations and iterations. Effective gradient evaluations represent the total number of equivalent full gradients $\nabla R$ and full functions $R$ computed in a run. All runs were terminated when $\|\nabla R(x_k)\|_{\infty} \leq 10^{-6}$ or if 100 epochs (passes through the whole data set) are performed. We observed that for the finite sum problem \eqref{logistic-loss}, best performance is attained when the samples are chosen at random \emph{without} replacement. 
 
\subsection{Tests with Fixed Steplengths}
We first test the algorithm using a constant steplength rather than the  line search (Algorithm~\ref{alg:lip}) in order to evaluate the adaptive sampling strategies in a simple setting. Using a fixed steplength $\alpha_k=\alpha$, and tuning it for each problem so as to achieve best performance is a popular strategy in machine learning. In our experiments we select  $\alpha$ from the set $\{2^{-10}, 2^{-7}, \cdots, 2^{15}\}$. All other parameters of the algorithms are set as follows:  $\theta = 0.9$, $\nu = \tan(80^o)=5.84$, $r=10$, $\gamma =0.38$. The initial sample size was set to $|S_0| =2$.

Figure~\ref{synthetic-ex1funcvals} reports the performance of the two methods for the {\tt synthetic} dataset. The vertical axis measures the error in the function, $R(x) - R^*$, and the horizontal axis  the number of effective gradient evaluations or iterations.  (Results for the other datasets are given in Appendix~\ref{addnumerical}.) For this problem, the best steplength for both methods was $\alpha= 2^0$.  The inner product test is clearly more efficient in terms of effective gradient evaluations, which is indicative of the total computational work and CPU time, but  the norm  test requires fewer iterations for reasons discussed in the next experiment. 

\begin{figure}[!htp]
	\begin{centering}
		\includegraphics[width=0.45\linewidth]{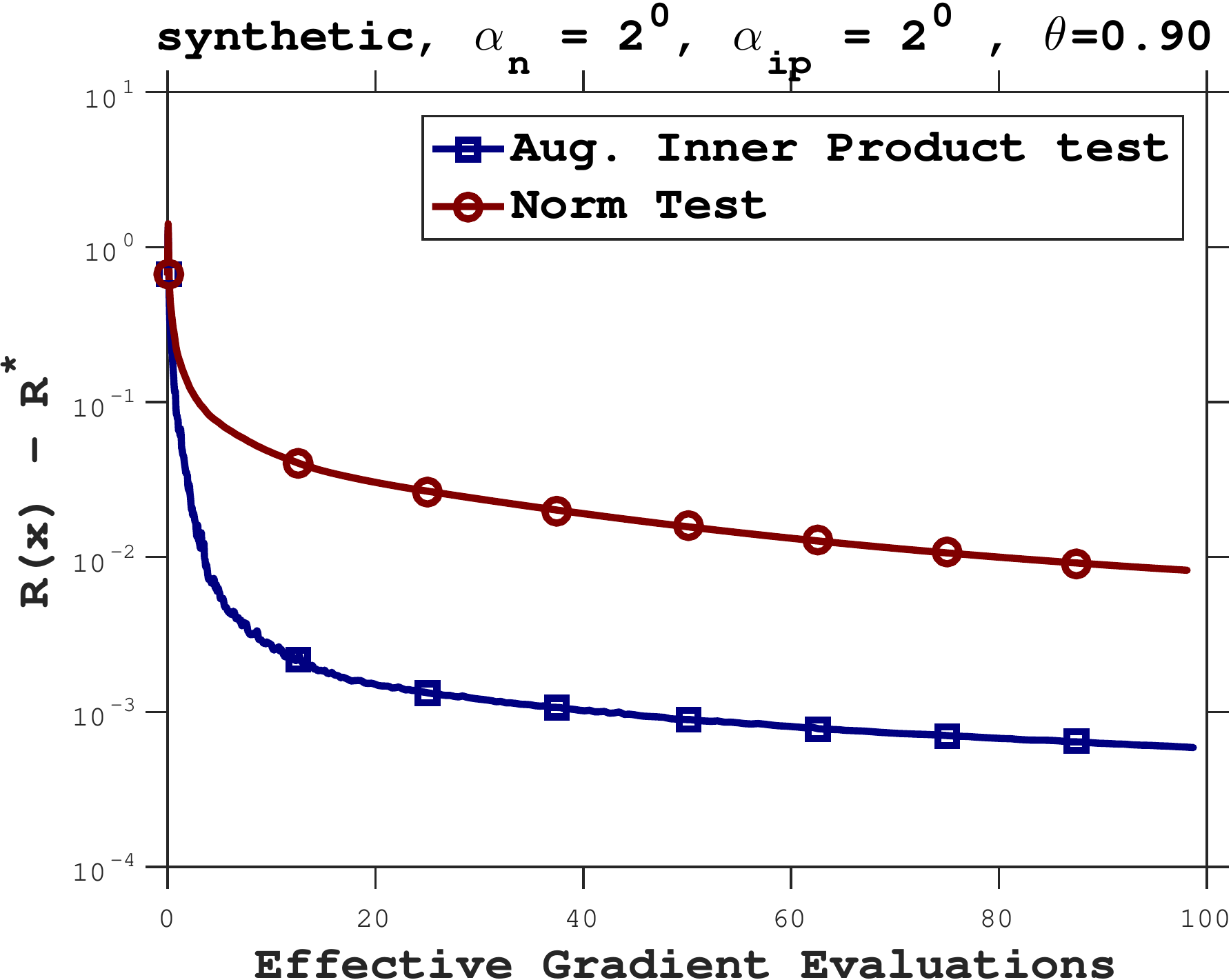}
		\includegraphics[width=0.45\linewidth]{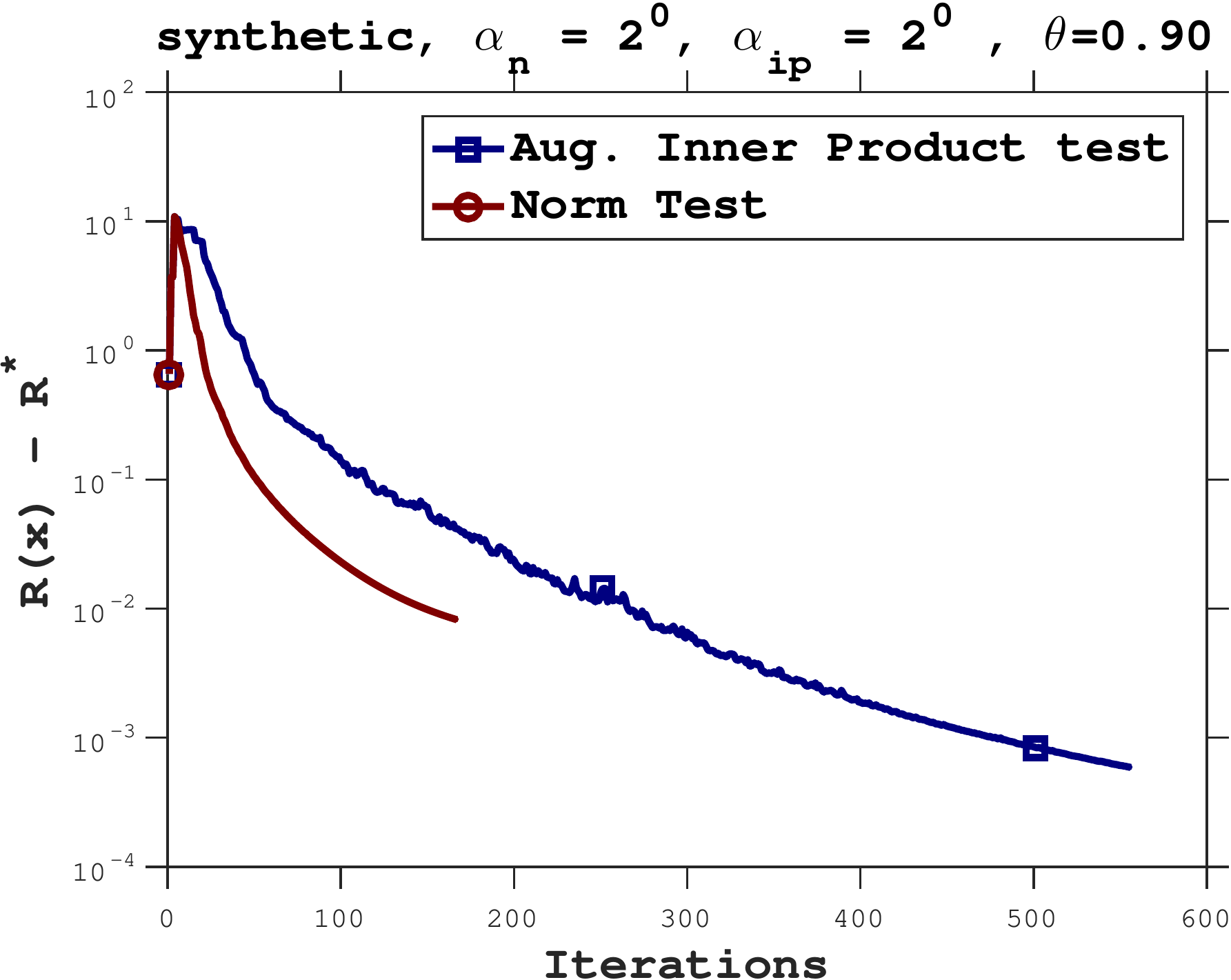}
		\par\end{centering}
	\caption{ {\tt \large synthetic} dataset: Performance of the augmented inner product test and norm test.
	 Left: Function error vs. effective gradient evaluations; Right: Function error vs. iterations.} 
	\label{synthetic-ex1funcvals} 
\end{figure}

In Figure~\ref{synthetic-ex1batchsizes}, we plot sample size $|S_k|$ (as a percentage of the total dataset) at each iteration. We observe that the augmented inner product test increases the sample size slower than the norm test. The latter approximates the full gradient method earlier on, explaining the smaller number of iterations observed in Figure~\ref{synthetic-ex1funcvals}.

\begin{figure}[htp!]
	\begin{centering}
		\includegraphics[width=0.5\linewidth]{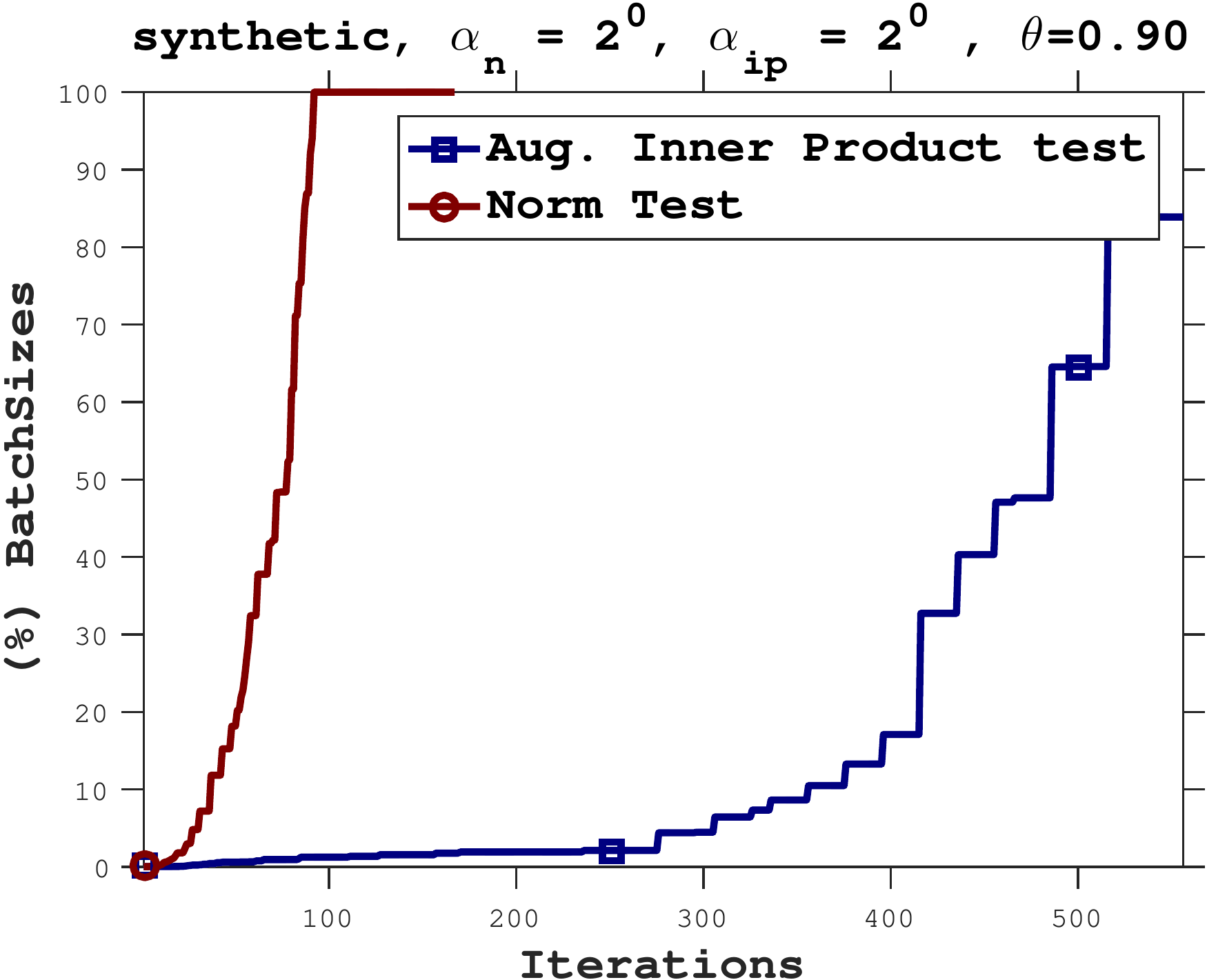}
		\par\end{centering}
	\caption{ {\tt \large synthetic} dataset: Growth of the sample (batch) size $|S_k|$ for the two methods, as a function of iterations. } 
	\label{synthetic-ex1batchsizes} 
\end{figure}

We also plot, in Figure \ref{synthetic-ex1angles},  the angle (in degrees) between the sampled gradient $\nabla F_{S_k} (x_k)$ and true gradient $\nabla F(x_k)$, at each iteration. 
Note that the norm test rapidly reduces the angle between these two vectors, whereas the augmented inner product test allows for more freedom, with the angle varying significantly, but averaging about $70^\circ$. 
Results on the remaining data sets are given in the Appendix~\ref{addnumerical}. 



\begin{figure}[htp]
	\begin{centering}
		\includegraphics[width=0.5\linewidth]{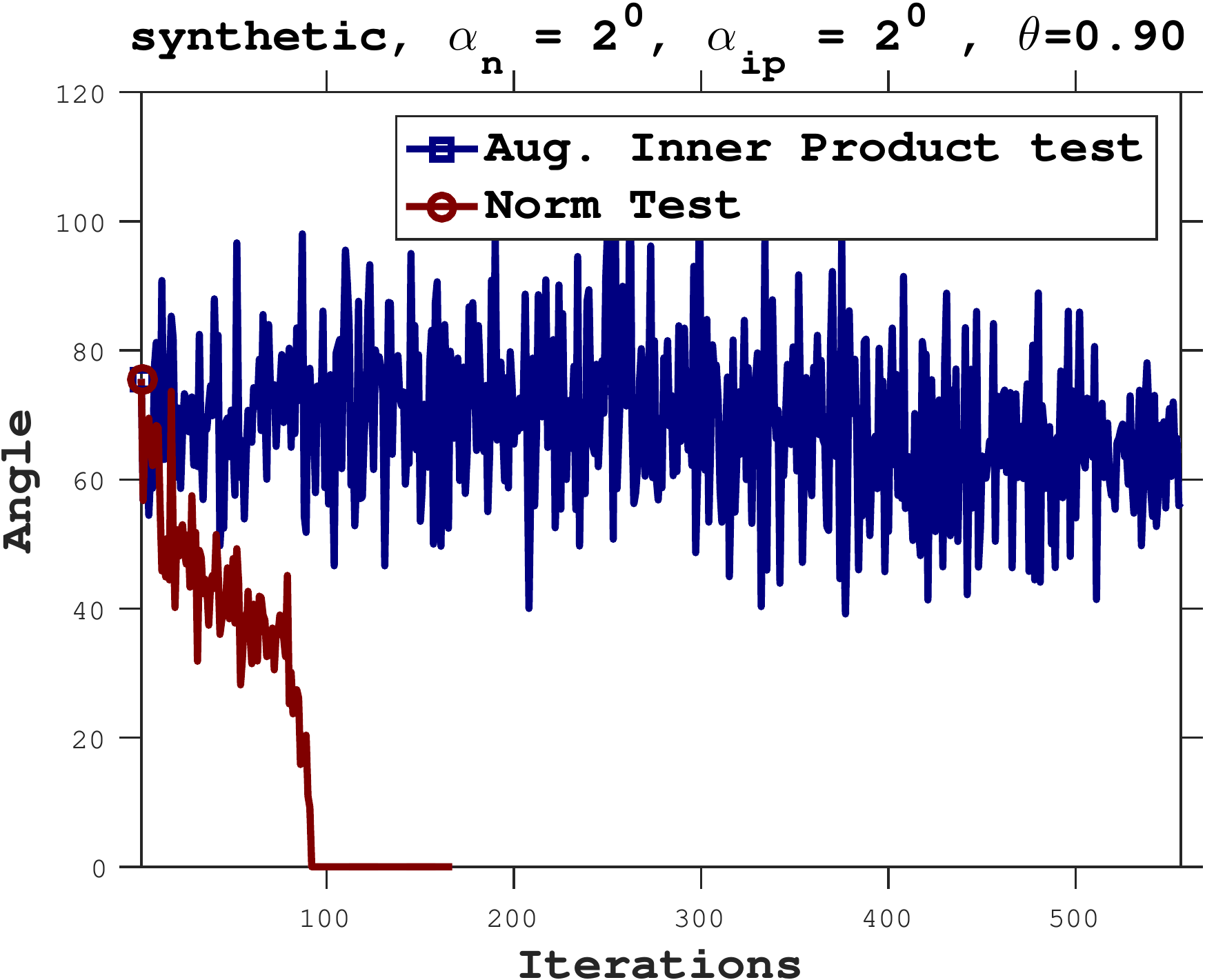}
		\par\end{centering}
	\caption{ {\tt \large synthetic} dataset: Angle between sampled gradient and true gradient vs. iterations for the two methods.} 
	\label{synthetic-ex1angles} 
\end{figure}

\subsection{Performance of the Complete Algorithm}

We  now compare the performance of Algorithm~\ref{alg:complete} against the variant that employs the norm test \eqref{final-dss}. 
Both utilize the line search procedure described in Algorithm~\ref{alg:lip}.  As before, we set  $\theta = 0.9, \nu =5.84, r=10, \gamma =0.38$ . In Algorithm~\ref{alg:lip}, the initial estimate of the Lipschitz constant is $ L_0=1$, and we use $\eta=1.5$; we observed that these two values give  good performance for all our test problems.

The plots in Figure~\ref{synthetic-ex2funcvals} report function error vs. effective gradient evaluations, and  batch sizes vs. iterations, respectively. Effective gradient evaluations include the function evaluations during the line search procedure, i.e., Algorithm~\ref{alg:lip}. We observe that the augmented inner product test outperforms the norm test and increases the sample sizes  less aggressively.

\begin{figure}[!htp]
	\begin{centering}
		\includegraphics[width=0.45\linewidth]{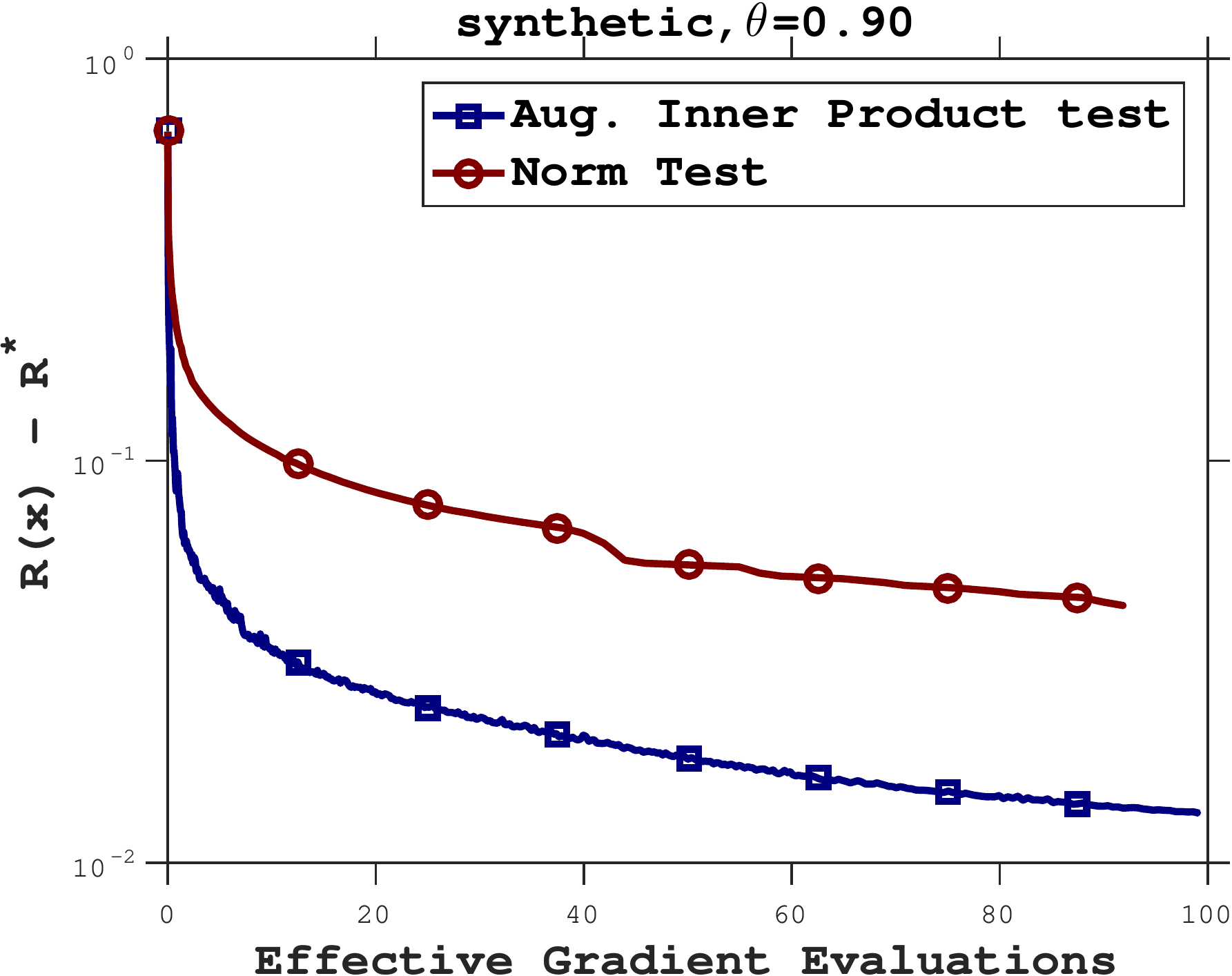}
		\includegraphics[width=0.45\linewidth]{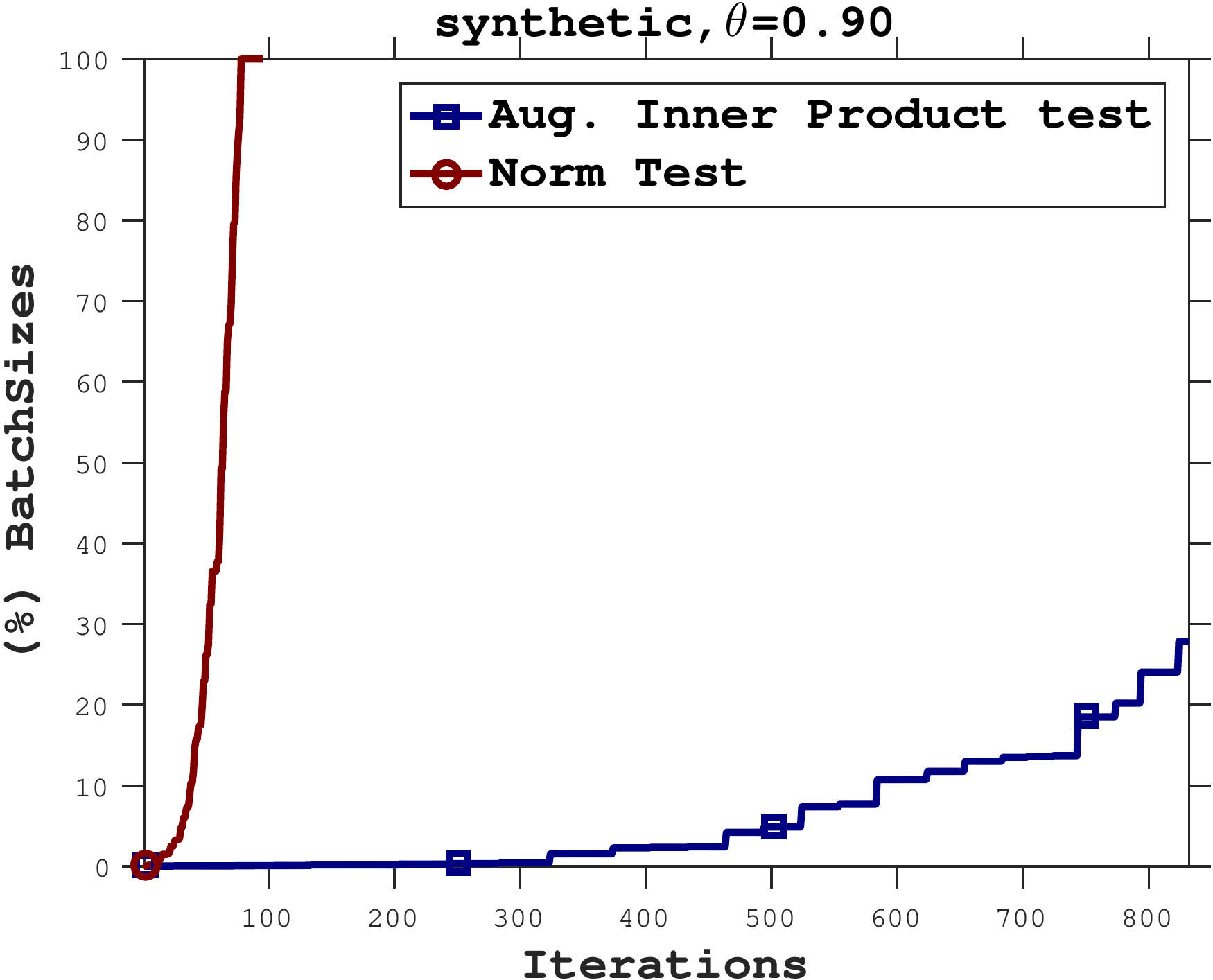}
		\par\end{centering}
	\caption{ {\tt \large synthetic} dataset:  Performance of Algorithm~\ref{alg:complete} using the augmented inner product test and using the norm test. Left: Function error vs effective gradient evaluations; Right: Batch size $|S_k|$ vs. iterations.} 
	\label{synthetic-ex2funcvals} 
\end{figure}

To illustrate the performance of the line search procedure, we report in Figure~\ref{synthetic-ex2steps}  the steplengths chosen at each iteration by the two versions of the algorithm. As a reference, we also plot the best value ($\alpha=1$) for the case when a fixed steplength is used. We observe that Algorithm~\ref{alg:complete}, using the augmented inner product test,  produced smaller steplengths initially, and that they were increased gradually until they approached 1.
\begin{figure}[!htp]
	\begin{centering}
		\includegraphics[width=0.5\linewidth]{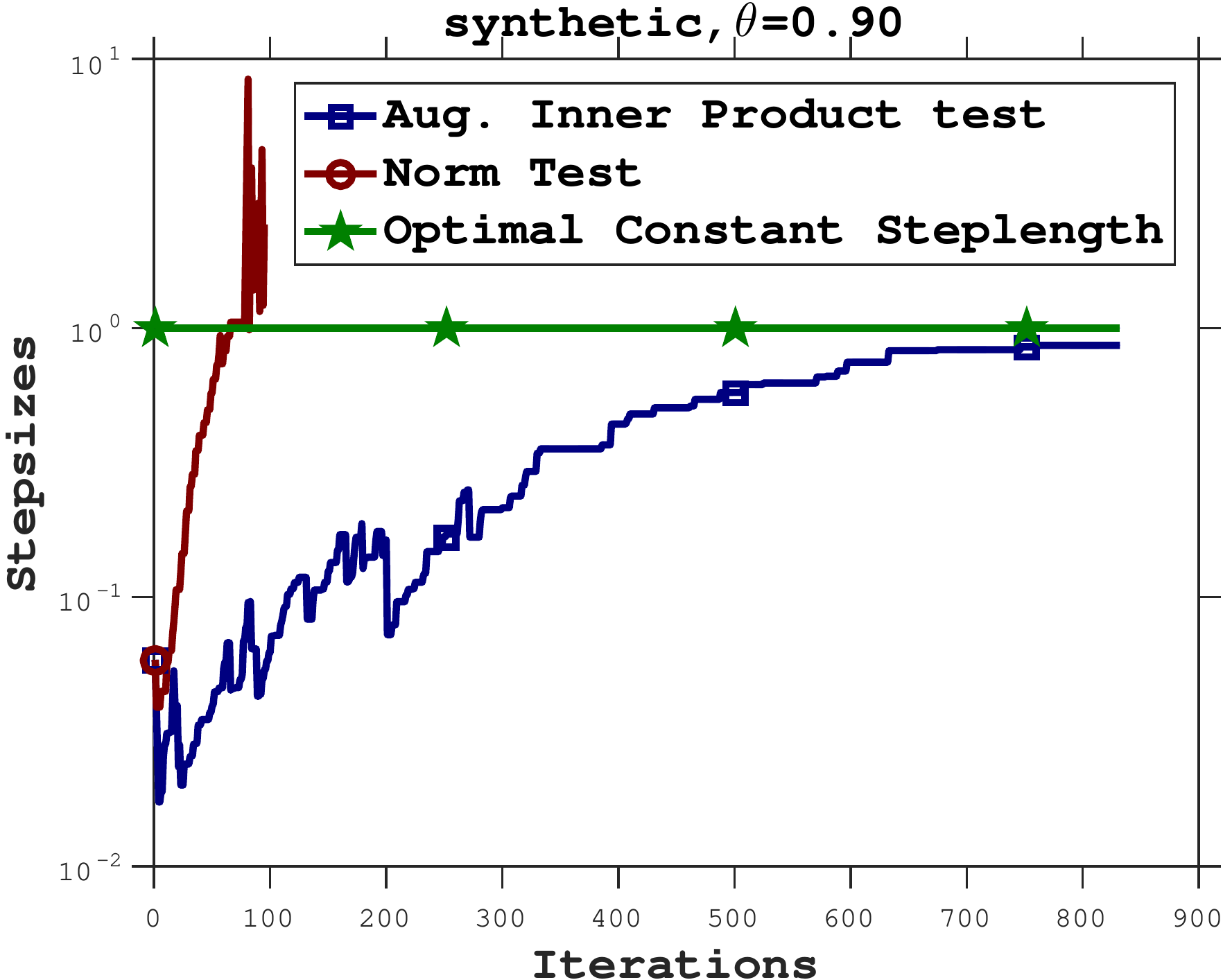}
		\par\end{centering}
	\caption{ {\tt \large synthetic} dataset: Performance of Algorithm~\ref{alg:complete} using the inner product test and using the norm test. Stepsizes vs. iterations} 
	\label{synthetic-ex2steps} 
\end{figure}

 The results in Appendix~\ref{addnumerical} show the same pattern as in the {\tt synthetic} dataset: Algorithm~\ref{alg:complete}  using the inner product test is more efficient, and increases the sample size more gradually, than the norm test.

\section{Final Remarks }   \label{sec:final}
Stochastic optimization algorithms that increase the sample size during the course of a run are interesting from a practical and theoretical perspective. We regard them as one of the most promising variance reducing methods for optimization applications involving very large data sets, as is the case in machine learning. Complexity analysis \cite{byrd2012sample} has shown that adaptive sampling algorithms can be efficient in terms of total computational work. However, implementing them in practice has been challenging because a natural criterion (\emph{the norm test}) for determining when gradient approximations should be improved  is difficult to control and often leads to inefficient behavior. In this paper, we propose a different strategy that provides more freedom in the choice of the search direction; we refer to it as the \emph{inner product test}. We argue that this technique is algorithmically appealing, and is more efficient in practice than the classical norm test strategy.

 In this paper, we considered  a first-order method but our methodology is amenable to the development of second-order methods \cite{friedlander2012hybrid,roosta2016sub,bollapragada2016exact}. An interesting open question is how to employ our inner product test in sub-sampled Newton and quasi-Newton methods for stochastic optimization. More generally, we believe that the concept underlying the inner product test  can be the basis for new variance reducing optimization algorithms. 

\bigskip\noindent{\em Acknowledgement.} We thank Albert Berahas for  many useful suggestions on how to improve the manuscript.

\bibliographystyle{plain}
\bibliography{Adasample}

\newpage
\appendix

\section{Proof of equations \eqref{beta1}-\eqref{beta2}}
\begin{proof}
	The numerator of the left hand side term in the condition \eqref{inner-i} can be simplified by expanding the variance term and the inner product. That is,
	\begin{align*} \label{scvr-innerproduct}
	\E \left[\left( \nabla F_{i}(x)^T\nabla F(x_k) - \|\nabla F(x)\|^2 \right)^2\right] &= \E \left[\left( \nabla F_{i}(x)^T\nabla F(x)\right)^2\right] -  \|\nabla F(x)\|^4 \\
	&= \E \left[\|\nabla F_i(x)\|^2\cos^2(\chi_i)\right]\|\nabla F(x)\|^2 -  \|\nabla F(x)\|^4 .
	\end{align*} 
	 Substituting this equality in the inner product condition (\ref{inner-i}) we obtain, 
	\begin{equation} \label{scvr-cosine}
	\frac{\E \left[\|\nabla F_i(x)\|^2\cos^2(\chi_i)\right] -  \|\nabla F(x)\|^2}{|S|} \leq  \theta^2 \|\nabla F(x)\|^2.
	\end{equation}
	Let$|S_i|$ be the minimum number of samples such that this condition is satisfied. Therefore,
	\begin{align*}
	|S_i| &= \frac{\E \left[\|\nabla F_i(x)\|^2\cos^2(\chi_i)\right] -  \|\nabla F(x)\|^2}{\theta^2 \|\nabla F(x)\|^2}.
	\end{align*} 
	Now, we consider the norm test \eqref{dss-sample-test} and expand the variance in the numerator of the left hand side of the inequality to obtain
	\begin{equation} \label{dss}
	\frac{\E \left[\|\nabla F_i(x)\|^2\right] -  \|\nabla F(x)\|^2}{|S|} \leq  \theta^2 \|\nabla F(x)\|^2.
	\end{equation}
	If  $|S_n|$ is the minimum number of samples such that this condition is satisfied, we have
	\begin{align*}
	|S_n| &= \frac{\E \left[\|\nabla F_i(x)\|^2\right] -  \|\nabla F(x)\|^2}{\theta^2 \|\nabla F(x)\|^2}.
	\end{align*} 
	Therefore,
	\begin{equation} \label{ratio}
	\frac{|S_{i}|}{|S_{n}|} = \frac{\E[\|\nabla F_{i}(x)\|^2 \cos^2(\chi_i)]- \|\nabla F(x)\|^2}{\E[\|\nabla F_{i}(x)\|^2] - \|\nabla F(x)\|^2} 
	\end{equation}
	Now since,
	\begin{equation*}
	\E[\|\nabla F_{i}(x)\|^2 \cos^2(\chi_i)] \leq \E[\|\nabla F_{i}(x)\|^2],
	\end{equation*} 
	we have that $\beta(x) \leq 1$.
\end{proof}

\newpage
\section{Additional Numerical Results} \label{addnumerical}
We present numerical results for the rest of the datasets listed in Table~\ref{tab11}.
\subsection{Tests with Fixed Steplengths}
First, we consider the use of a fixed steplength parameter $\alpha$, which is selected for each problem and each method so as to give optimal performance. In the figures that follow, $\alpha_n, \alpha_{ip}$ denote the steplengths used in conjunction with the norm and inner product tests, respectively. In all experiments the parameter $\theta$  in \eqref{final} and \eqref{final-dss} was set to  $\theta =0.9$, and the parameter $\nu$ in \eqref{orthogonal-p} was set to $\nu = \tan80^\circ=5.84$. 
\begin{figure}[H]
	\begin{centering}
		\includegraphics[width=0.3\linewidth]{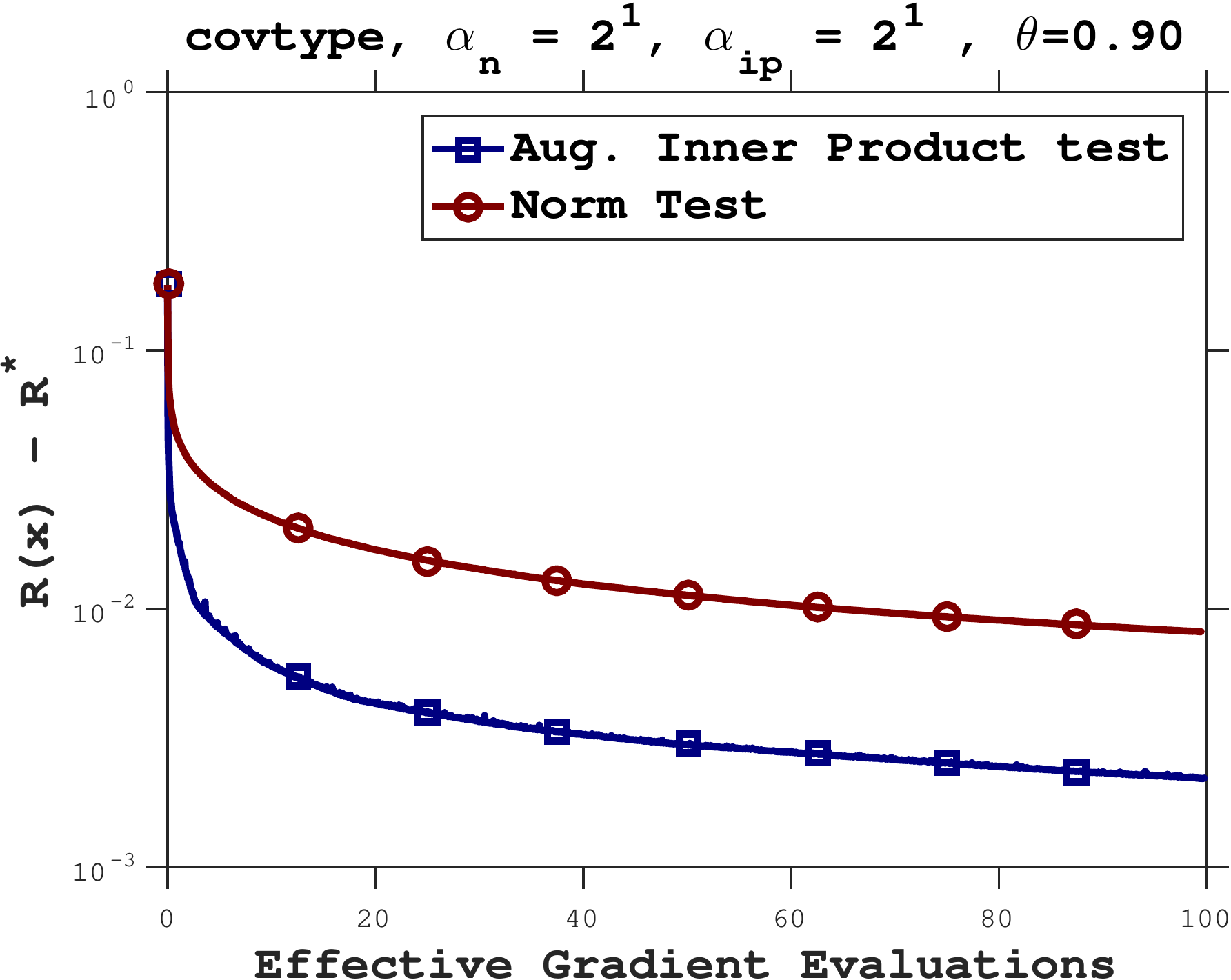}
		\includegraphics[width=0.3\linewidth]{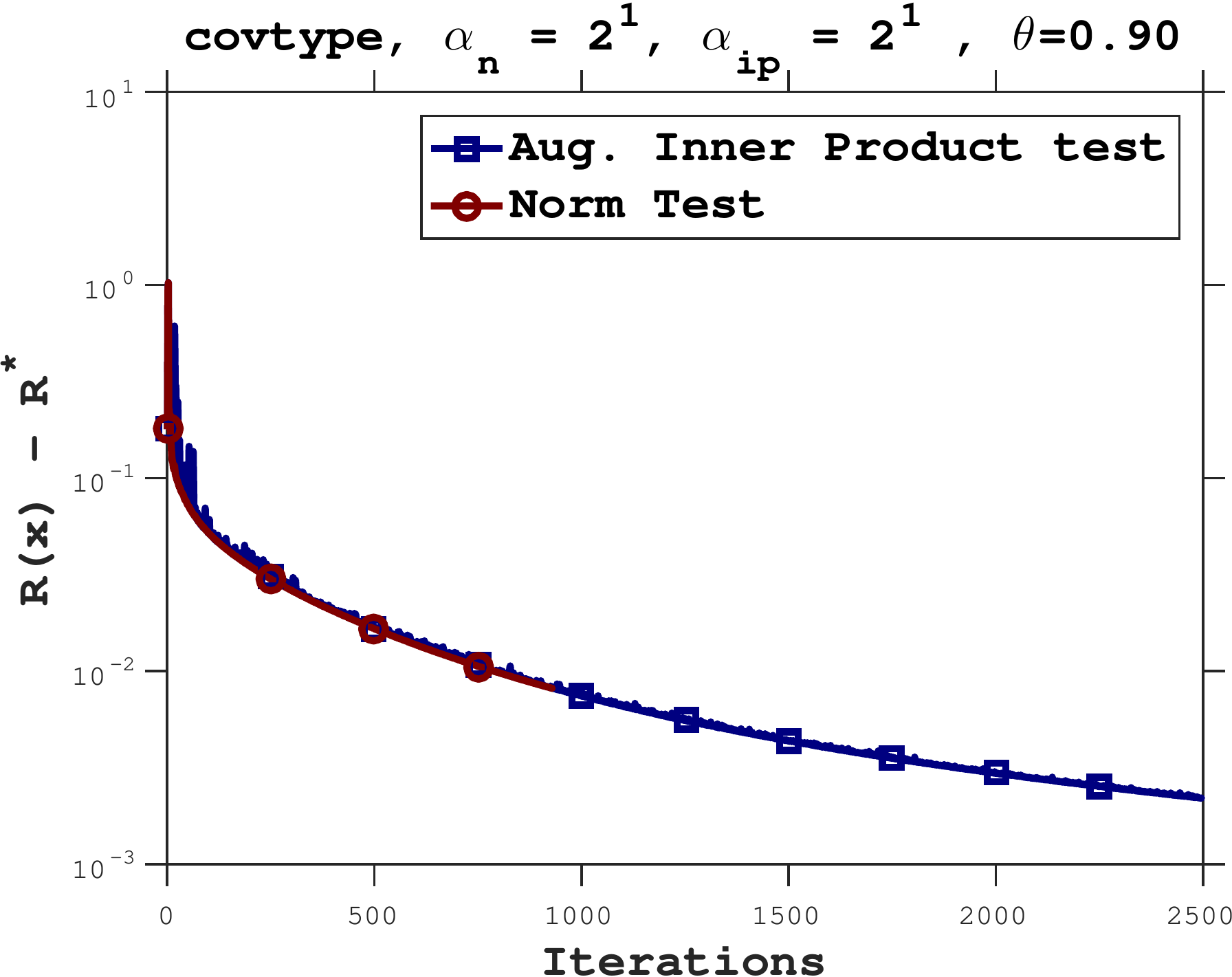}
		\includegraphics[width=0.3\linewidth]{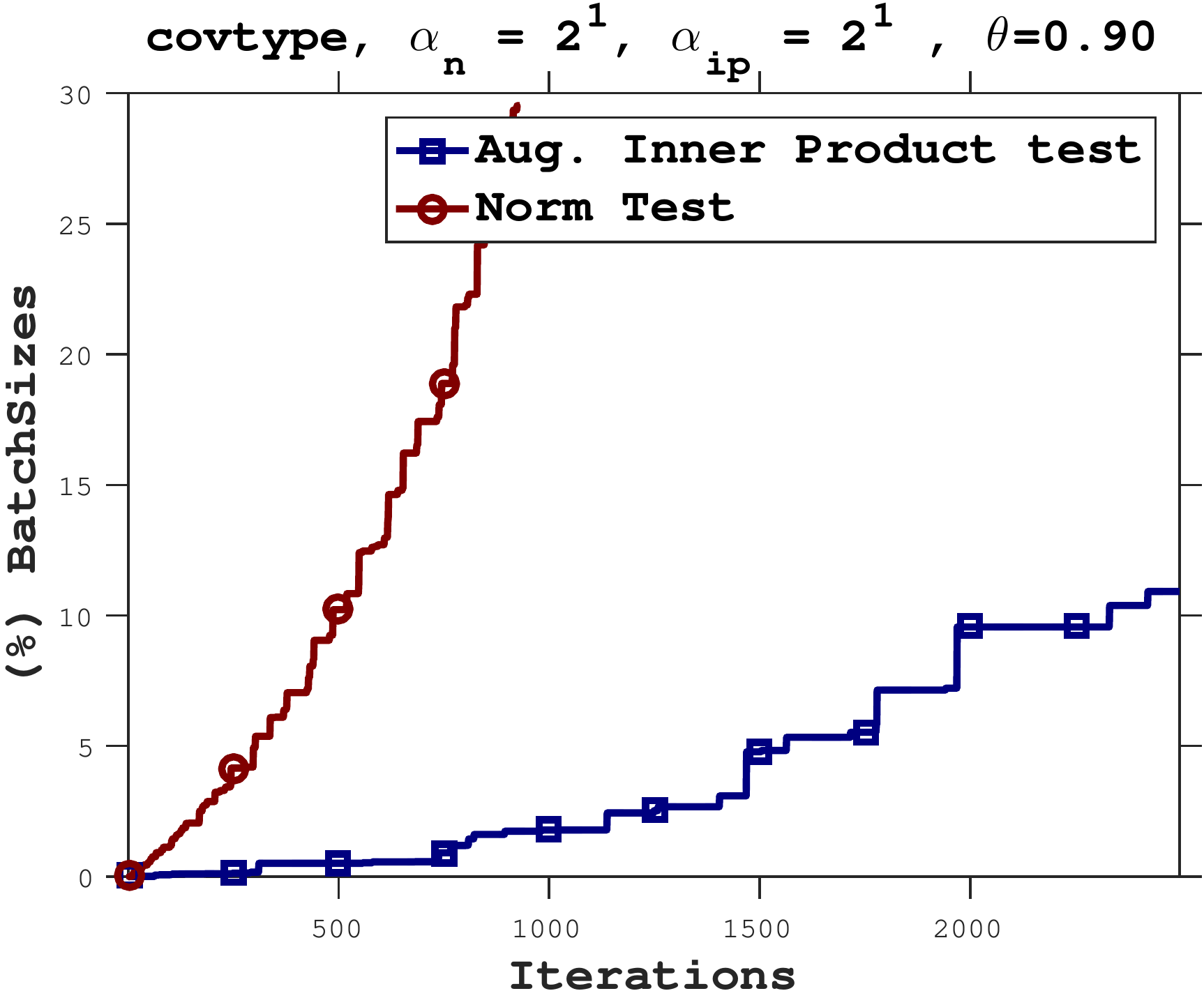}
		\par\end{centering}
	\caption{ {\tt covertype} dataset: Performance of the adaptive sampling algorithm using the augmented inner product test and the norm test. The steplengths were chosen as $\alpha_{n} =2^{1}$, $\alpha_{ip} =2^{1}$. Left: Function error vs. effective gradient evaluations; Middle: Function error vs. iterations; Right: Batch size $|S_k|$ vs. iterations.} 
	\label{covtype-expmnt1} 
\end{figure}

\begin{figure}[H]
	\begin{centering}
		\includegraphics[width=0.3\linewidth]{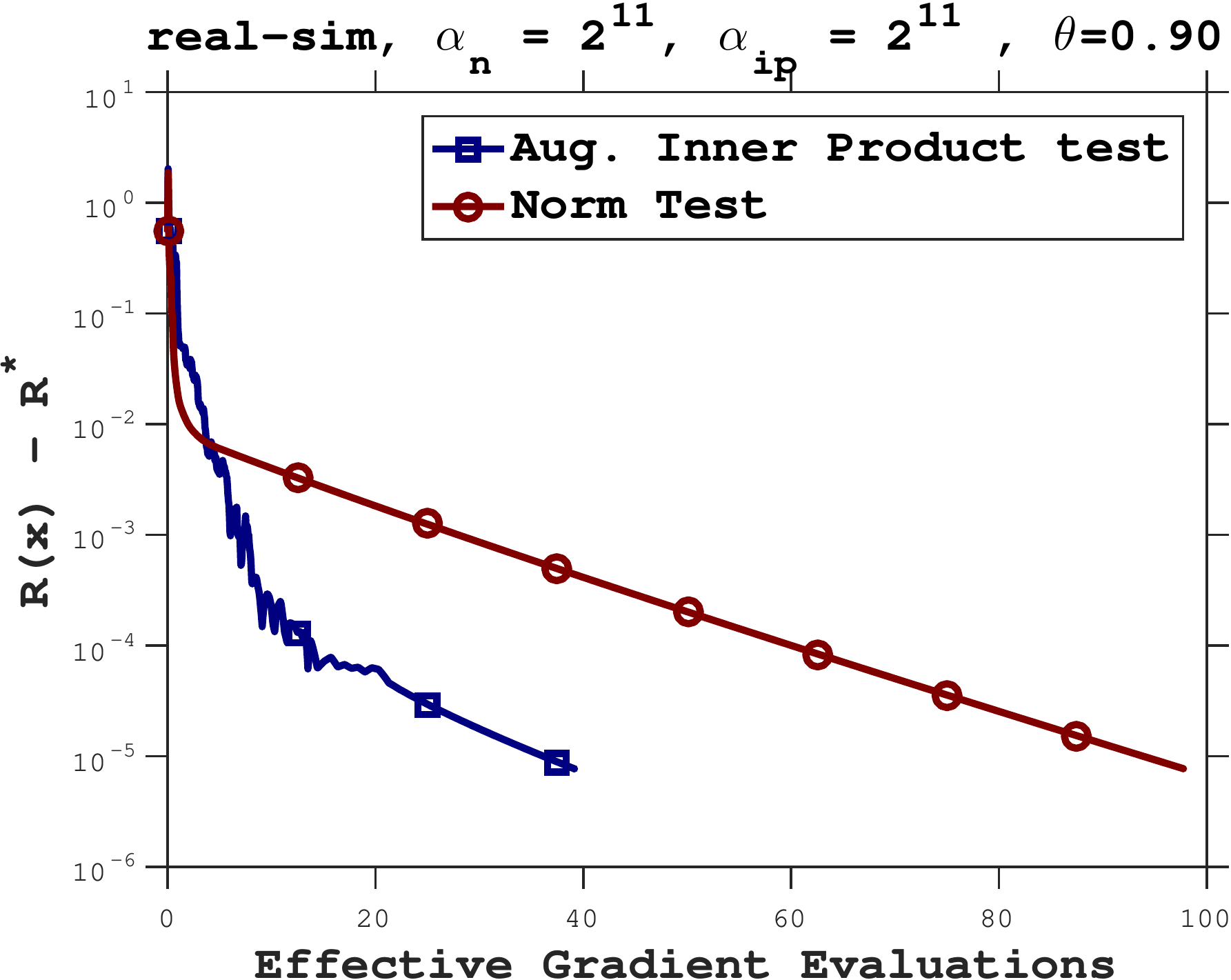}
		\includegraphics[width=0.3\linewidth]{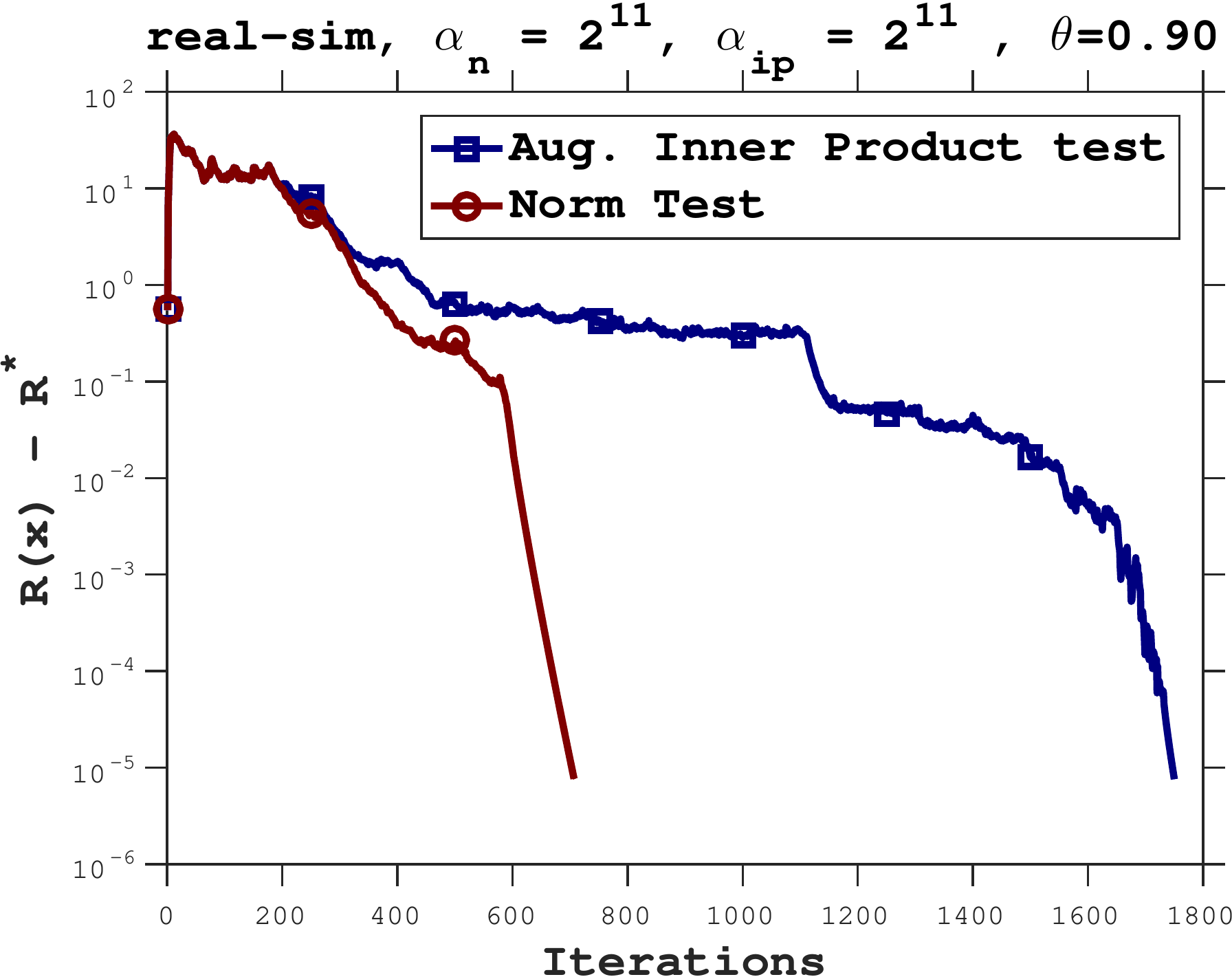}
		\includegraphics[width=0.3\linewidth]{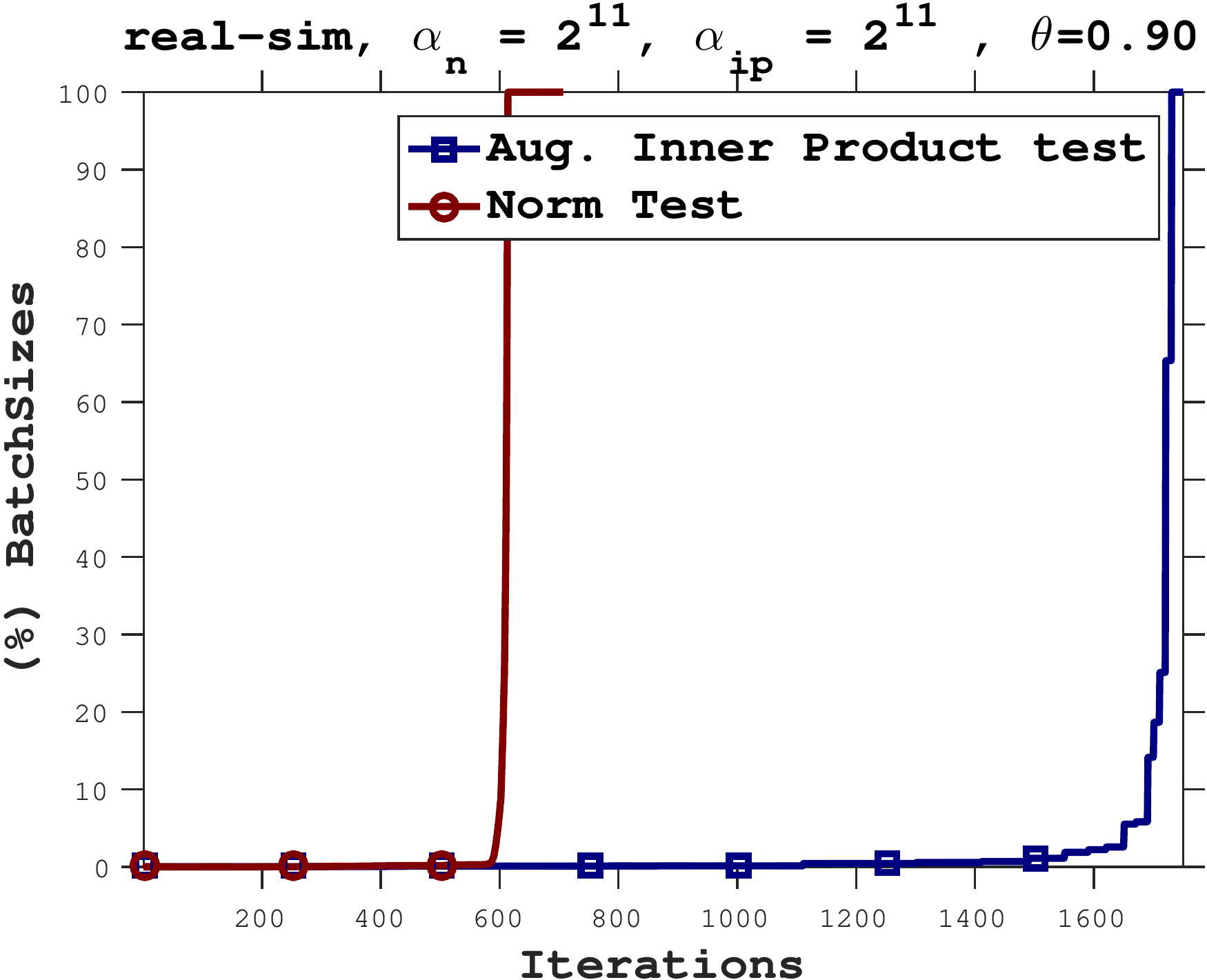}
		\par\end{centering}
	\caption{ {\tt real-sim} dataset: Performance of the adaptive sampling algorithm using the augmented inner product test and the norm test. The steplengths were chosen as $\alpha_{n} =2^{11}$, $\alpha_{ip} =2^{11}$.  Left: Function error vs. effective gradient evaluations; Middle: Function error vs. iterations; Right: Batch size $|S_k|$ vs. iterations.} 
	\label{real-sim-expmnt1} 
\end{figure}
\begin{figure}[H]
	\begin{centering}
		\includegraphics[width=0.3\linewidth]{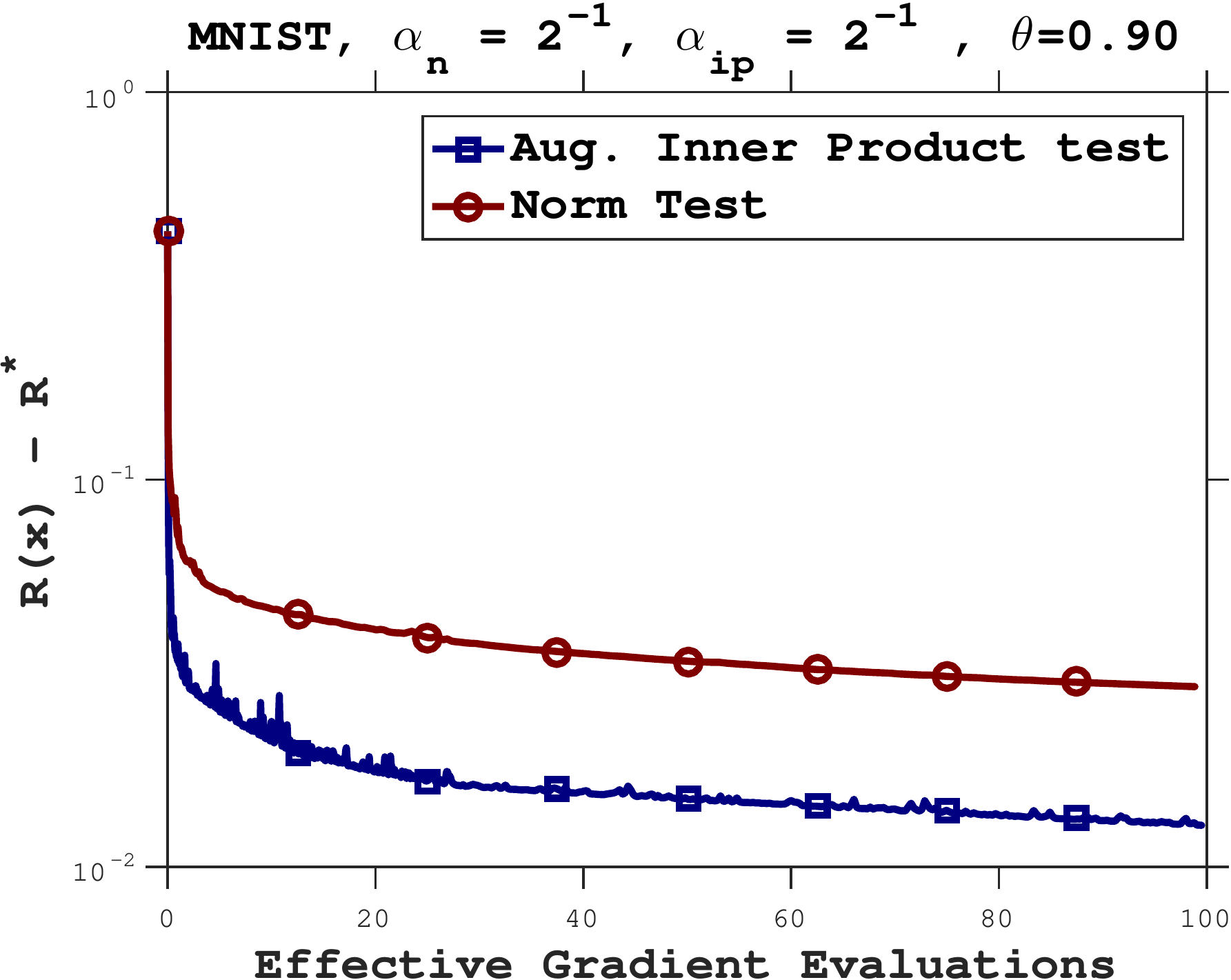}
		\includegraphics[width=0.3\linewidth]{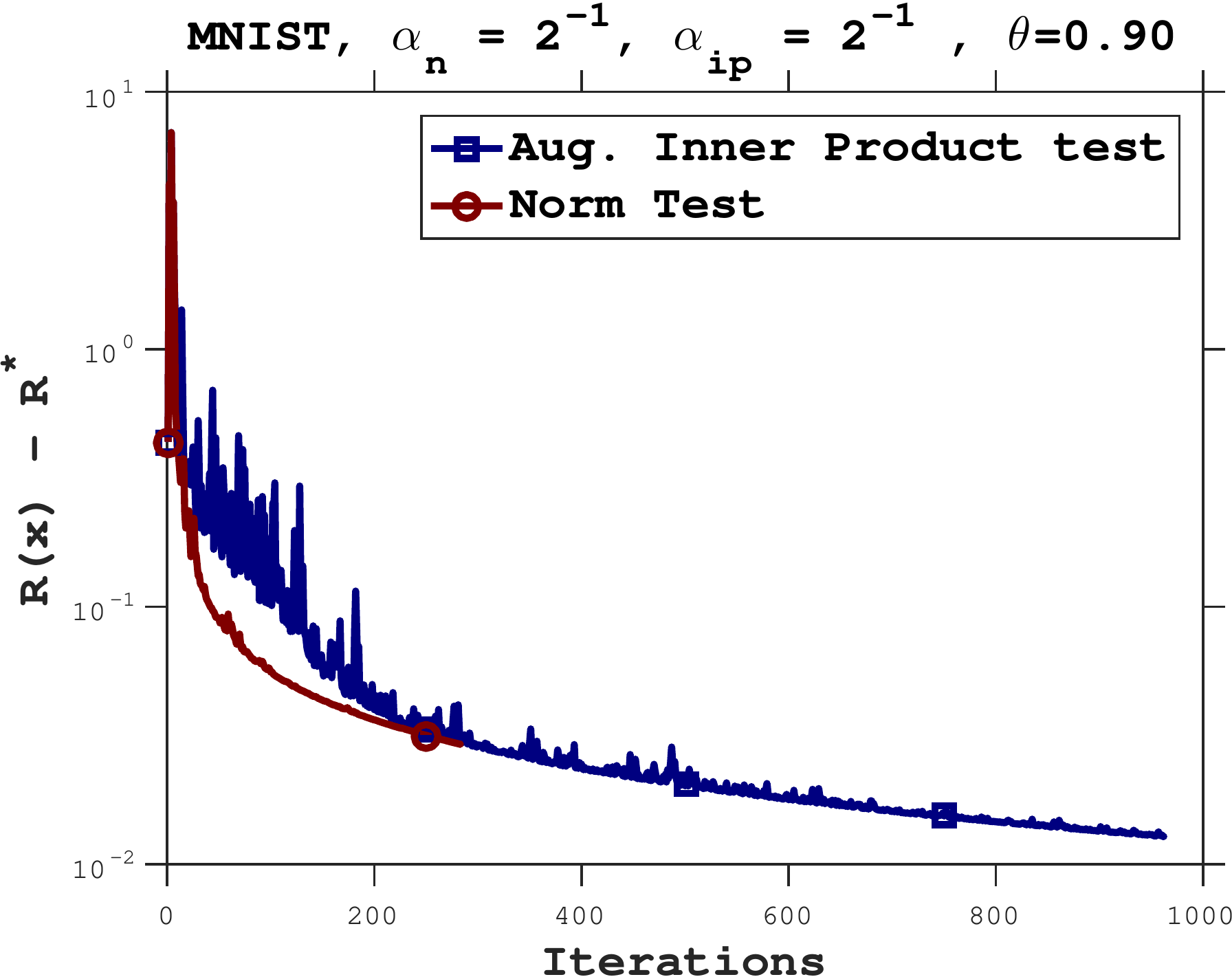}
		\includegraphics[width=0.3\linewidth]{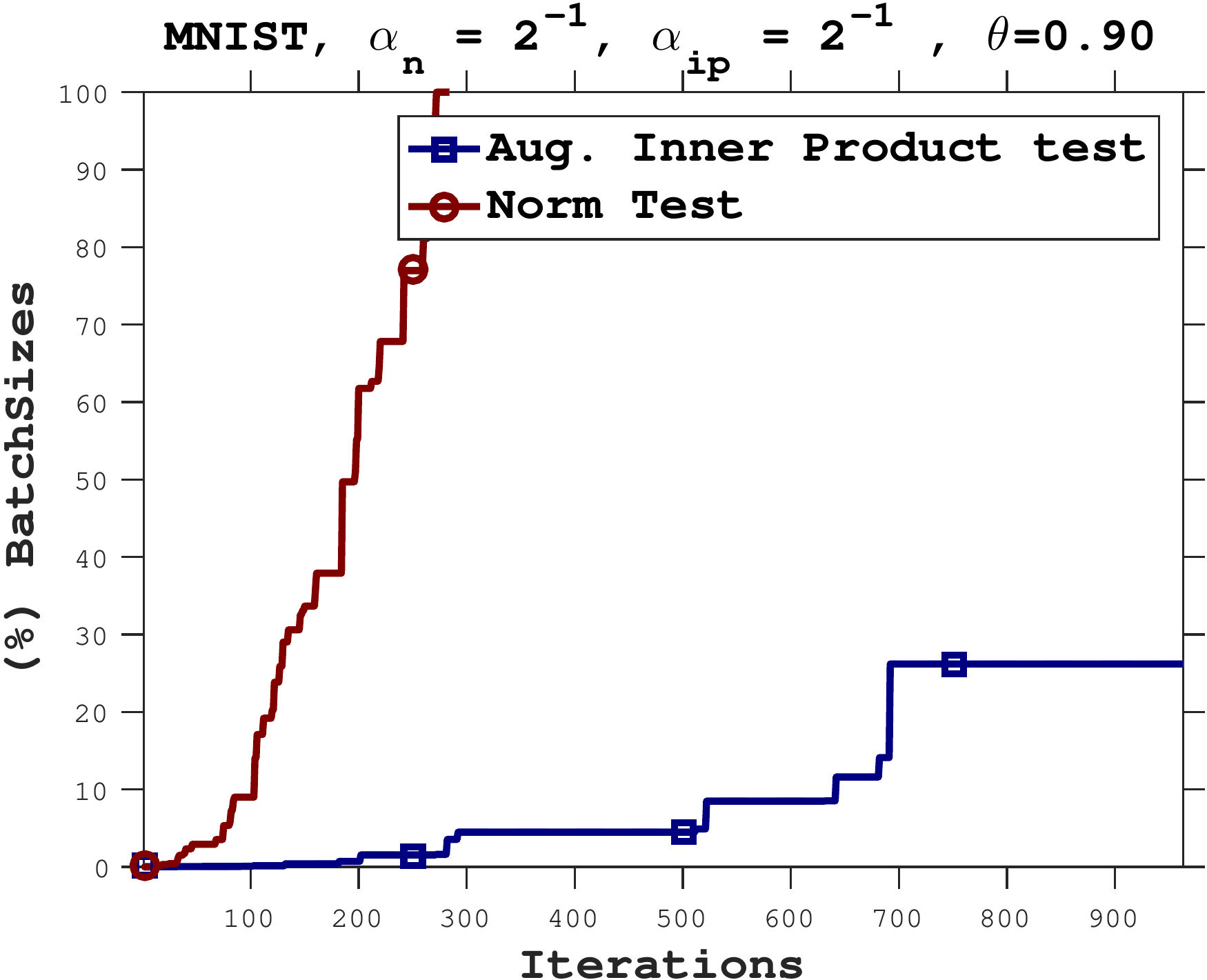}
		\par\end{centering}
	\caption{ {\tt MNIST} dataset: Performance of the adaptive sampling algorithm using the augmented inner product test and the norm test. The steplengths were chosen as  $\alpha_{n} =2^{-1}$, $\alpha_{ip} =2^{-1}$. Left: Function error vs. effective gradient evaluations; Middle: Function error vs. iterations; Right: Batch size $|S_k|$ vs. iterations.} 
	\label{MNIST-expmnt1} 
\end{figure}
\begin{figure}[H]
	\begin{centering}
		\includegraphics[width=0.3\linewidth]{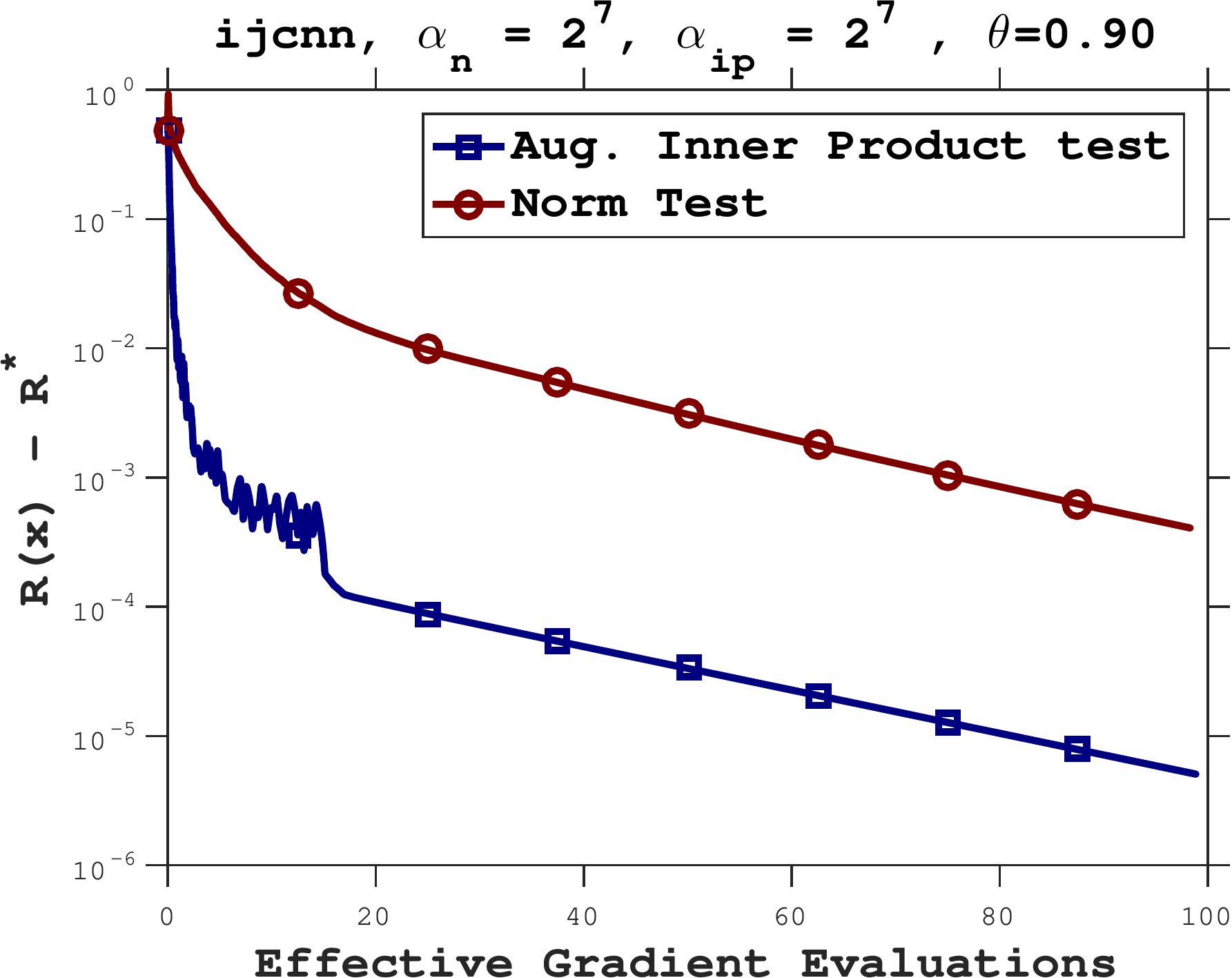}
		\includegraphics[width=0.3\linewidth]{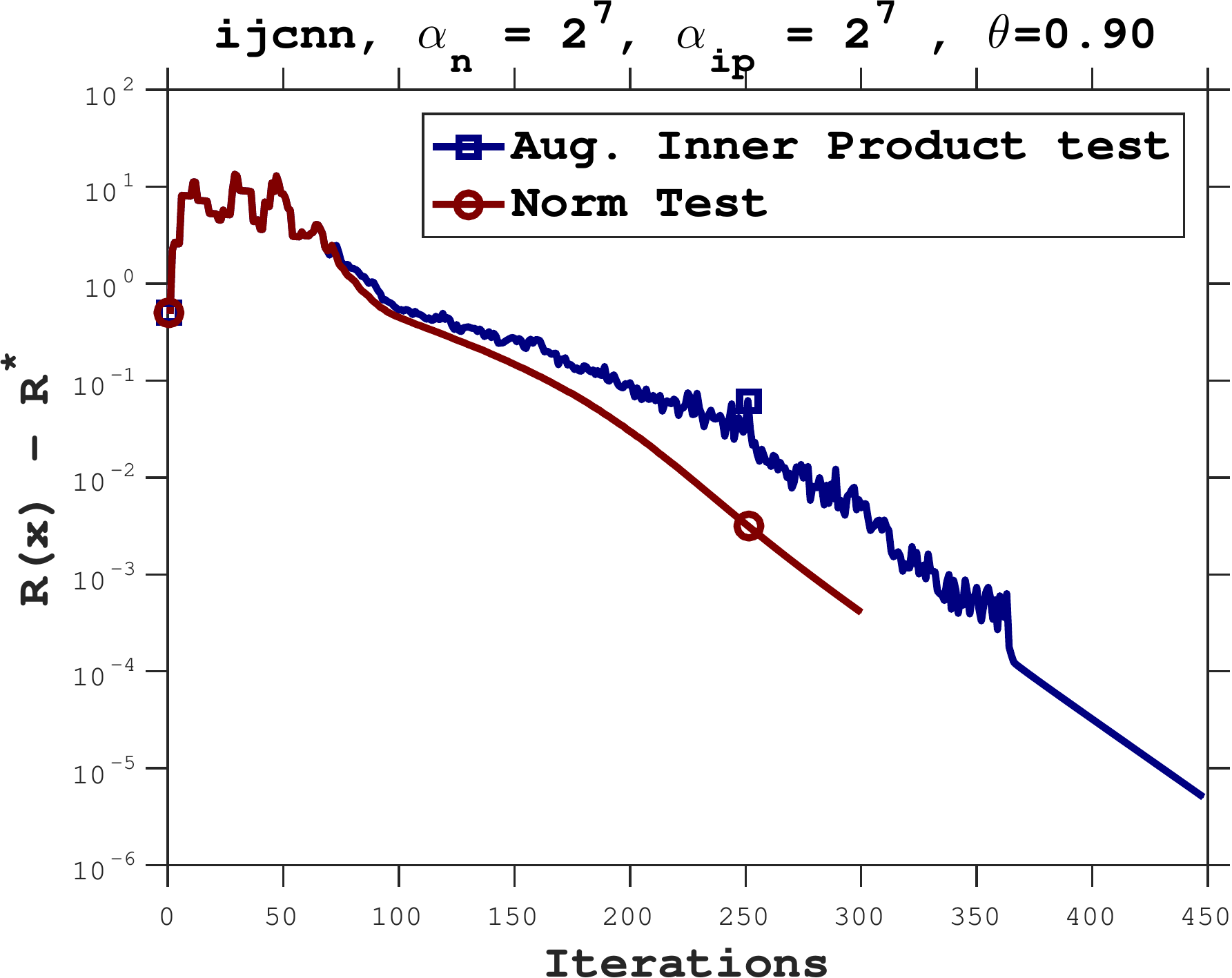}
		\includegraphics[width=0.3\linewidth]{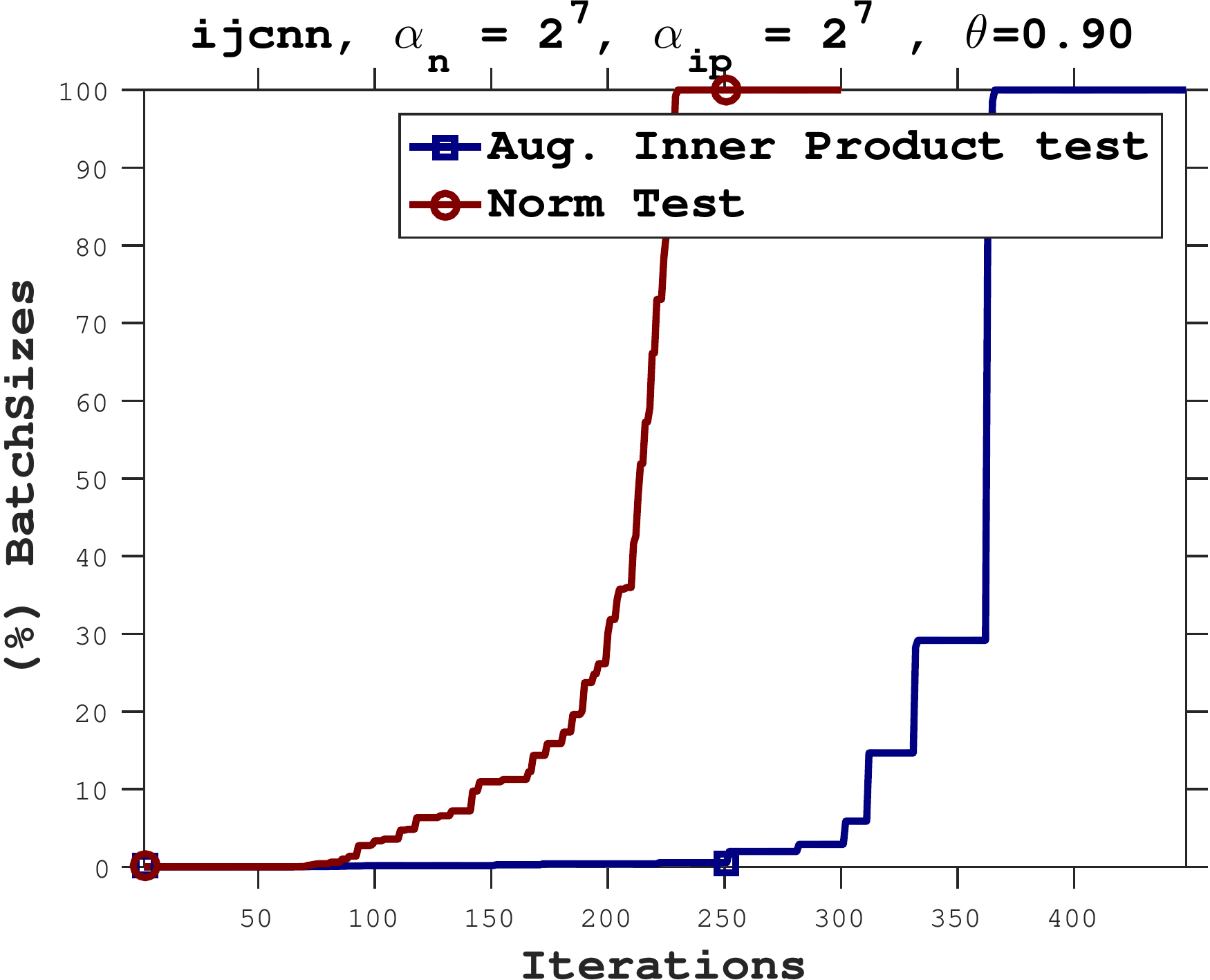}
		\par\end{centering}
	\caption{ {\tt ijcnn} dataset: Performance of the adaptive sampling algorithm using the augmented inner product test and the norm test. The steplengths were chosen as  $\alpha_{n} =2^{7}$, $\alpha_{ip} =2^{7}$.Left: Function error vs. effective gradient evaluations; Middle: Function error vs. iterations; Right: Batch size $|S_k|$ vs. iterations.} 
	\label{ijcnn-expmnt1} 
\end{figure}
\begin{figure}[H]
	\begin{centering}
		\includegraphics[width=0.3\linewidth]{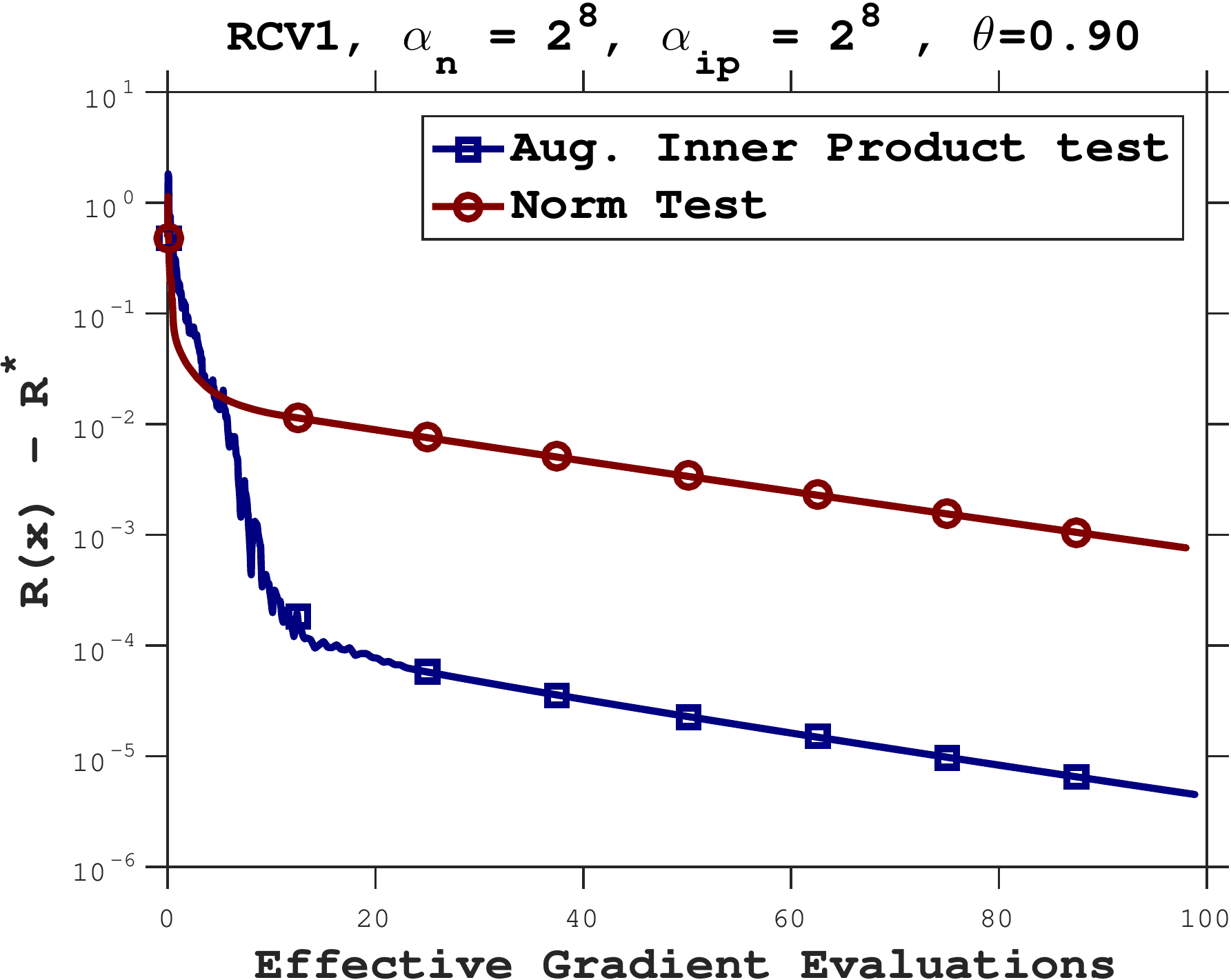}
		\includegraphics[width=0.3\linewidth]{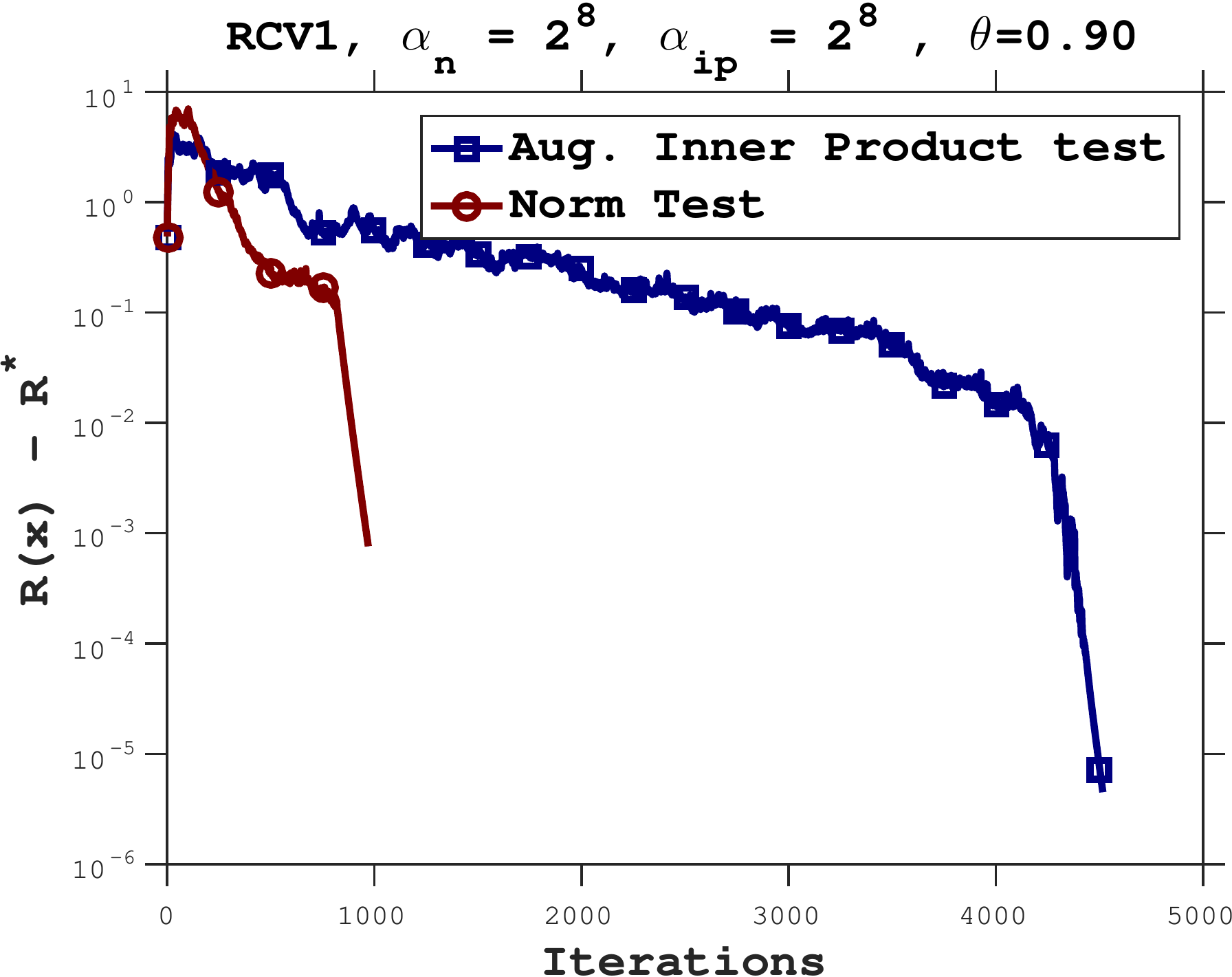}
		\includegraphics[width=0.3\linewidth]{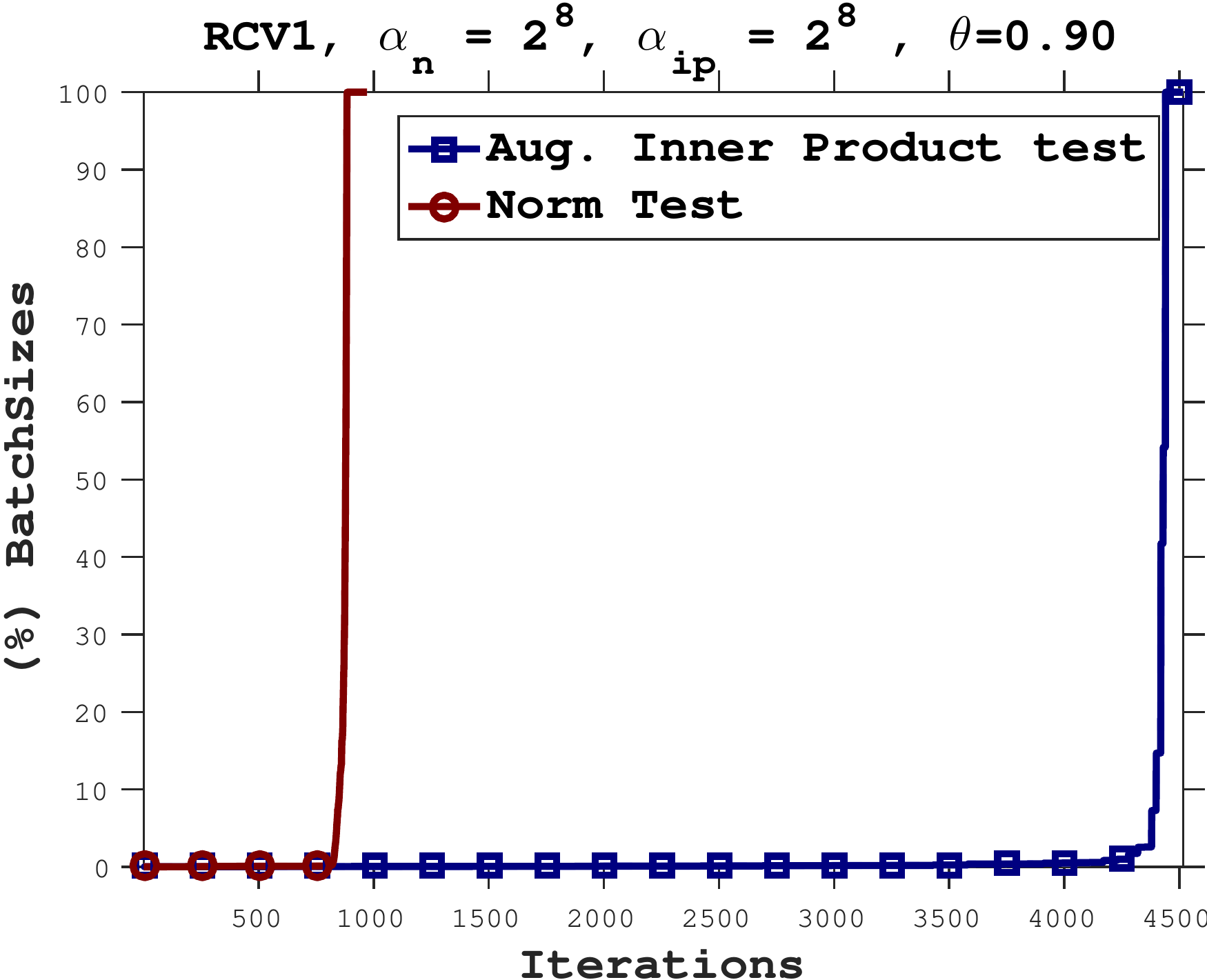}
		\par\end{centering}
	\caption{ {\tt RCV1} dataset: Performance of the adaptive sampling algorithm using the augmented inner product test and the norm test. The steplengths were chosen as  $\alpha_{n} =2^{8}$, $\alpha_{ip} =2^{8}$. Left: Function error vs. effective gradient evaluations; Middle: Function error vs. iterations; Right: Batch size $|S_k|$ vs. iterations.} 
	\label{RCV1-expmnt1} 
\end{figure}
\begin{figure}[H]
	\begin{centering}
		\includegraphics[width=0.3\linewidth]{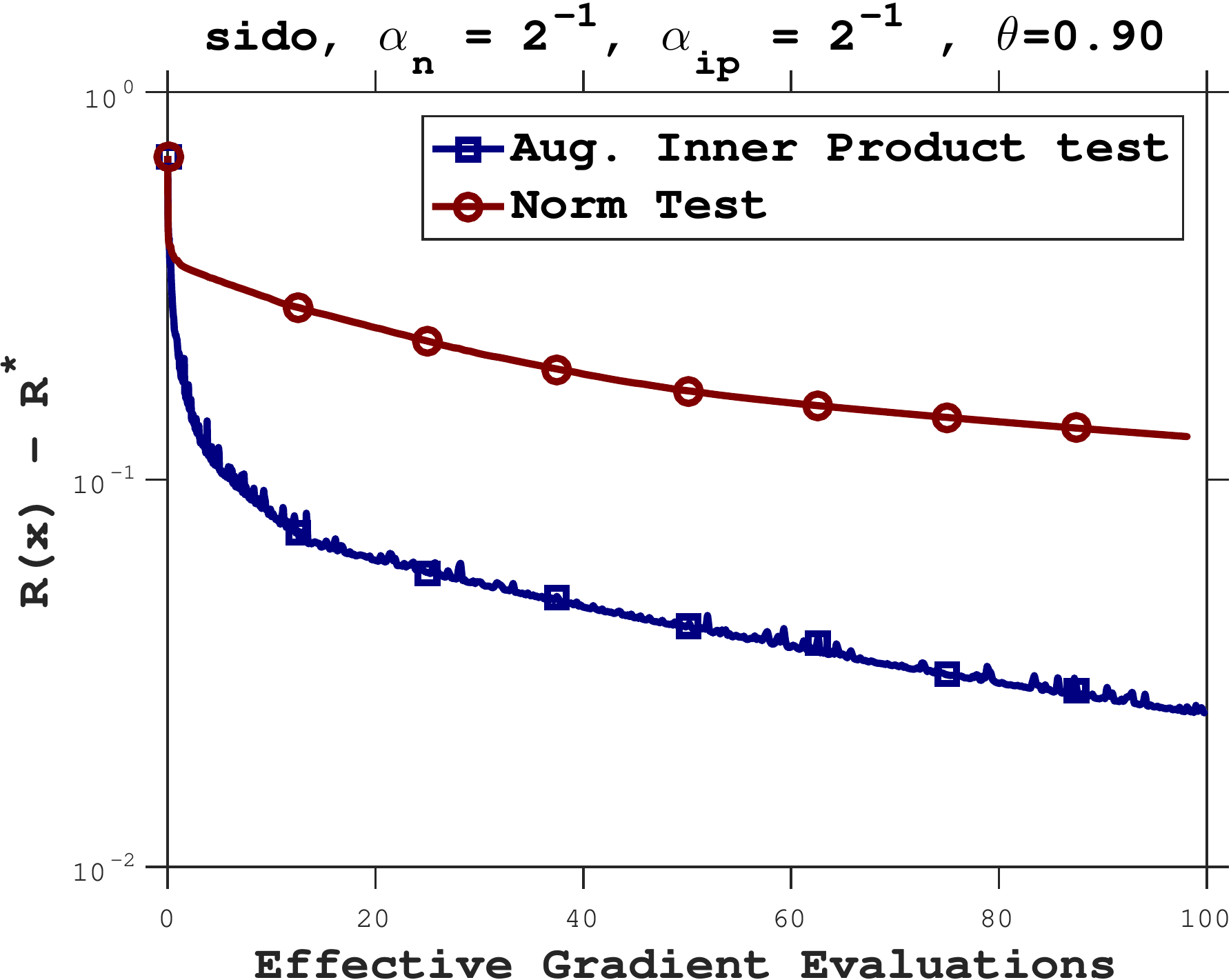}
		\includegraphics[width=0.3\linewidth]{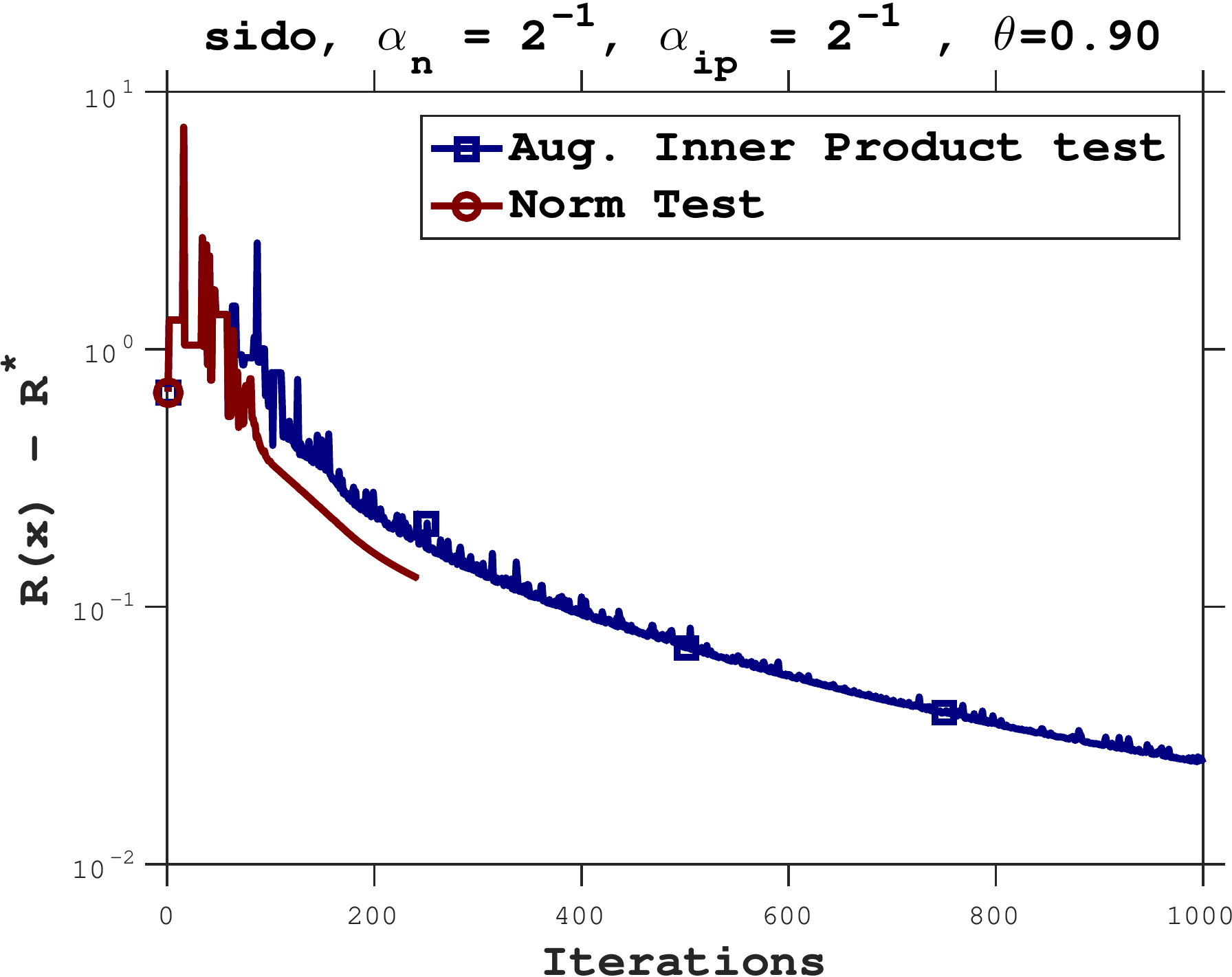}
		\includegraphics[width=0.3\linewidth]{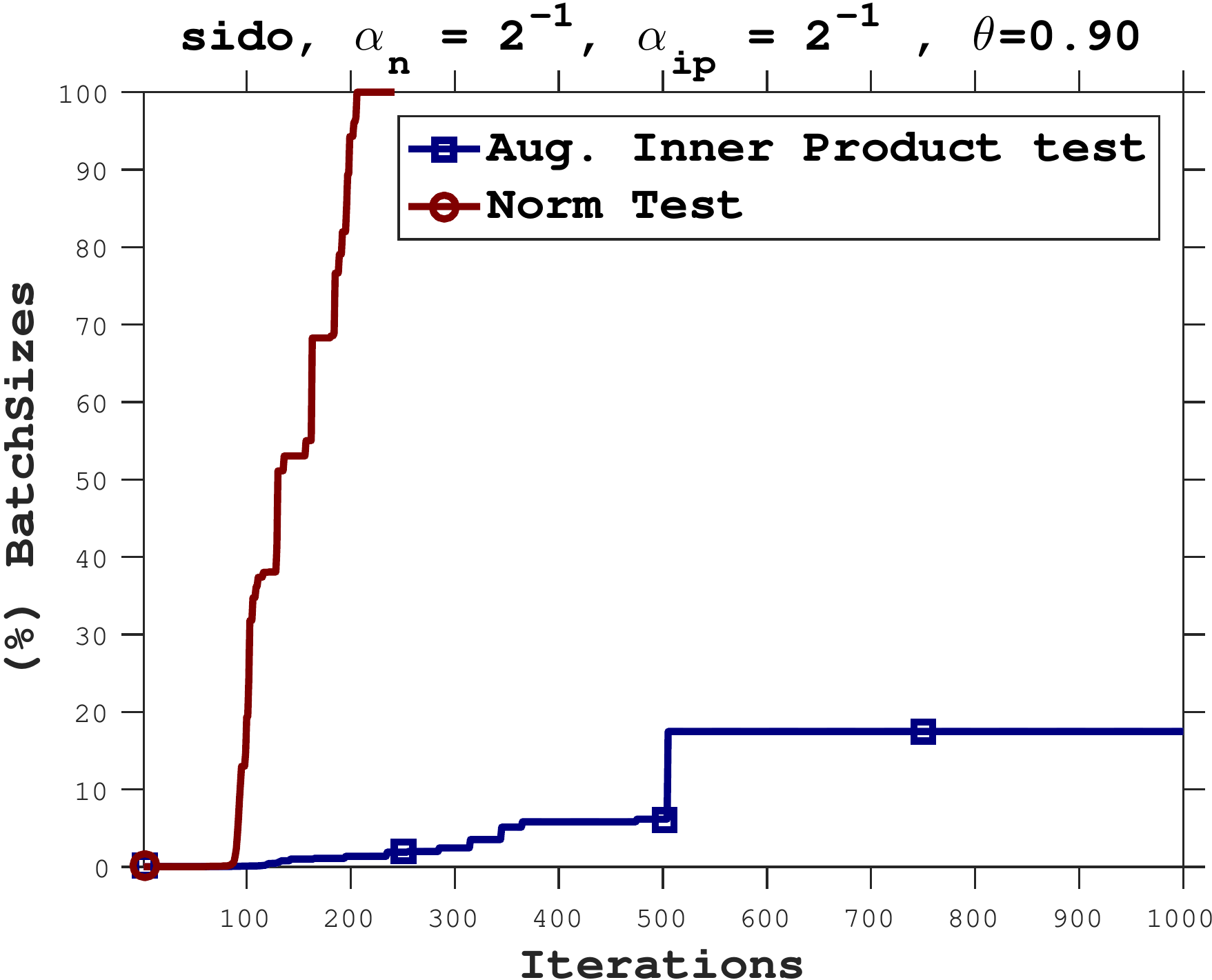}
		\par\end{centering}
	\caption{ {\tt sido} dataset: Performance of the adaptive sampling algorithm using the augmented inner product test and the norm test. The steplengths were chosen as  $\alpha_{n} =2^{-1}$, $\alpha_{ip} =2^{-1}$. Left: Function error vs. effective gradient evaluations; Middle: Function error vs. iterations; Right: Batch size $|S_k|$ vs. iterations.} 
	\label{sido-expmnt1} 
\end{figure}
\begin{figure}[H]
	\begin{centering}
		\includegraphics[width=0.3\linewidth]{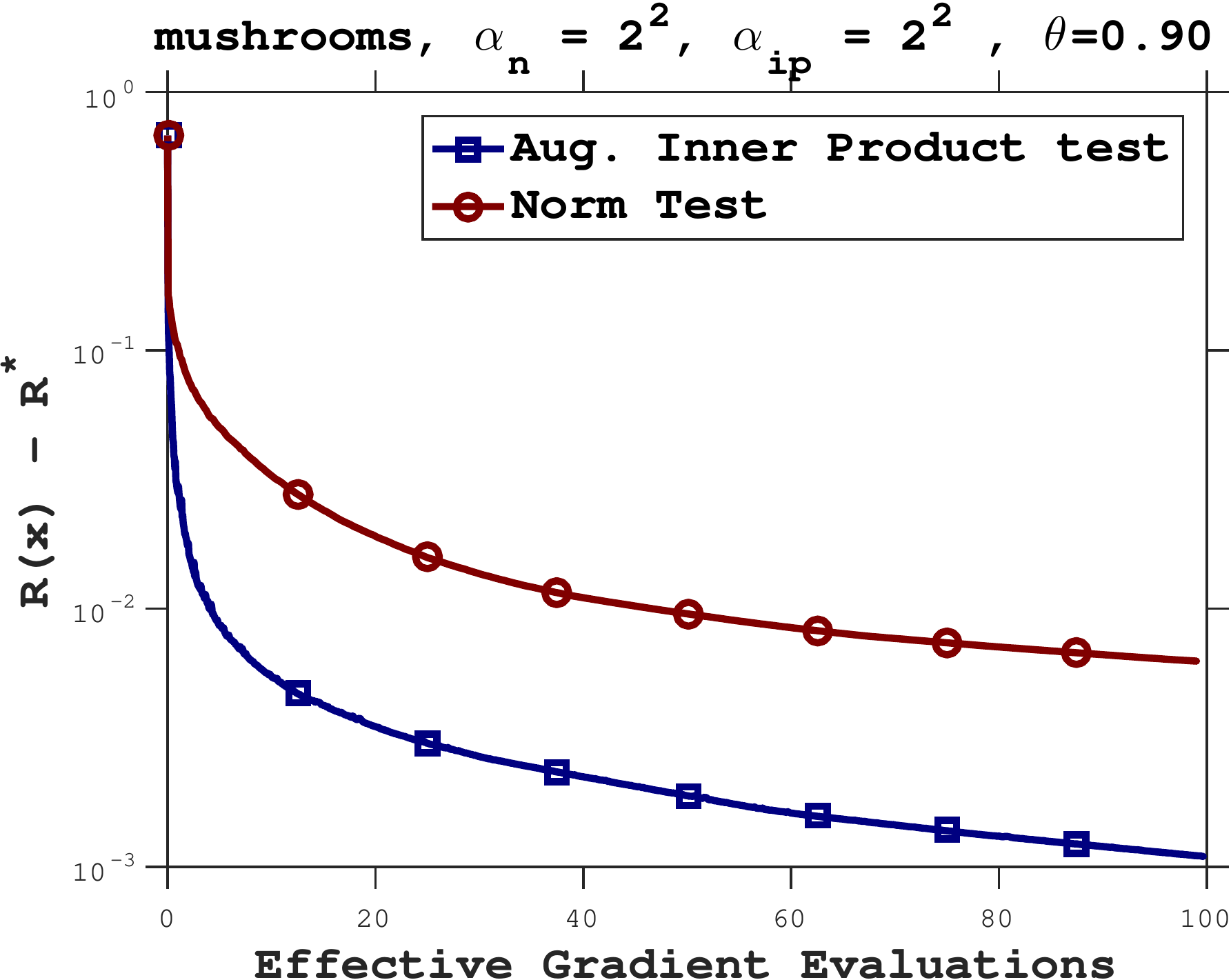}
		\includegraphics[width=0.3\linewidth]{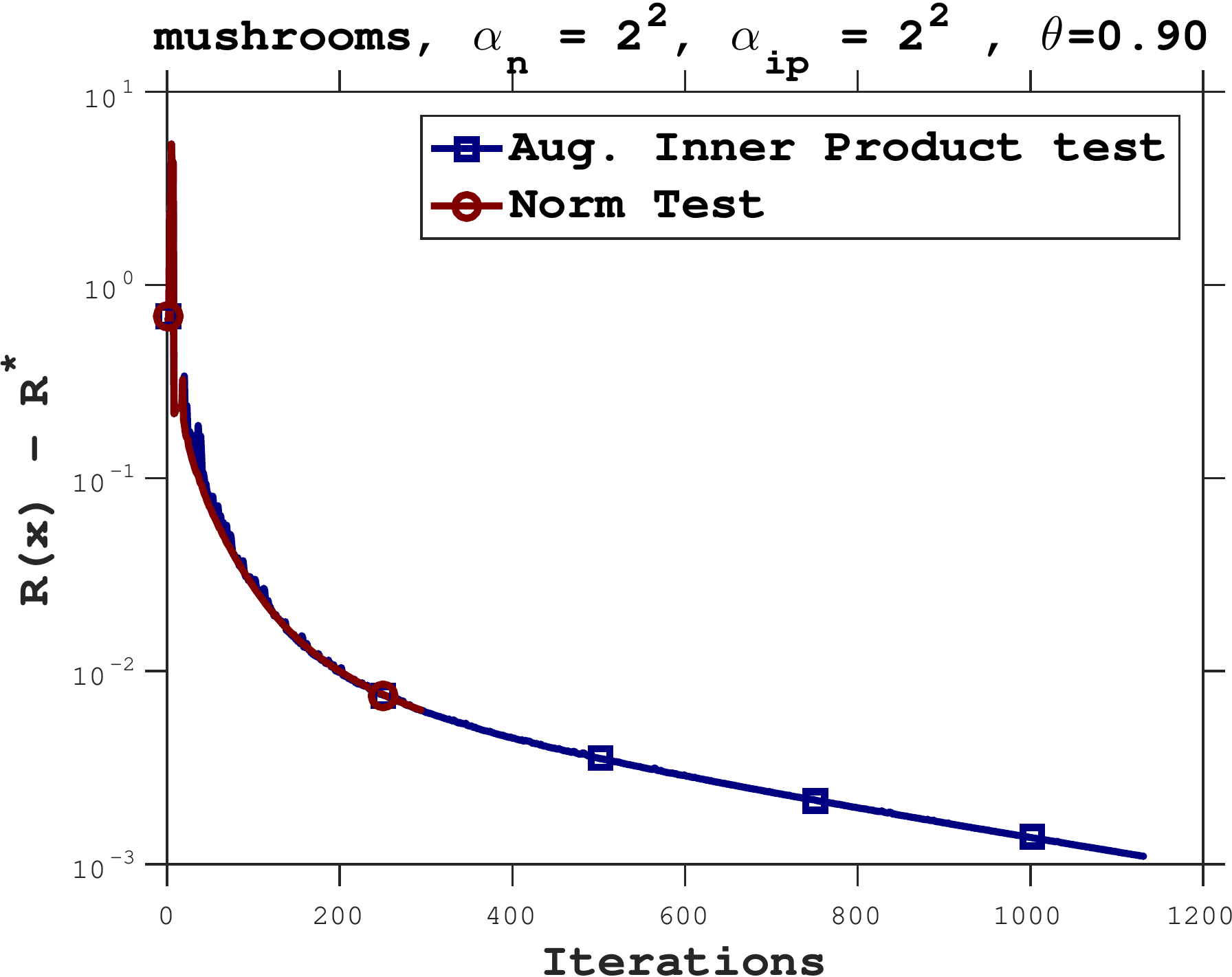}
		\includegraphics[width=0.3\linewidth]{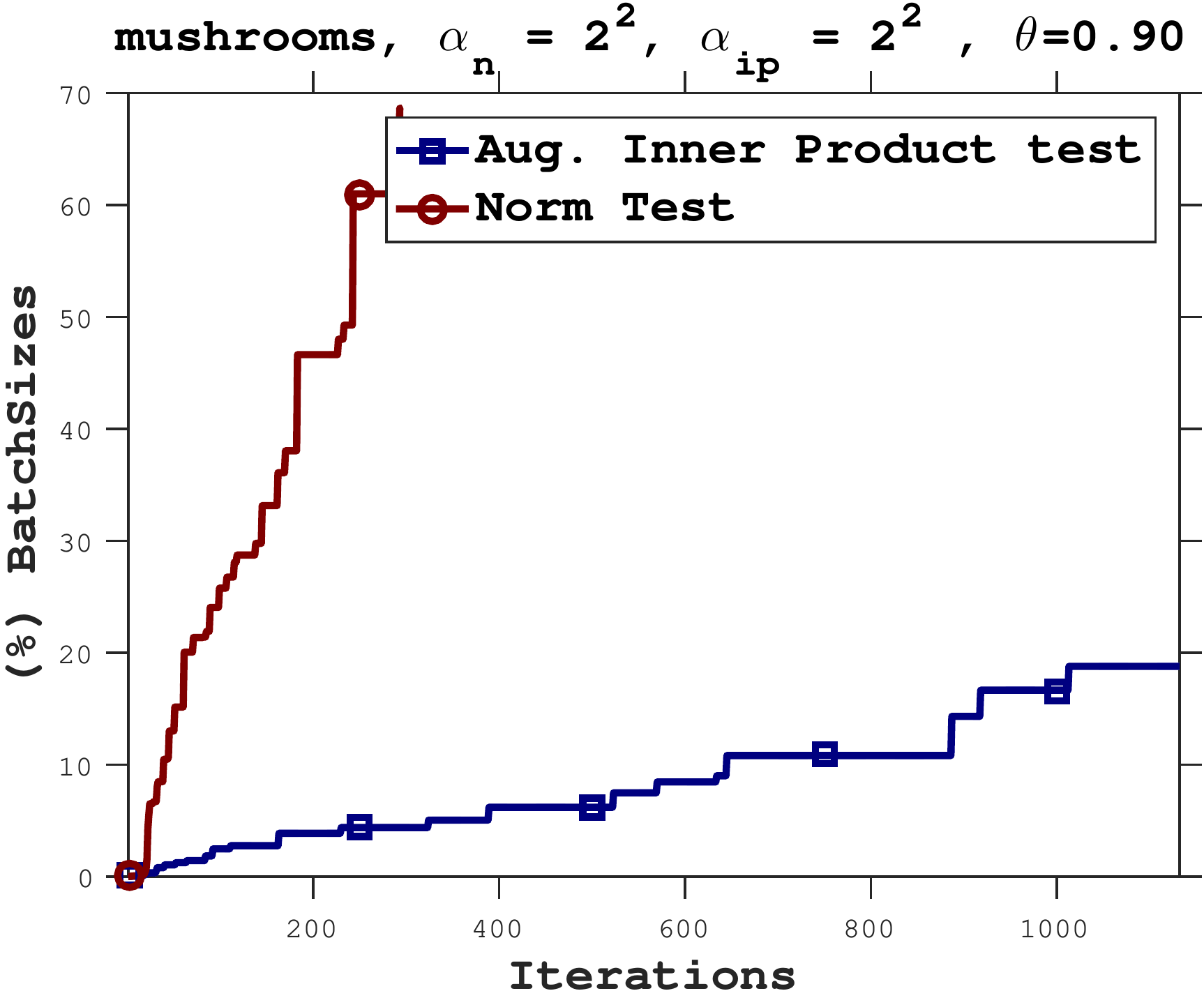}
		\par\end{centering}
	\caption{ {\tt mushrooms} dataset: Performance of the adaptive sampling algorithm using the augmented inner product test and the norm test. The steplengths were chosen as  $\alpha_{n} =2^{2}$, $\alpha_{ip} =2^{2}$. Left: Function error vs. effective gradient evaluations; Middle: Function error vs. iterations; Right: Batch size $|S_k|$ vs. iterations.} 
	\label{mushrooms-expmnt1} 
\end{figure}
\begin{figure}[H]
	\begin{centering}
		\includegraphics[width=0.3\linewidth]{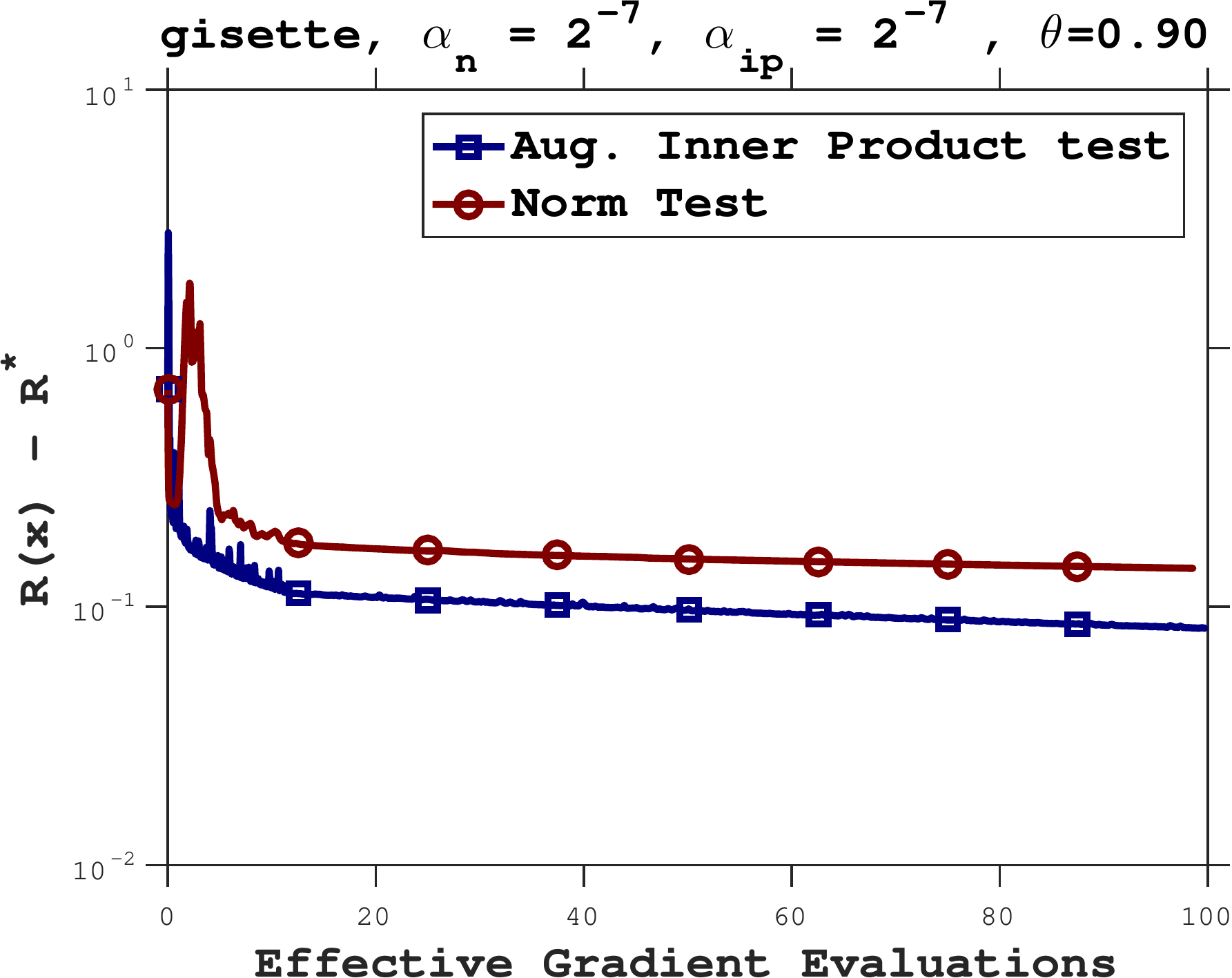}
		\includegraphics[width=0.3\linewidth]{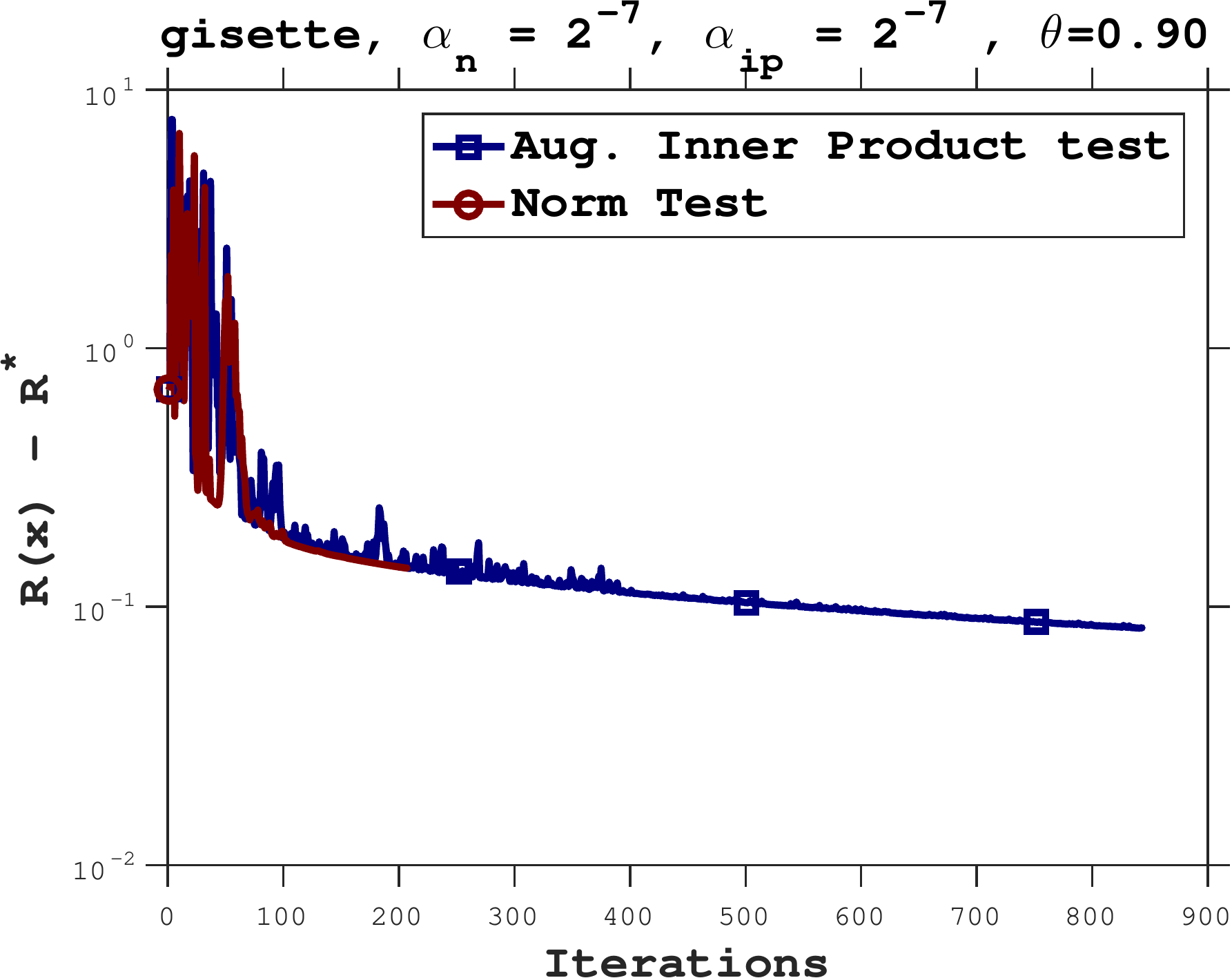}
		\includegraphics[width=0.3\linewidth]{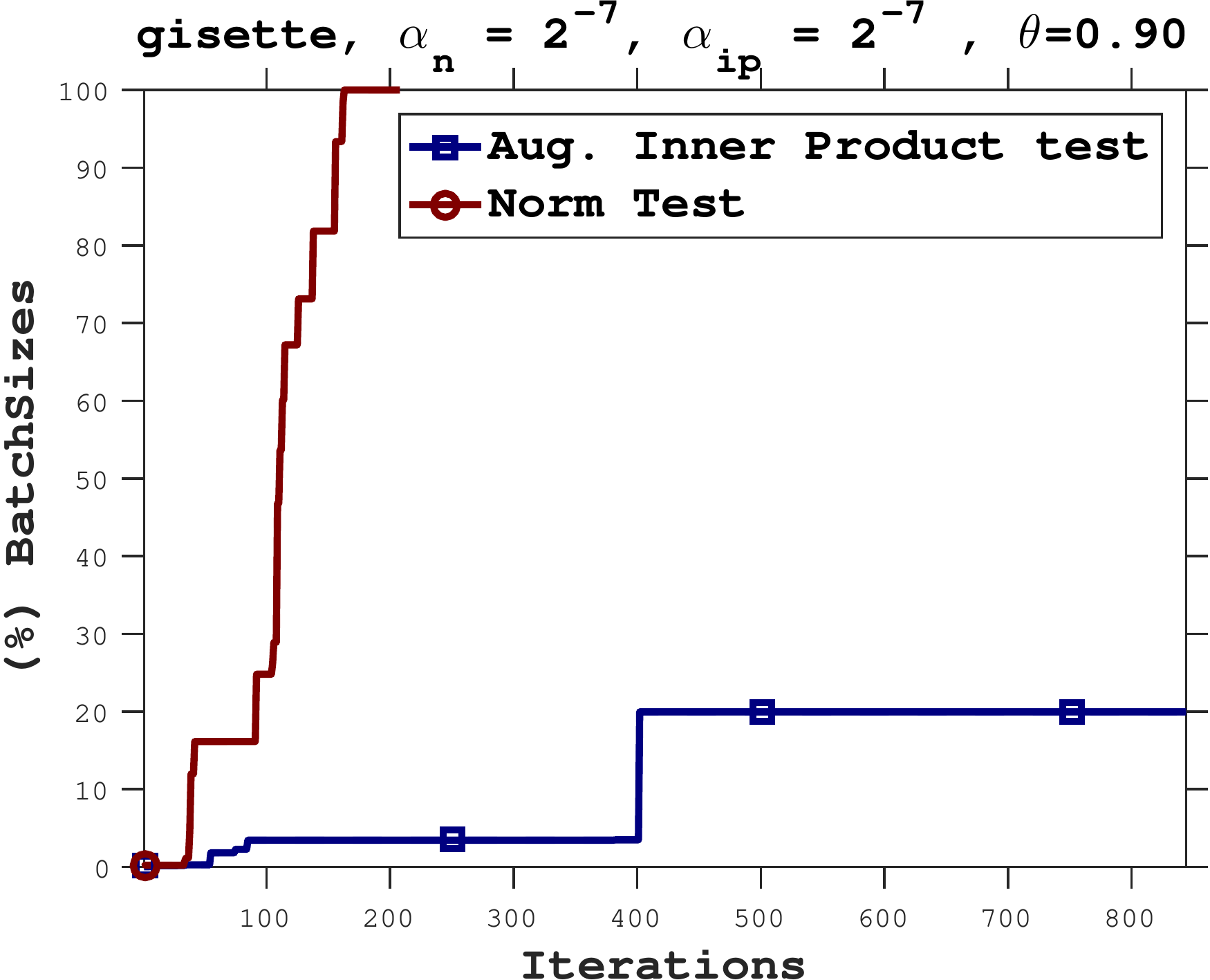}
		\par\end{centering}
	\caption{ {\tt Gisette} dataset: Performance of the adaptive sampling algorithm using the augmented inner product test and the norm test. The steplengths were chosen as $\alpha_{n} =2^{-7}$, $\alpha_{ip} =2^{-7}$. Left: Function error vs. effective gradient evaluations; Middle: Function error vs. iterations; Right: Batch size $|S_k|$ vs. iterations.} 
	\label{gisette-expmnt1} 
\end{figure}
\newpage
\subsection{Performance of the Complete Algorithm}
We now consider the performance of Algorithm~\ref{alg:complete} which uses a line search procedure to select the steplength $\alpha_k$ at every iteration. The parameters $\theta$ and $\nu$ were chosen as stated above. The initial estimate of the Lipschitz constant was  $L_0=1$, and we set $\eta=1.5$.
\begin{figure}[H]
	\begin{centering}
		\includegraphics[width=0.3\linewidth]{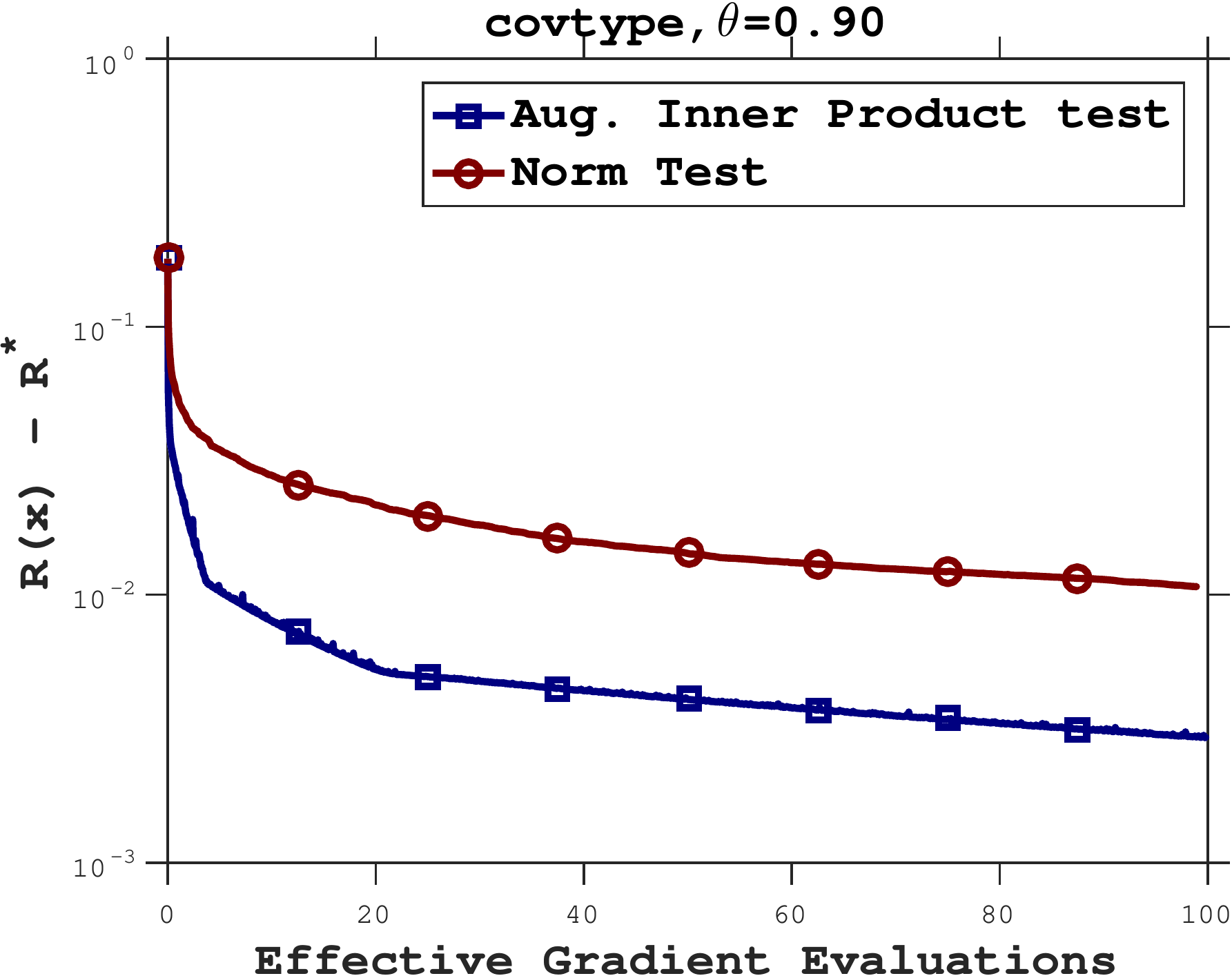}
		\includegraphics[width=0.3\linewidth]{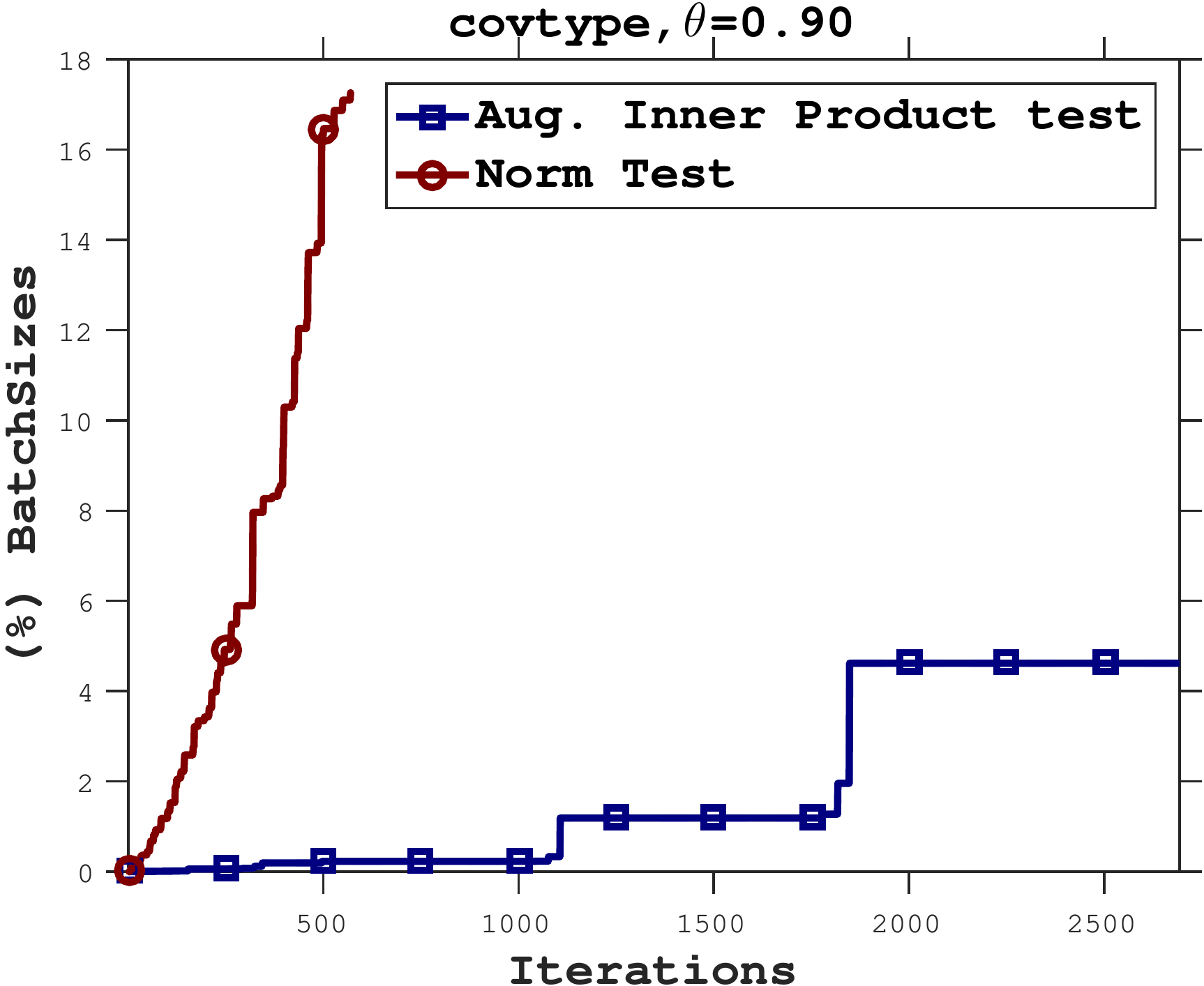}
		\includegraphics[width=0.3\linewidth]{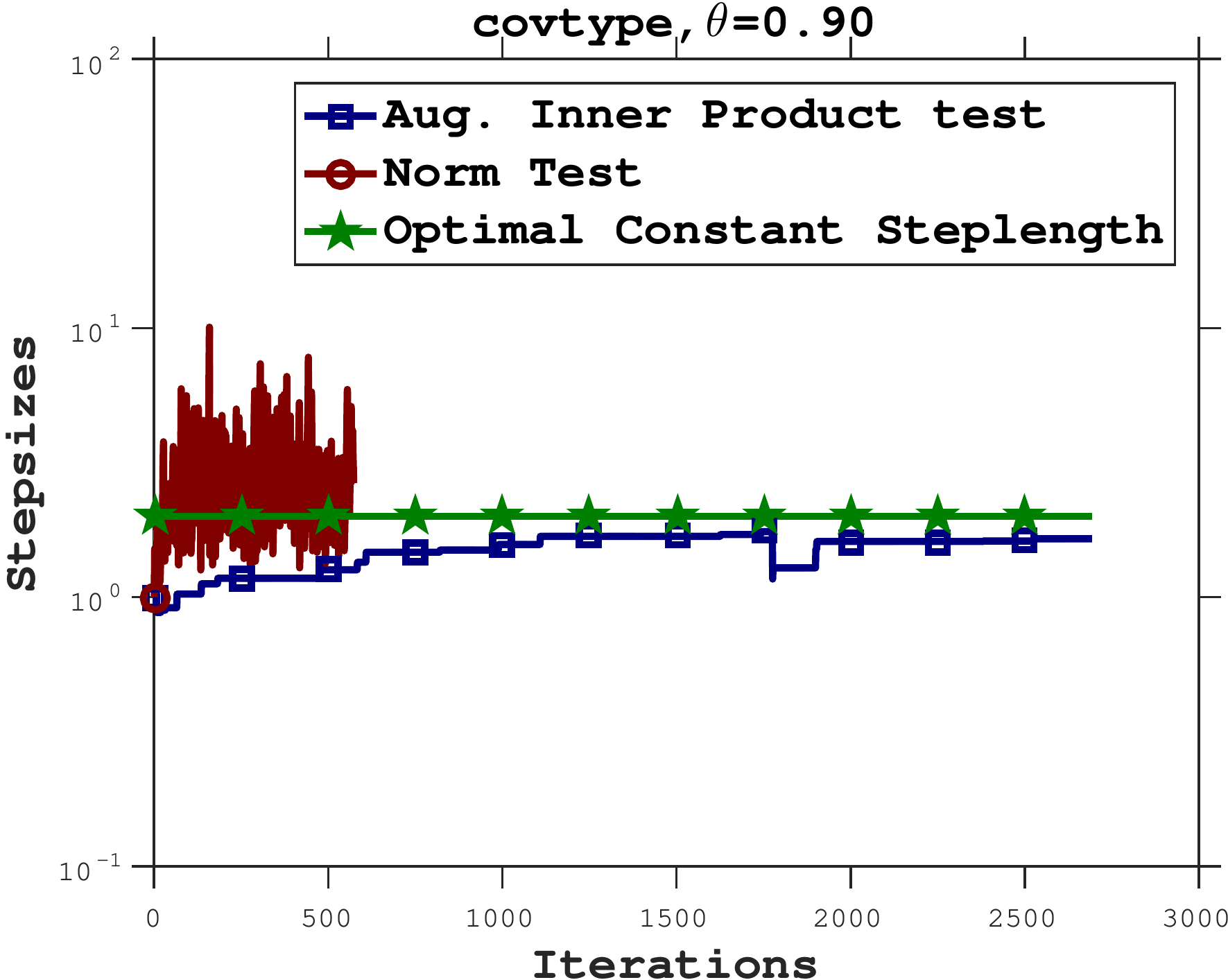}		
		\par\end{centering}
	\caption{ {\tt covertype} dataset: Performance of Algorithm~\ref{alg:complete} using the inner product test and using the norm test. Left: Function error vs. effective gradient evaluations; Middle: Batch size $|S_k|$ vs. iterations; Right: Stepsize vs. iterations. } 
	\label{covtype-expmnt2} 
\end{figure}

\begin{figure}[H]
	\begin{centering}
		\includegraphics[width=0.3\linewidth]{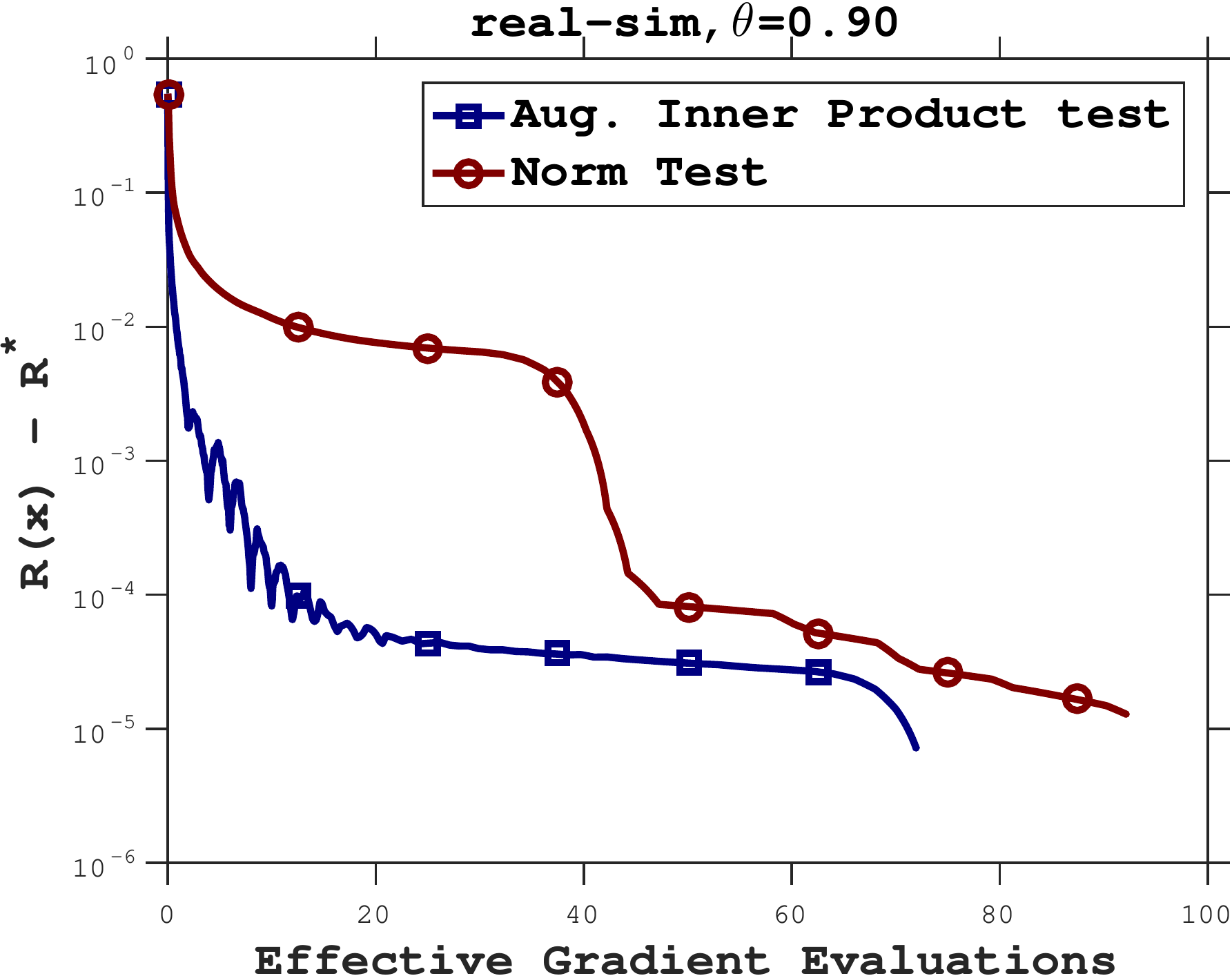}
		\includegraphics[width=0.3\linewidth]{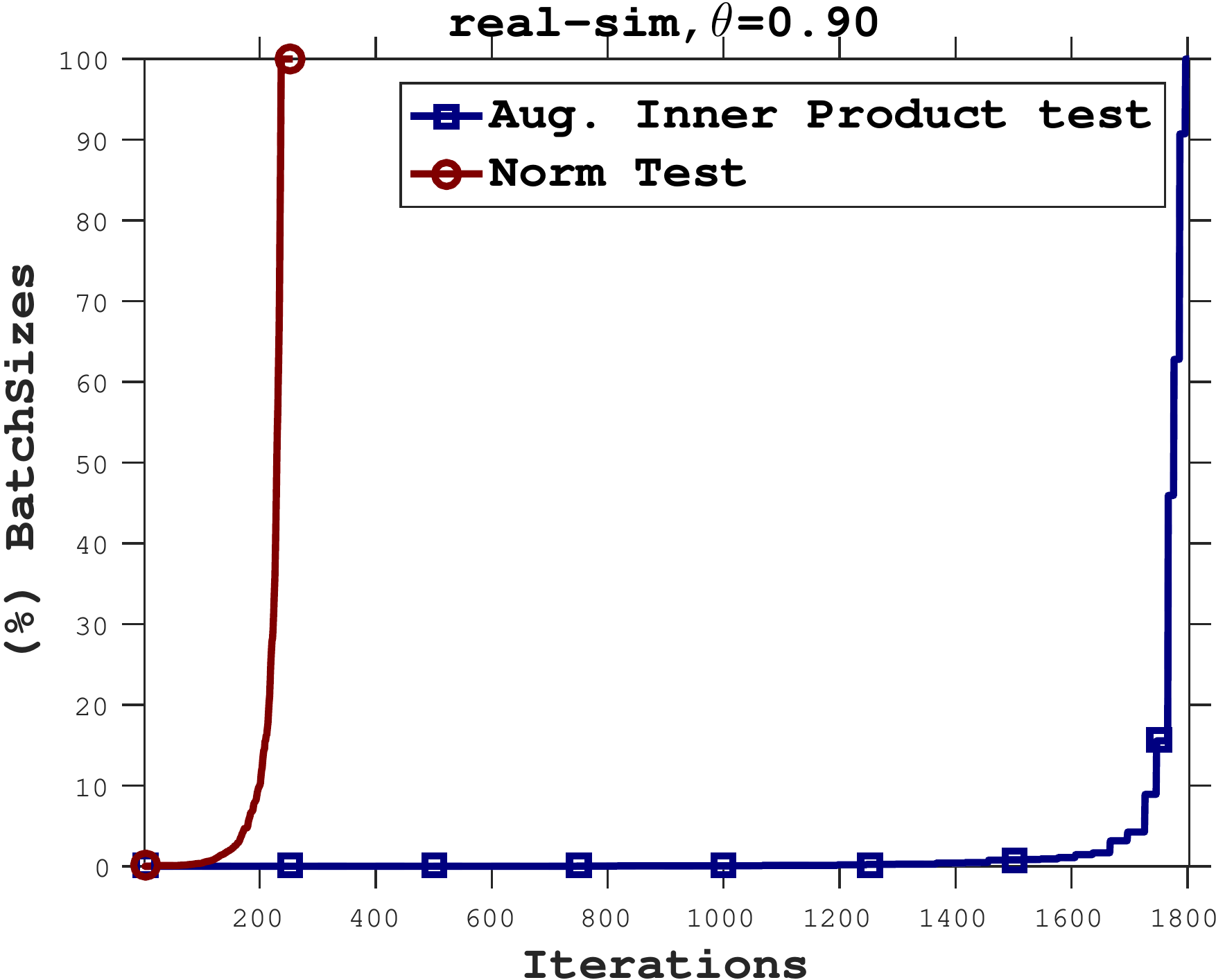}
		\includegraphics[width=0.3\linewidth]{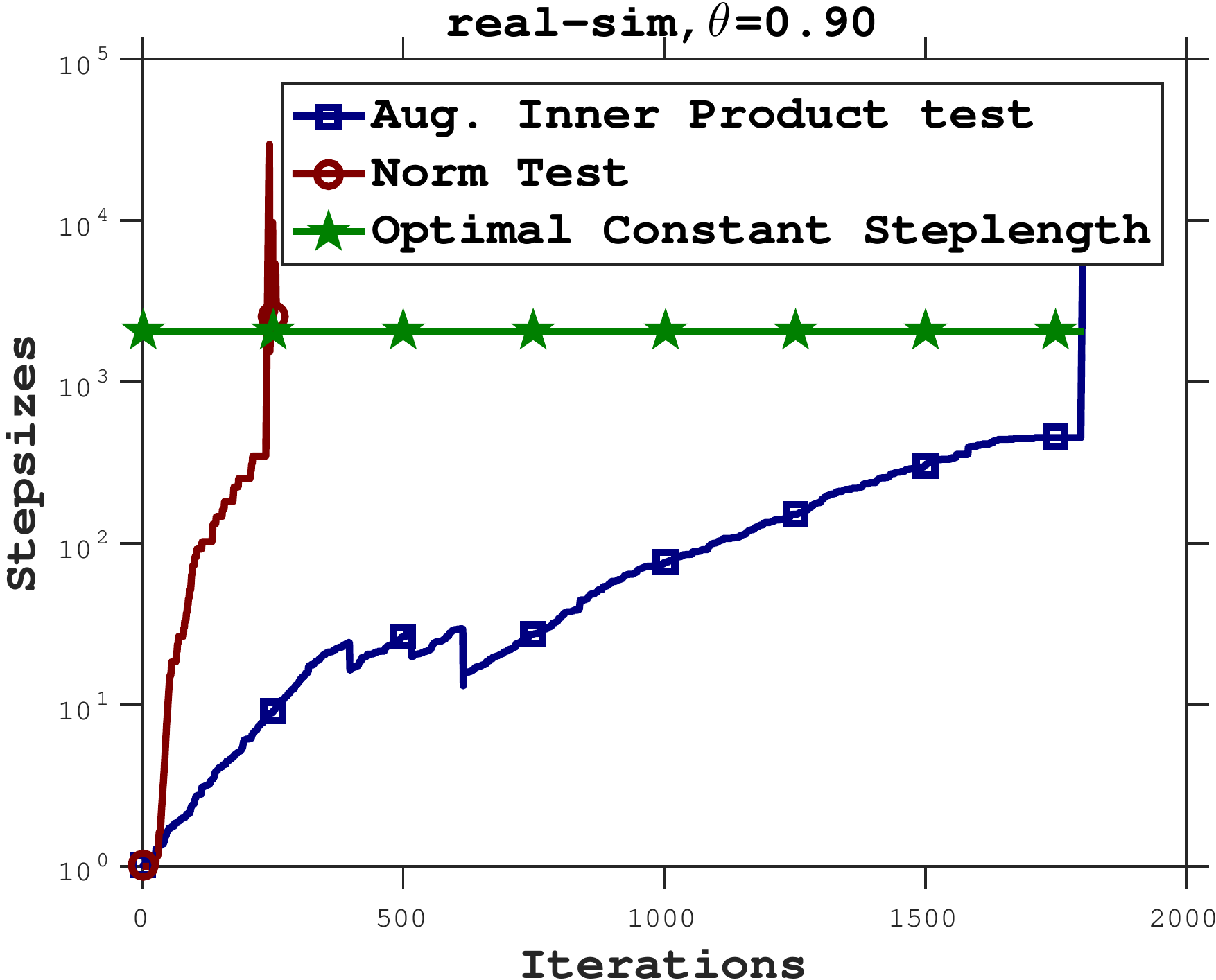}		
		\par\end{centering}
	\caption{ {\tt real-sim} dataset: Performance of Algorithm~\ref{alg:complete}  using the inner product test and using the norm test.  Left: Function error vs. effective gradient evaluations; Middle: Batch size $|S_k|$ vs. iterations; Right: Stepsize vs. iterations. } 
	\label{real-sim-expmnt2} 
\end{figure}
\begin{figure}[H]
	\begin{centering}
		\includegraphics[width=0.3\linewidth]{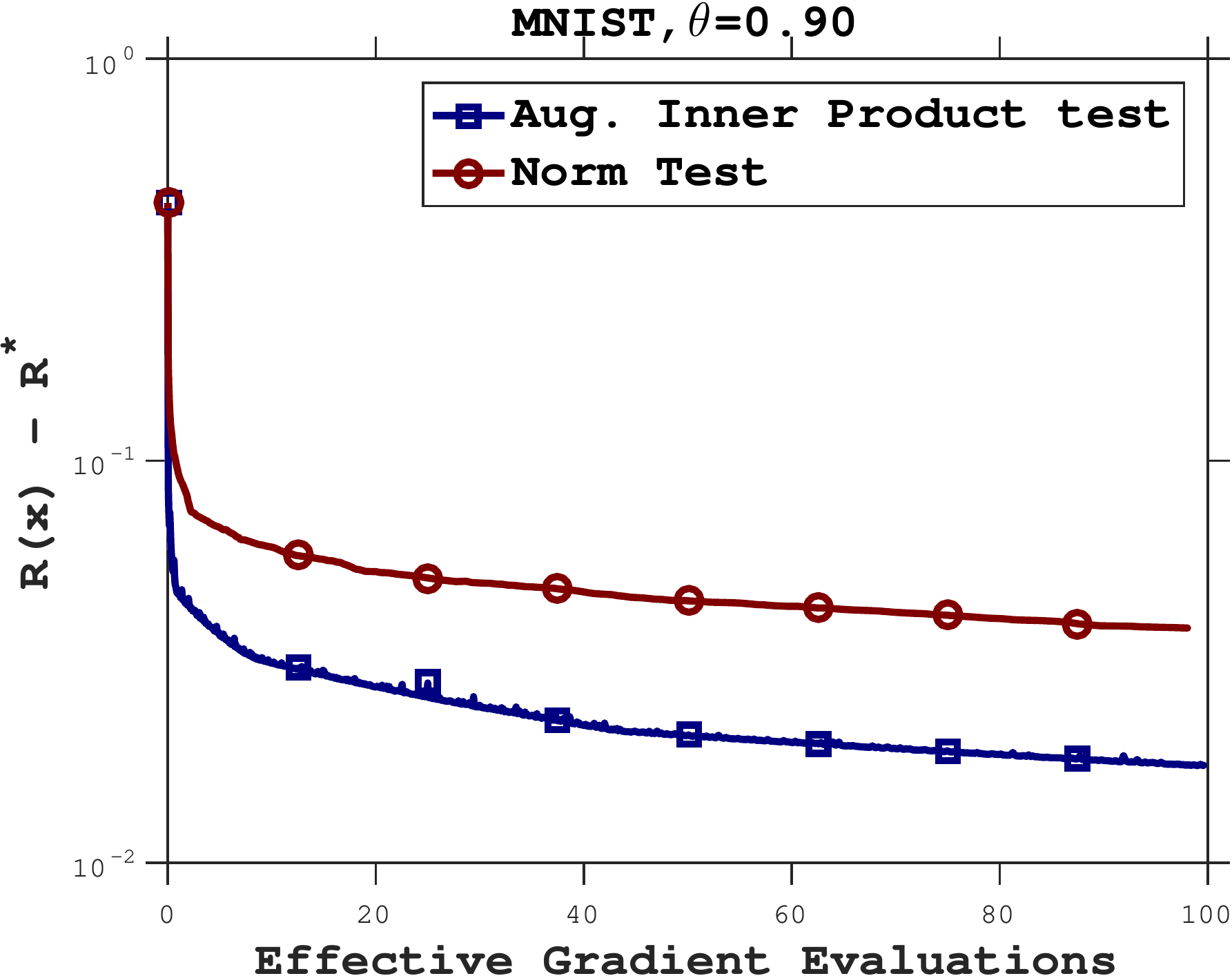}
		\includegraphics[width=0.3\linewidth]{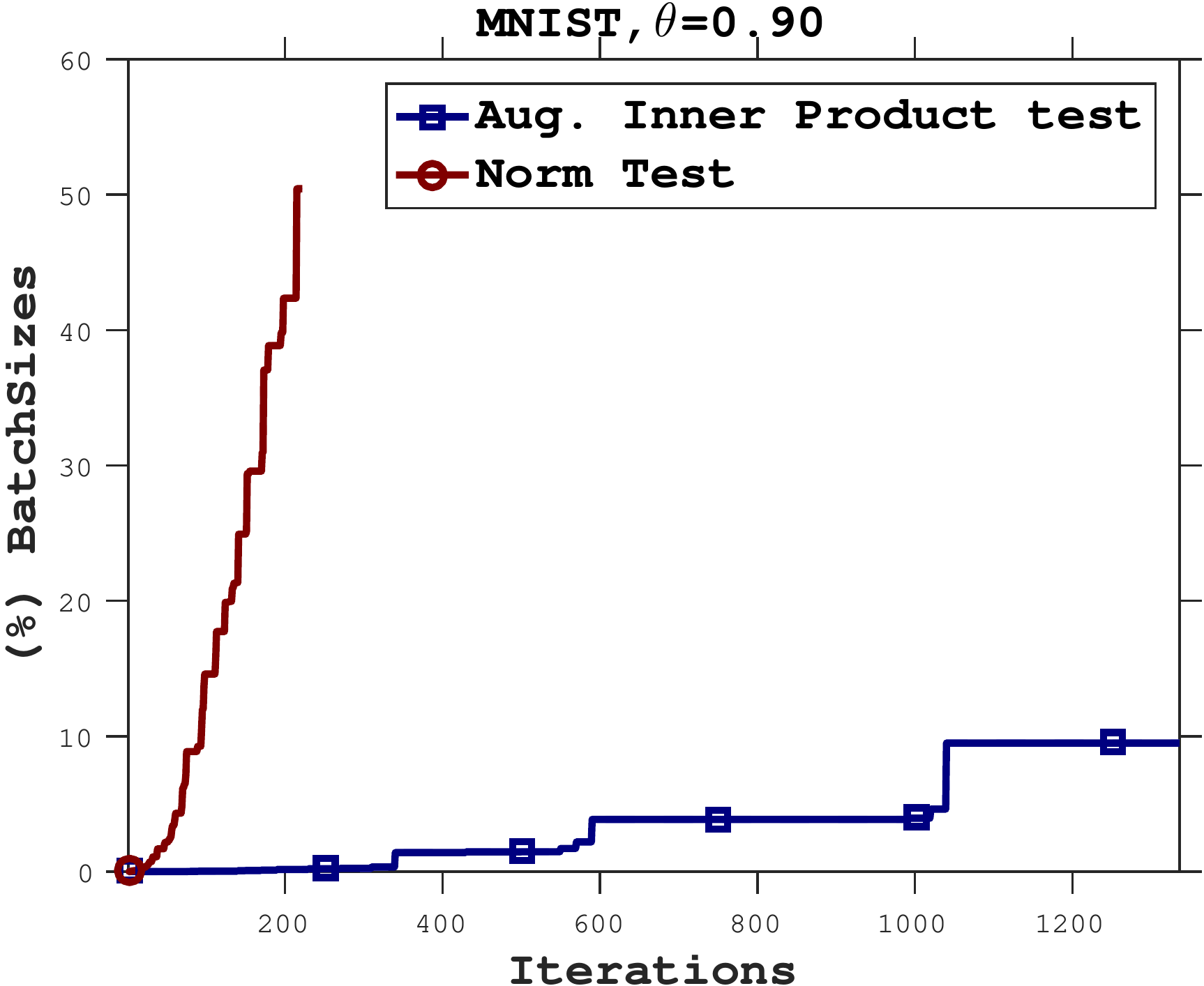}
		\includegraphics[width=0.3\linewidth]{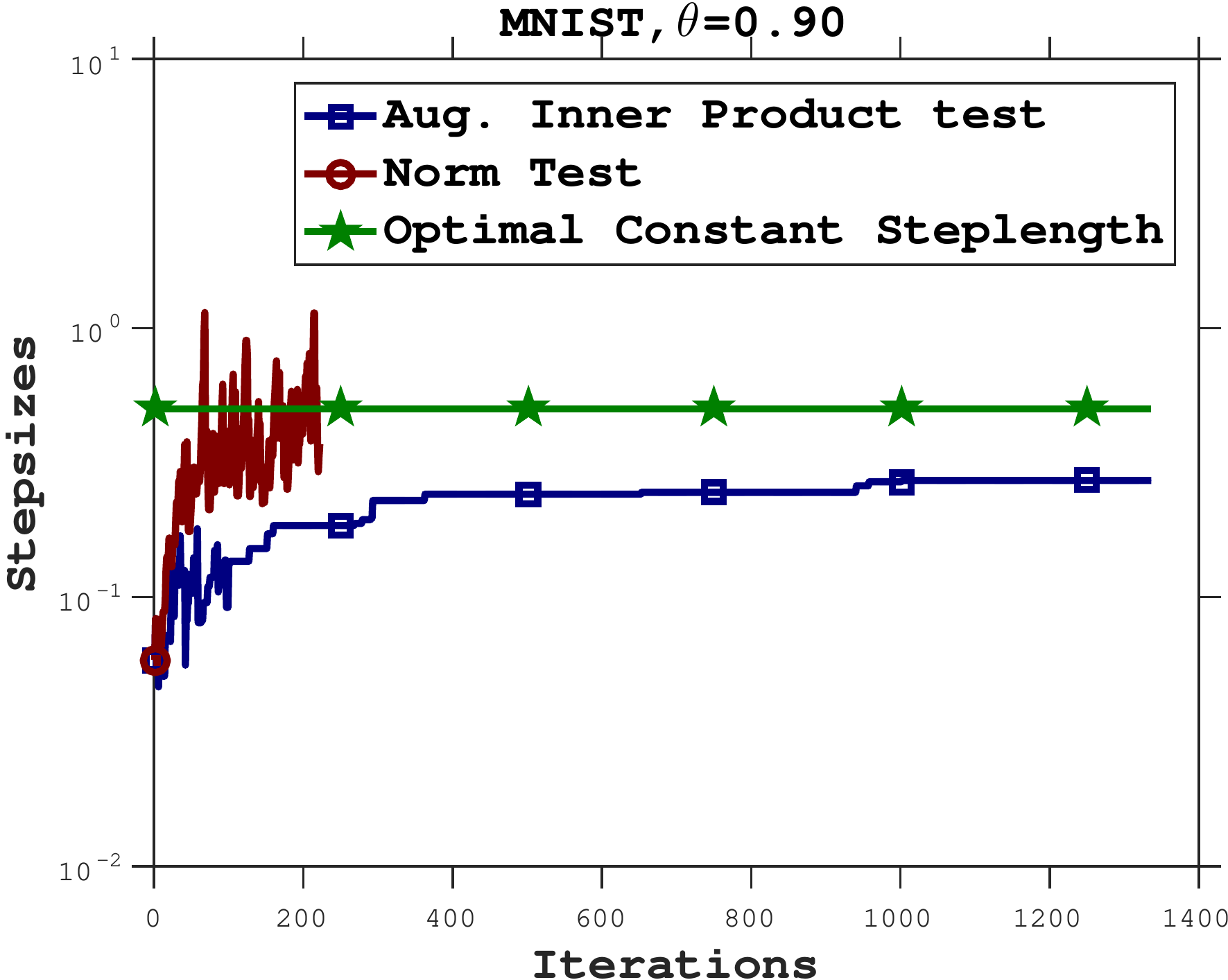}		
		\par\end{centering}
	\caption{ {\tt MNIST} dataset: Performance of Algorithm~\ref{alg:complete} using the inner product test and using the norm test. Left: Function error vs. effective gradient evaluations; Middle: Batch size $|S_k|$ vs. iterations; Right: Stepsize vs. iterations.} 
	\label{MNIST-expmnt2} 
\end{figure}
\begin{figure}[H]
	\begin{centering}
		\includegraphics[width=0.3\linewidth]{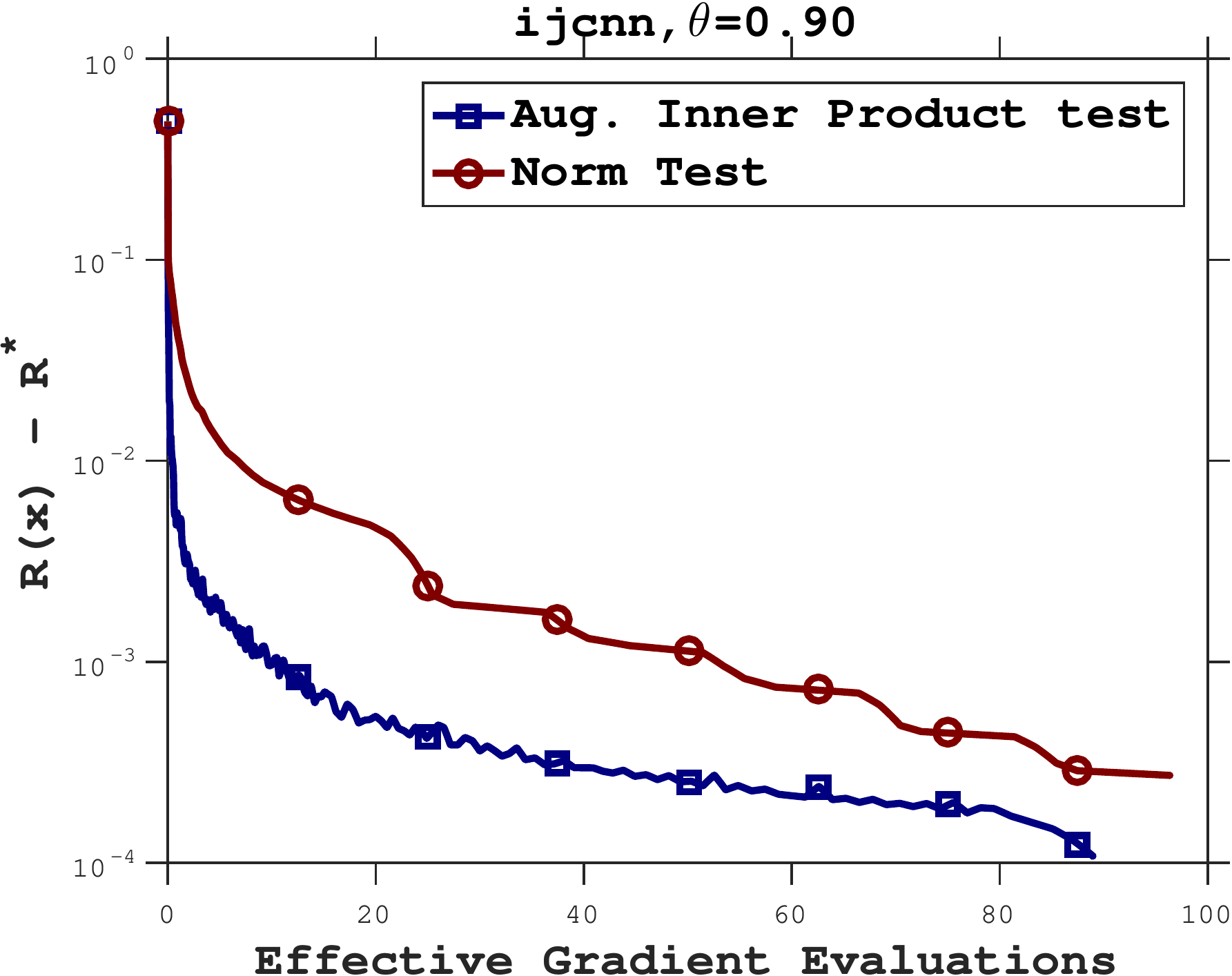}
		\includegraphics[width=0.3\linewidth]{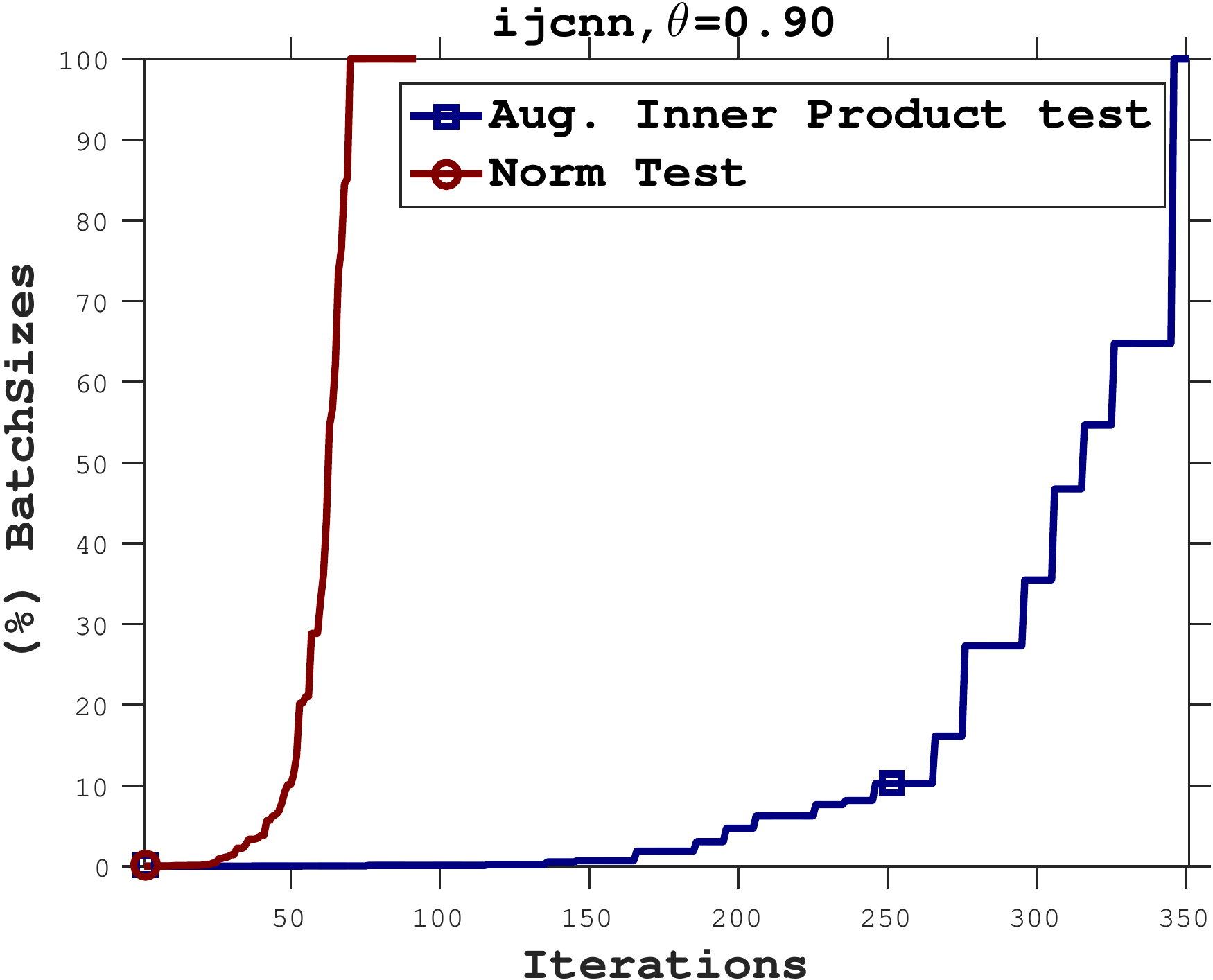}
		\includegraphics[width=0.3\linewidth]{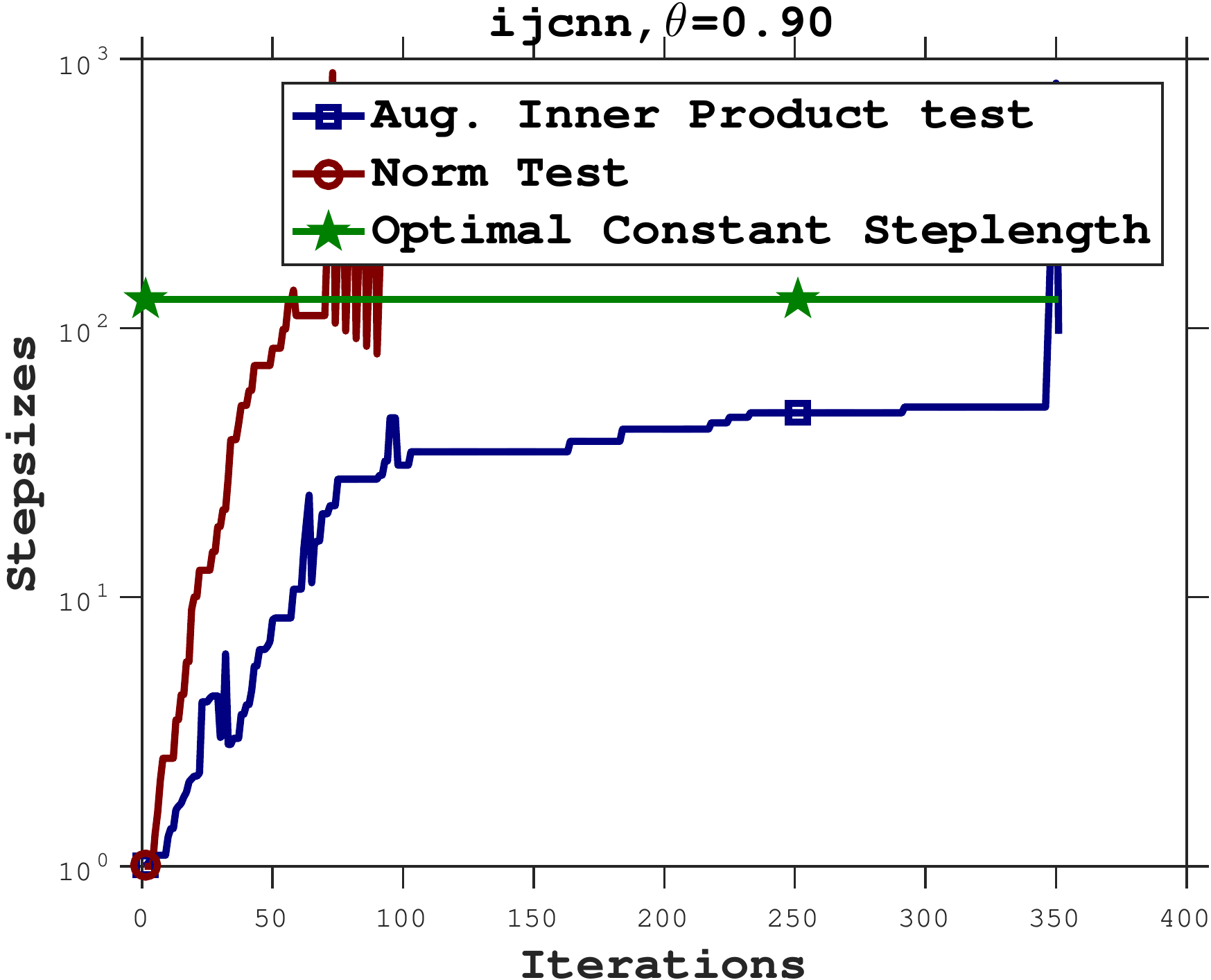}		
		\par\end{centering}
	\caption{ {\tt ijcnn} dataset: Performance of Algorithm~\ref{alg:complete} using the inner product test and using the norm test. Left: Function error vs. effective gradient evaluations; Middle: Batch size $|S_k|$ vs. iterations; Right: Stepsize vs. iterations.} 
	\label{ijcnn-expmnt2} 
\end{figure}
\begin{figure}[H]
	\begin{centering}
		\includegraphics[width=0.3\linewidth]{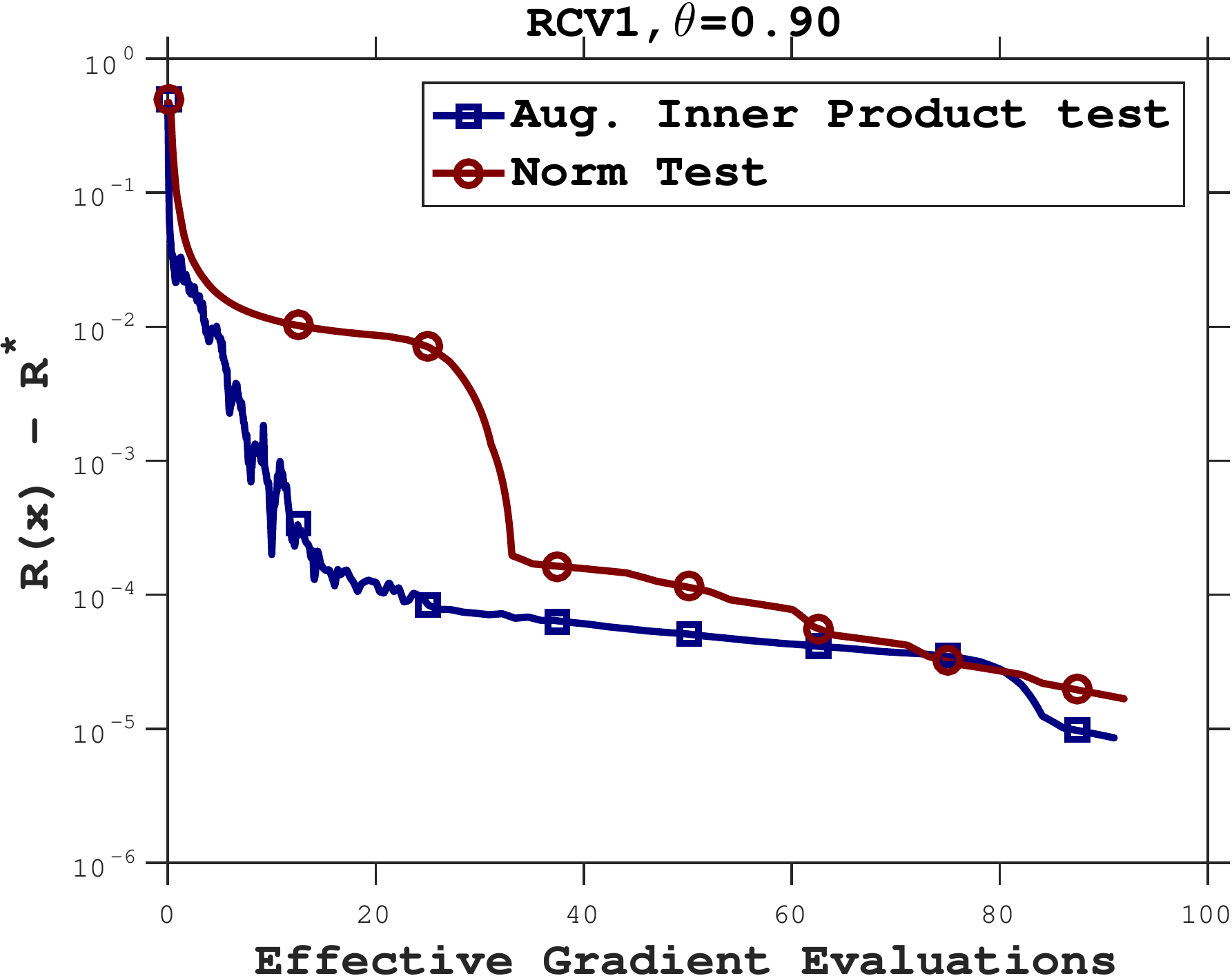}
		\includegraphics[width=0.3\linewidth]{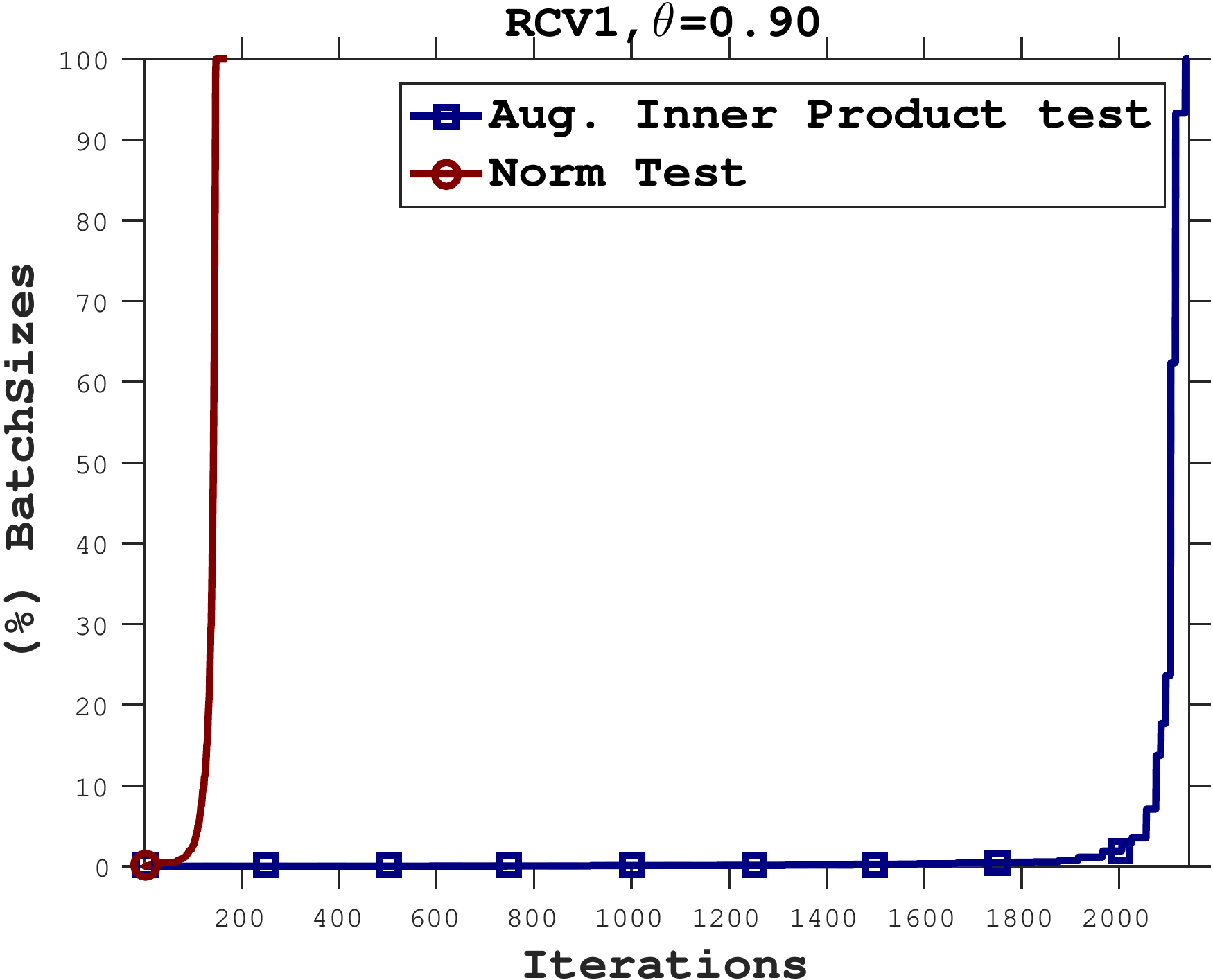}
		\includegraphics[width=0.3\linewidth]{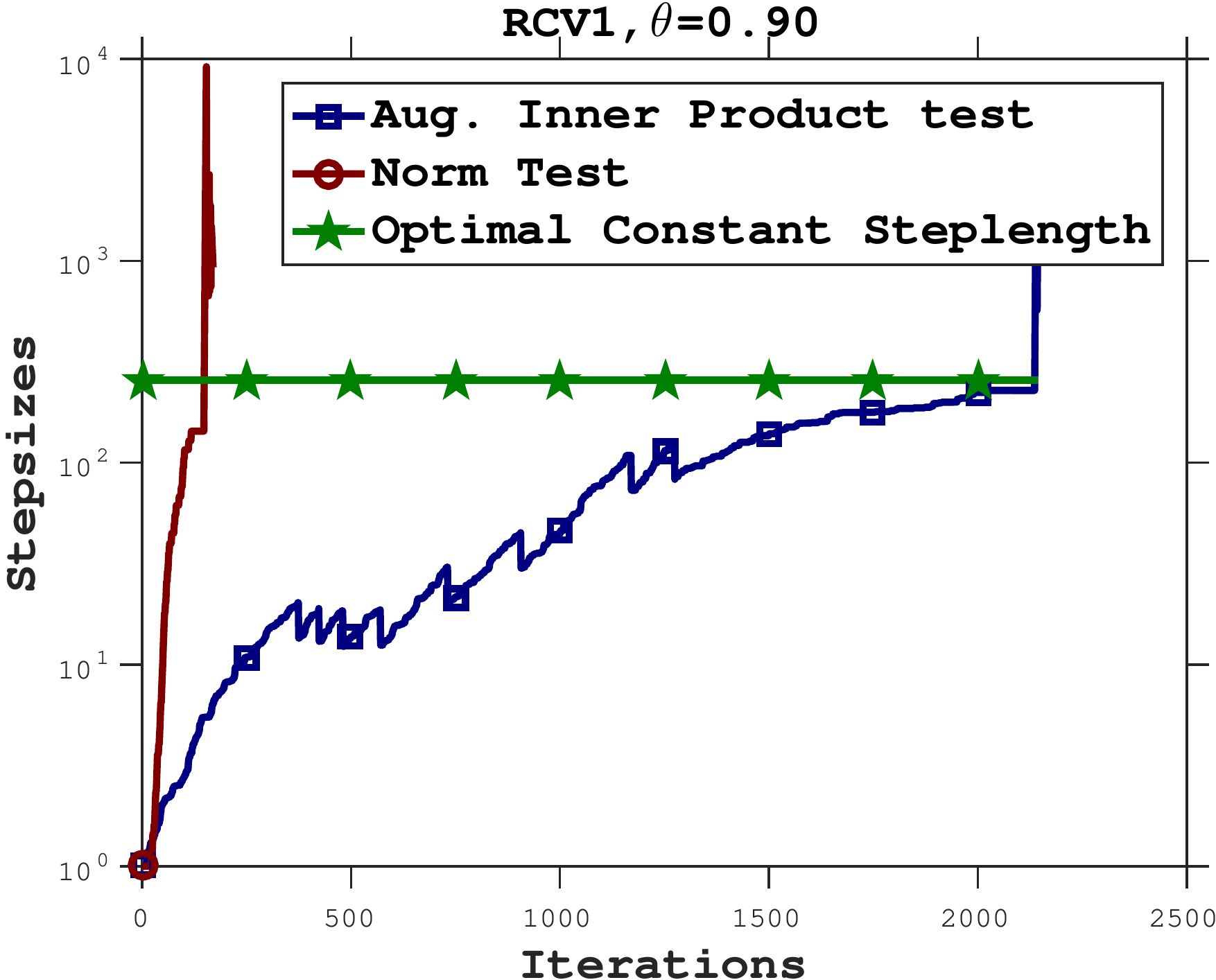}		
		\par\end{centering}
	\caption{ {\tt RCV1} dataset: Performance of Algorithm~\ref{alg:complete}  using the inner product test and using the norm test.  Left: Function error vs. effective gradient evaluations; Middle: Batch size $|S_k|$ vs. iterations; Right: Stepsize vs. iterations.} 
	\label{RCV1-expmnt2} 
\end{figure}
\begin{figure}[H]
	\begin{centering}
		\includegraphics[width=0.3\linewidth]{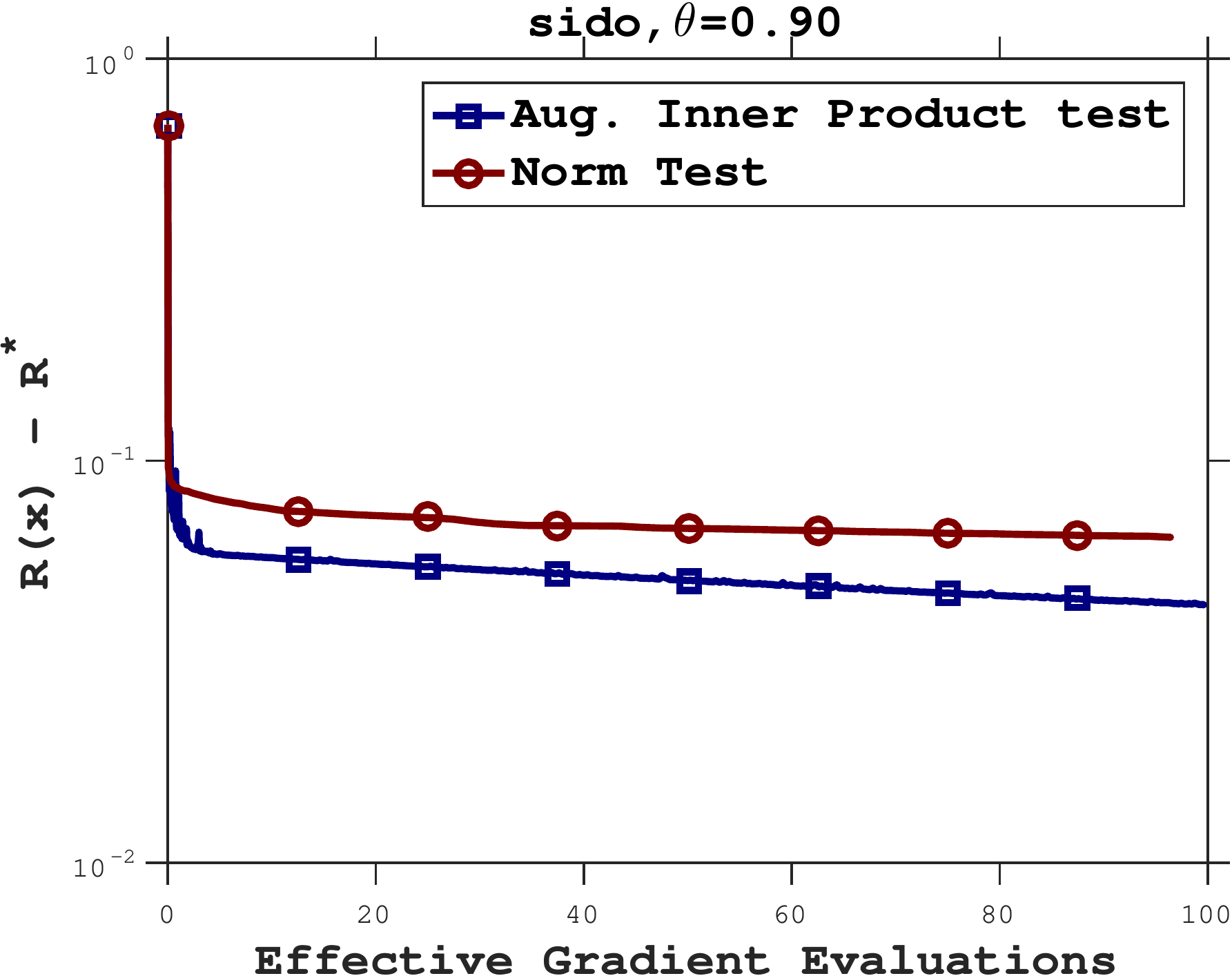}
		\includegraphics[width=0.3\linewidth]{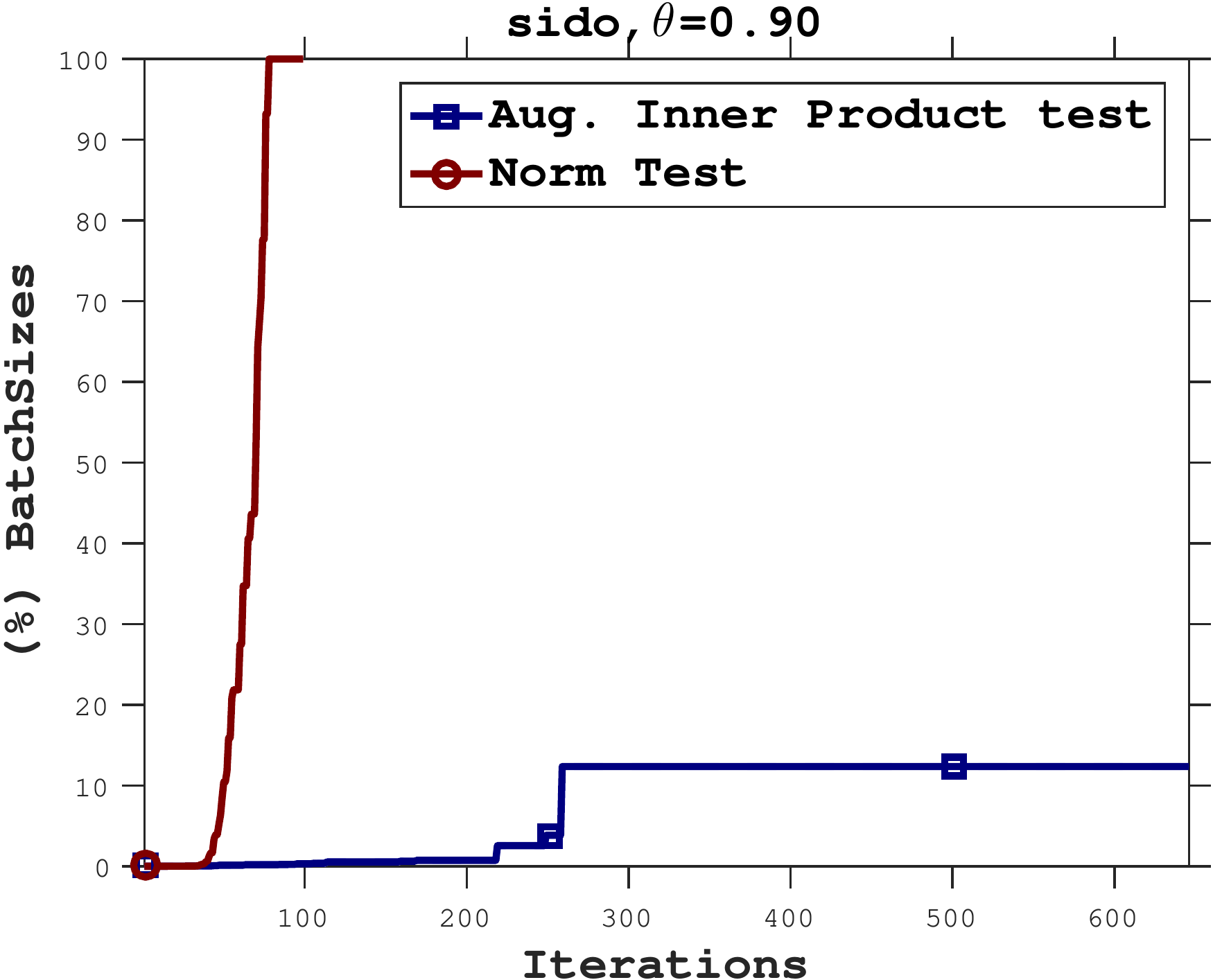}
		\includegraphics[width=0.3\linewidth]{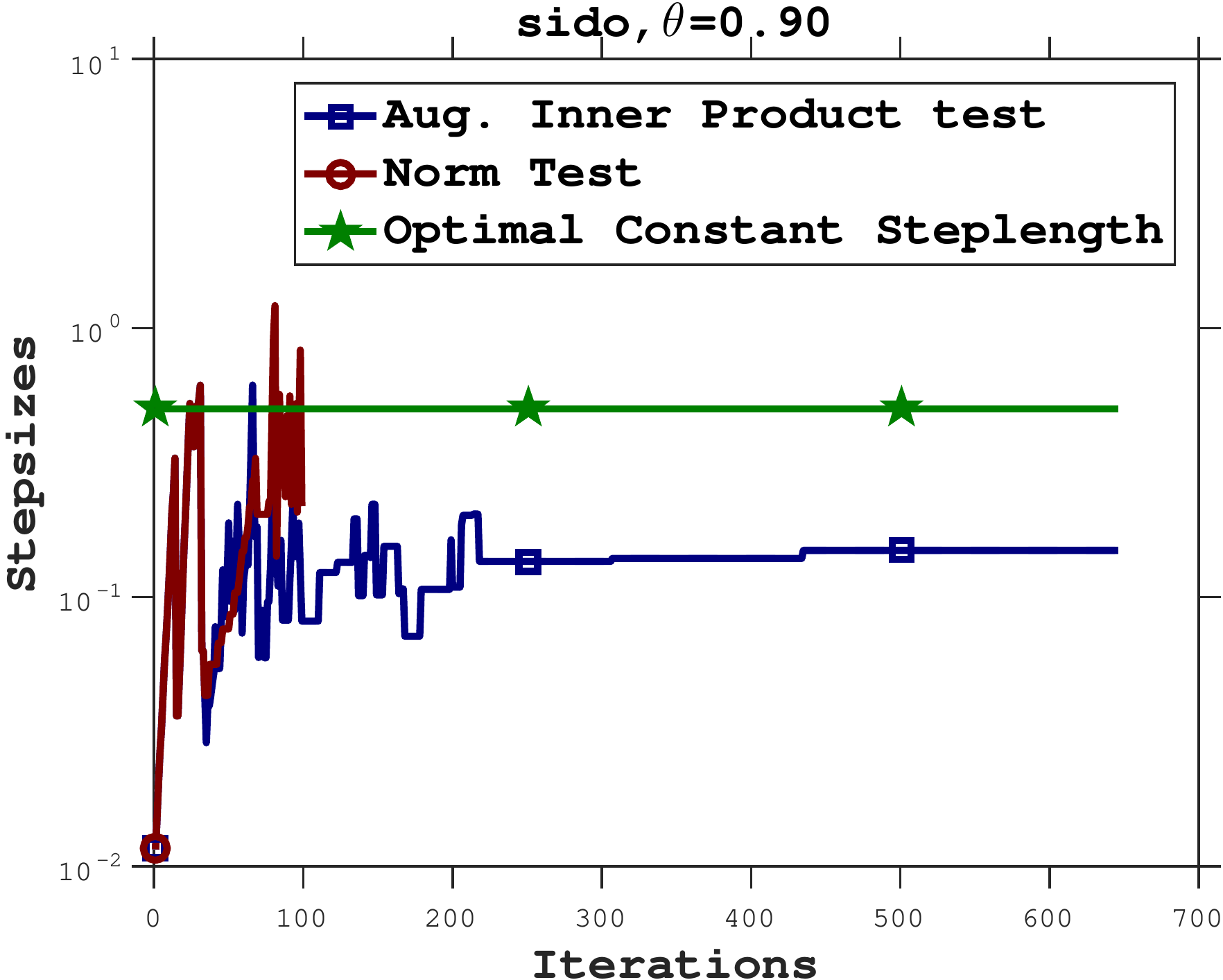}		
		\par\end{centering}
	\caption{ {\tt sido} dataset: Performance of Algorithm~\ref{alg:complete} algorithm using the inner product test and using the norm test. Left: Function error vs. effective gradient evaluations; Middle: Batch size $|S_k|$ vs. iterations; Right: Stepsize vs. iterations.} 
	\label{sido-expmnt2} 
\end{figure}
\begin{figure}[H]
	\begin{centering}
		\includegraphics[width=0.3\linewidth]{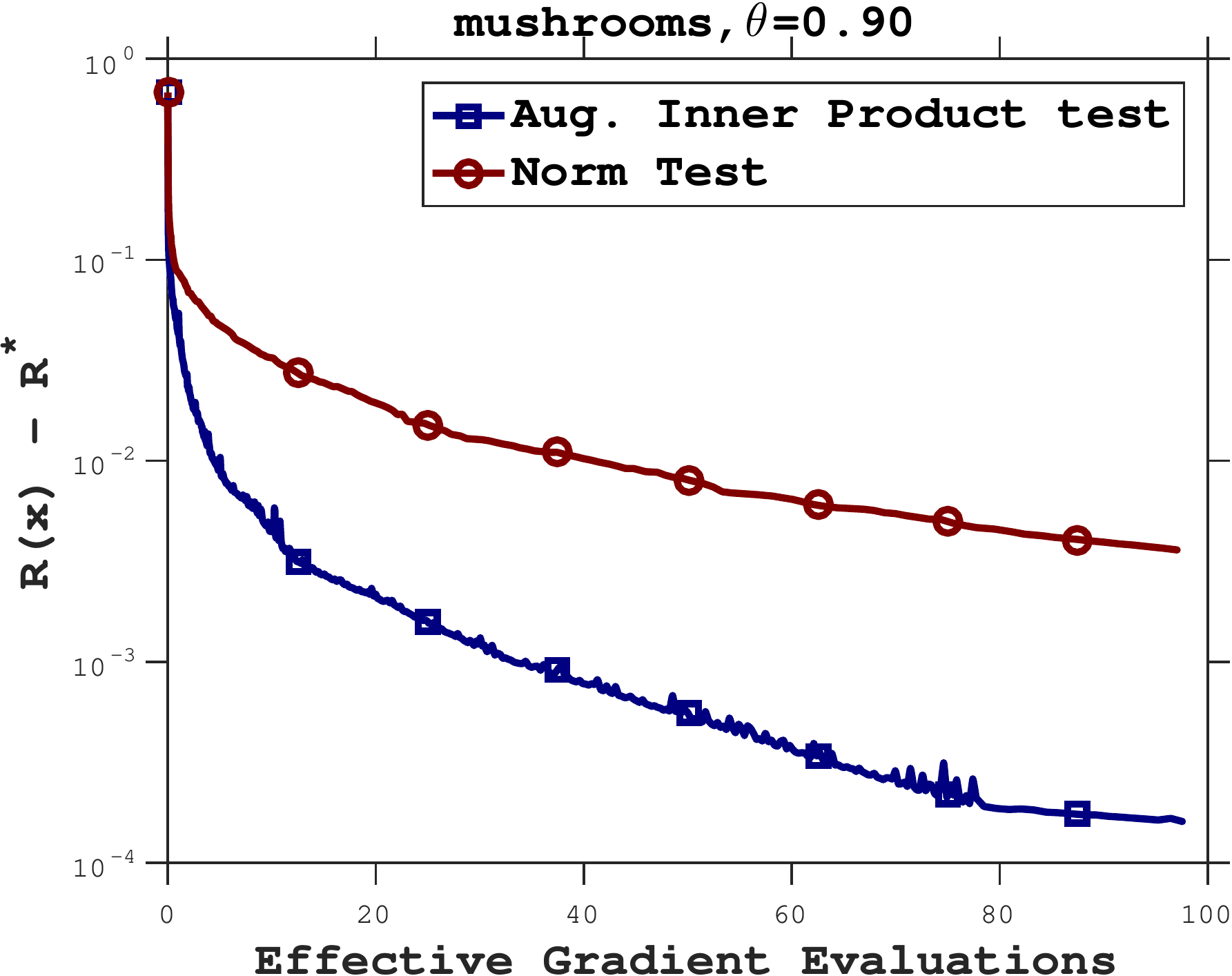}
		\includegraphics[width=0.3\linewidth]{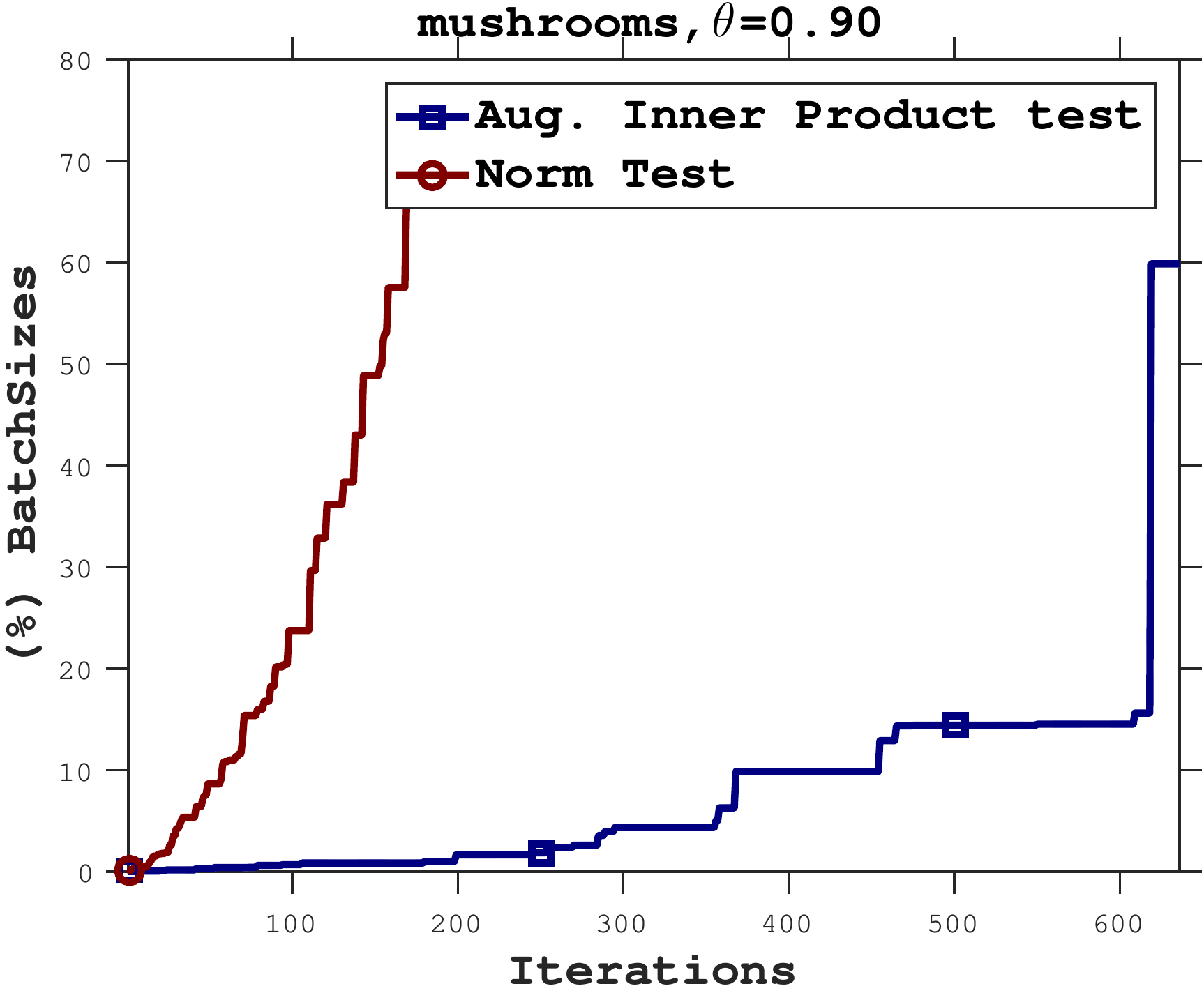}
		\includegraphics[width=0.3\linewidth]{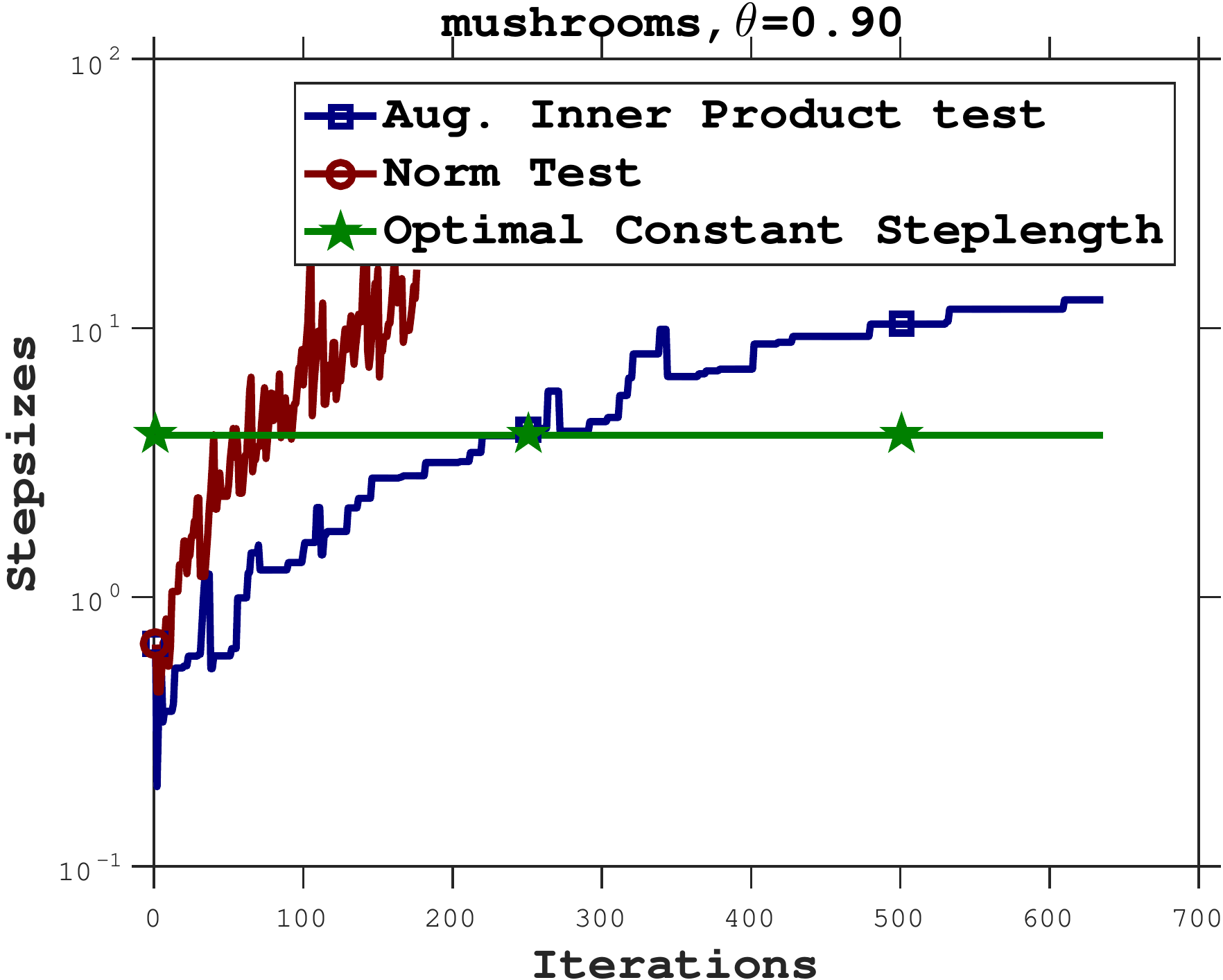}		
		\par\end{centering}
	\caption{ {\tt mushrooms} dataset: Performance of Algorithm~\ref{alg:complete}  using the inner product test and using the norm test.  Left: Function error vs. effective gradient evaluations; Middle: Batch size $|S_k|$ vs. iterations; Right: Stepsize vs. iterations.} 
	\label{mushrooms-expmnt2} 
\end{figure}
\begin{figure}[H]
	\begin{centering}
		\includegraphics[width=0.3\linewidth]{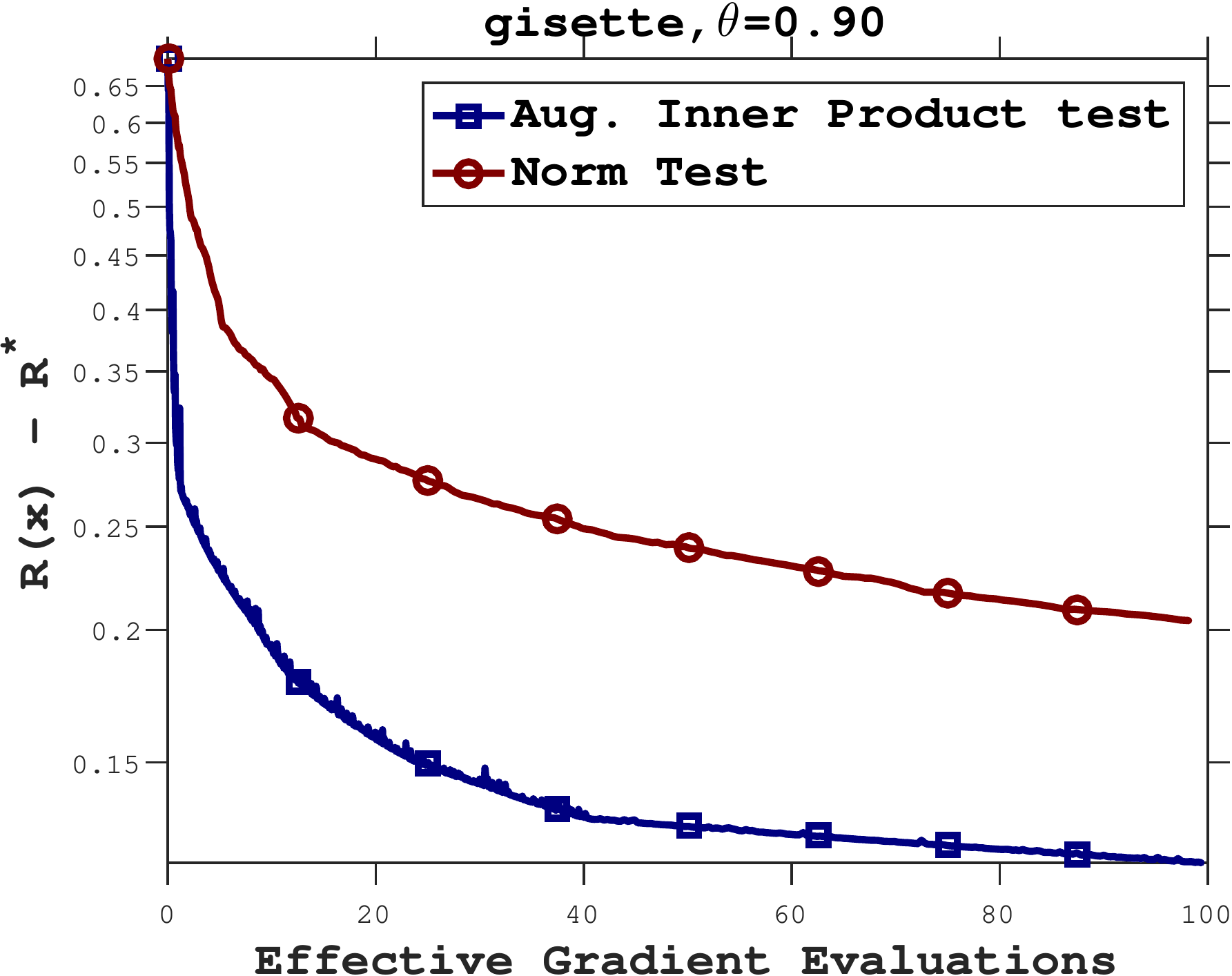}
		\includegraphics[width=0.3\linewidth]{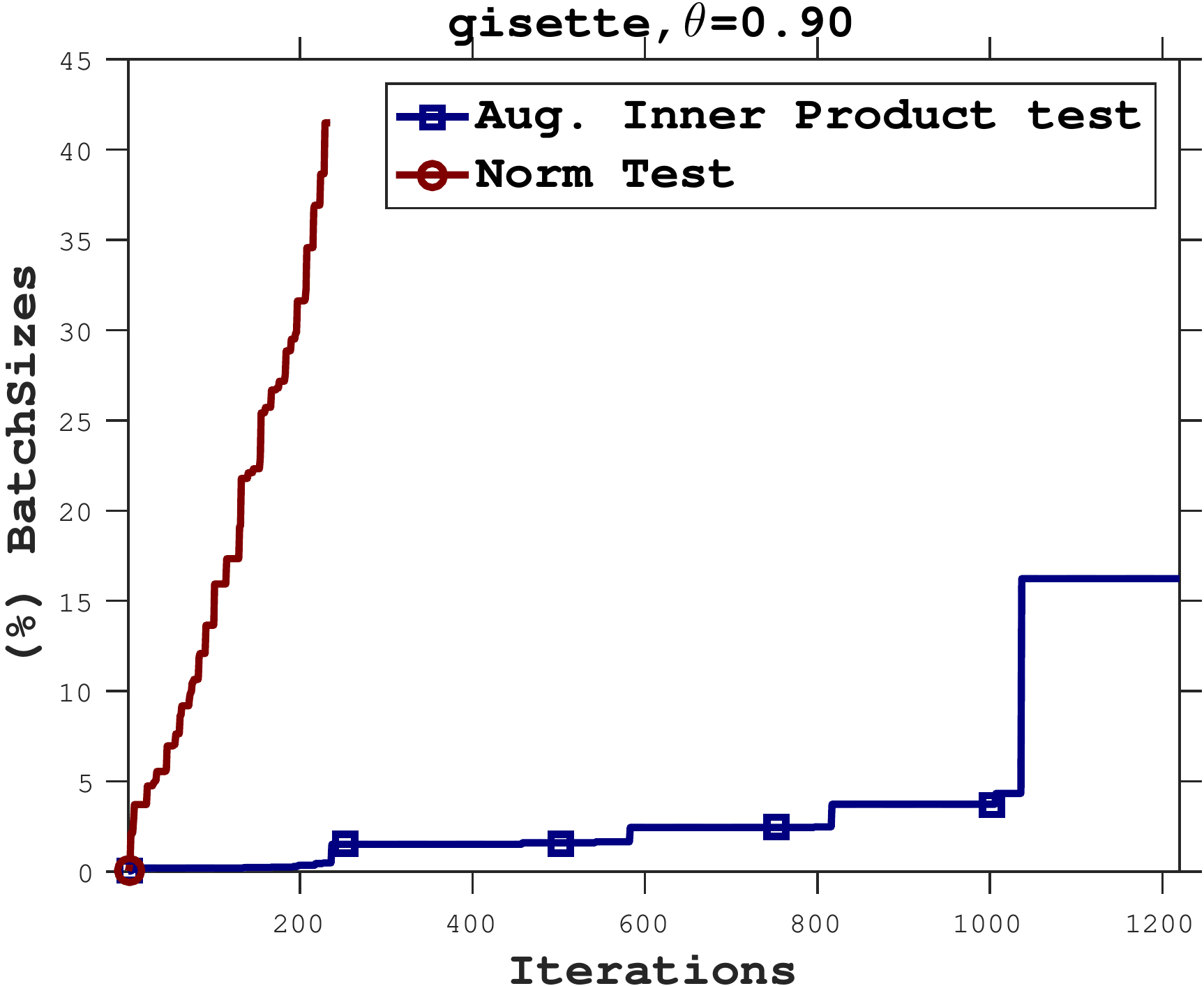}
		\includegraphics[width=0.3\linewidth]{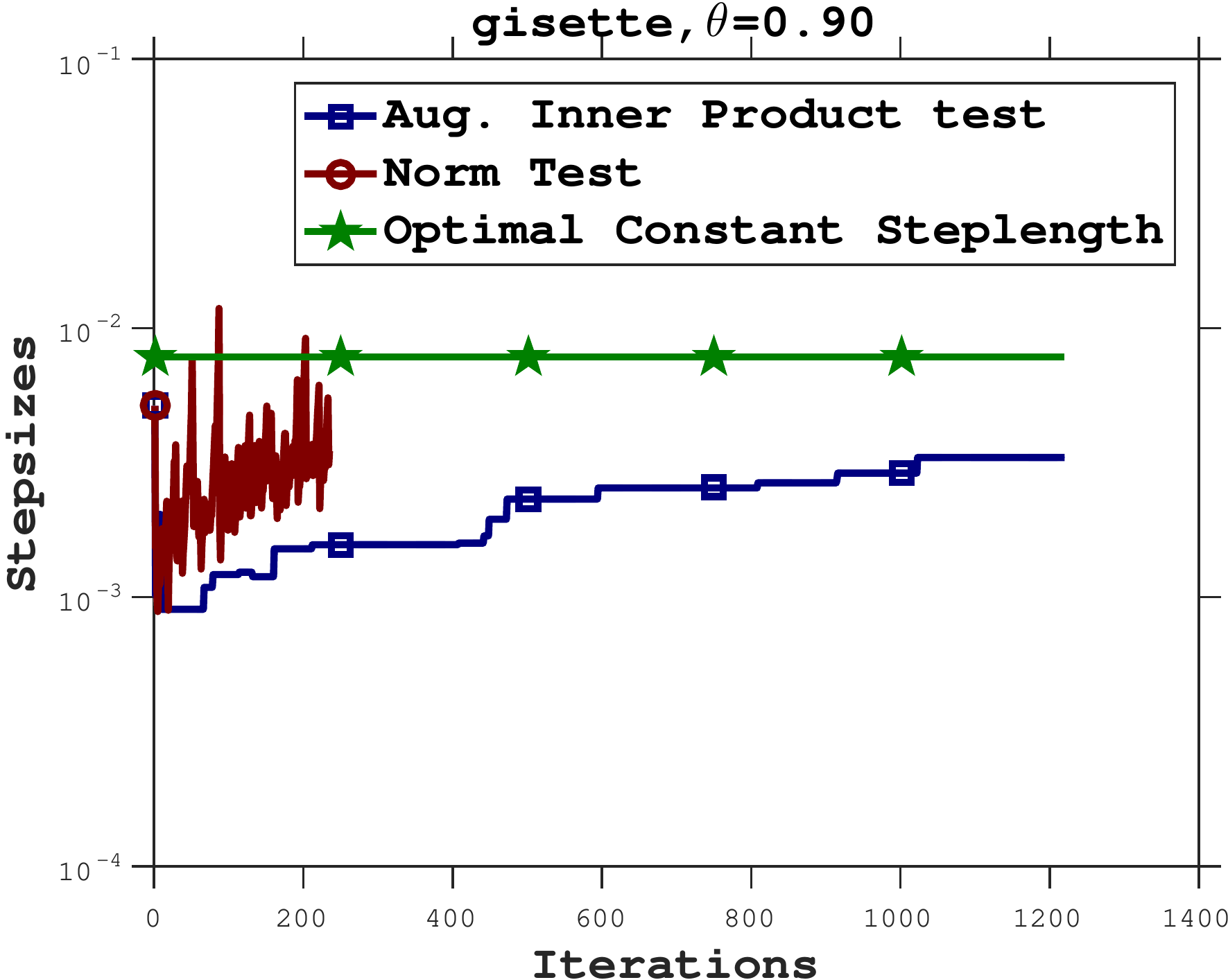}		
		\par\end{centering}
	\caption{ {\tt Gisette} dataset: Performance of Algorithm~\ref{alg:complete} using the inner product test and using the norm test. Left: Function error vs. effective gradient evaluations; Middle: Batch size $|S_k|$ vs. iterations; Right: Stepsize vs. iterations. } 
	\label{Gisette-expmnt2} 
\end{figure}
%

\end{document}